\documentclass{article}
\title{Local well-posedness and global stability of one-dimensional shallow water equations with surface tension and constant contact angle}
\author{Jiaxu Li\footnote{The Institute of Mathematical Sciences,
The Chinese University of Hong Kong,
Shatin, N.T.,
Hong Kong, CN.
\href{mailto:jiaxvlee@gmail.com}{jiaxvlee@gmail.com}}
, Xin Liu\footnote{Department of Mathematics, Texas A\&M University, College Station, TX 77843-3368, USA. \href{mailto:xliu23@tamu.edu}{xliu23@tamu.edu}}
, Dirk Peschka\footnote{Weierstrass Institute,
Mohrenstr. 39, 10117 Berlin, DE.
\href{mailto:dirk.peschka@wias-berlin.de}{dirk.peschka@wias-berlin.de}}
}
\date{\today}

\usepackage{todonotes}

\usepackage{amsmath}
\usepackage{amssymb}
\usepackage{amsthm}
\usepackage[abbrev]{amsrefs}
\usepackage{mathtools}
\usepackage{cases}
\usepackage{hyperref}
\usepackage[scr=boondox,  
            cal=esstix]   
           {mathalpha}
\usepackage{tikz}
\usetikzlibrary{arrows.meta}
\usepackage{cleveref}
\usepackage{graphicx}
\usepackage{enumitem}

\usepackage[font=small,labelfont=bf]{caption}
\allowdisplaybreaks

\renewcommand{\vec}[1]{\mathbf{#1}}
\newcommand{\dv}{\mathrm{div}\,}
\newcommand{\dt}{\partial_t}
\newcommand{\mrm}[1]{\mathrm{#1}}
\newcommand{\mfk}[1]{\mathfrak{#1}}
\newcommand{\idx}{\,\mathrm{d}\xi}
\newcommand{\hs}{h_{\mrm{s}}}
\newcommand{\dx}{\,\mathrm{d}x}
\newcommand{\norm}[2]{\Vert #1 \Vert_{#2}}
\newcommand{\igamma}{\kappa}
\newcommand{\sgamma}{\gamma}

\theoremstyle{definition}

\theoremstyle{plain}
\newtheorem{thm}{Theorem}
\newtheorem{lem}{Lemma}
\theoremstyle{remark}
\newtheorem{rmk}{Remark}

\numberwithin{equation}{section}

\begin{document}

\maketitle
\begin{abstract}
We consider the one-dimensional shallow water problem with capillary surfaces and moving contact {lines}. An energy-based model is derived from the two-dimensional water wave equations, where we explicitly discuss the case of a stationary force balance at a moving contact line and highlight necessary changes to consider dynamic contact angles. The moving contact line becomes our free boundary at the level of shallow water equations, and the depth of the shallow water degenerates near the free boundary, which causes singularities for the derivatives and degeneracy for the viscosity. This is similar to the physical vacuum of compressible flows in the literature. The equilibrium, the global stability of the equilibrium, and the local well-posedness theory are established in this paper. 

\smallskip 

{\par\noindent\bf Keywords:} shallow water equations, thin films, surface tension, contact lines, physical vacuum

\smallskip

{\par\noindent\bf MSC:} 76N10, 
35R35, 
76B15, 
74K35 

\end{abstract}
\section{Introduction}
The shallow water problem is a system of nonlinear partial differential equations that characterizes the motion of thin fluid layers, considering gravitational, viscous, and Coriolis forces. It is commonly employed to model the behavior of surface waves in oceans, lakes, and other geophysical flows and was first derived by Saint-Venant \cite{de1871theorie}.
Here, we consider the one-dimensional shallow water problem for a film height 
$h :=  h(x,t)$ and a vertically averaged horizontal velocity $u := u(x,t)$ in a moving domain $x\in\omega=(a,b)$ with $a=a(t)$ and $b=b(t)$, which move together with the flow. About their three-dimensional variants, we call the points $a,b$ \emph{contact lines}, see \Cref{fig:sketch}. Combining all the unknowns into a state vector $q=(a,b,h,u)$, this system has a Hamiltonian 
\begin{equation}\label{eq:sw-energy}
    \mathscr{H}(q) :=  \int_{a}^{b}  \biggl( \tfrac{1}{2} h\vert u \vert^2 + U(h,\partial_x h)\biggr) \dx,\quad U(h,\xi) := \tfrac{g}{2} h^2 + \tfrac{\sgamma}{2}(\vert \xi \vert^2 +  \alpha^2).
\end{equation}
The first integrand of $\mathscr{H}$ is the kinetic energy, and the internal energy $U$ has contributions from gravity and surface energy. Here, $g,\sgamma,\alpha$ denote the constant of gravity, surface tension, and the contact slope (the tangent of the contact angle).  The height is non-negative and vanishes at the contact lines, i.e. $h(x,t)>0$ for $a(t)<x<b(t)$ and $h\bigl(a(t),t\bigr)=h\bigl(b(t),t\bigr)=0$. The film height is zero outside the domain $\omega(t)$. Alternatively, one can formulate this problem using the momentum $p:=hu$, which vanishes outside the domain $\omega(t)$.
\begin{figure}[hb!]
\centering
\begin{tikzpicture}
\def\fluidHeight#1{1-1/9*(#1-3.5)*(#1-3.5)}
\def\fluidHHeight#1{0.5*(1-1/9*(#1-3.5)*(#1-3.5))}
\def\xx{2.4}
\draw [fill=blue!20,thick] plot[domain=0.5:6.5, smooth] (\x,{\fluidHeight{\x}}) -- (6.5,0) -- (0.5,0) -- cycle;

\draw [-{>[scale=1.3]}] (0,0) -- (0,1.7);
\draw [-{>[scale=1.3]}] (0,0) -- (7,0);
\draw [dotted,white,thick] (0.5,0) -- (6.5,0);

\draw [dashed] (0.5,0) -- (2,1);
\draw [dashed] (1.5,0) arc [start angle=0, end angle=atan(2/3), radius=1];
\node at (1.25,0.25) {$\vartheta$};

\filldraw[black] (0.5,0) circle (1pt);
\filldraw[black] (6.5,0) circle (1pt);
\filldraw[black] (\xx,0) circle (1pt);

\node at (\xx,1.25) {$\epsilon h$};
\node at (0.5,-0.21) {$a$};
\node at (6.5,-0.21) {$b$};
\node at (\xx,-0.21) {$x$};
\node at (-0.3,1.65) {$z$};

\draw [-stealth,thick] (\xx,0) --  (3,0);
\draw [-stealth,thick] (\xx,{\fluidHeight{\xx}}) --  (3,{\fluidHeight{\xx}});
\draw [-stealth,thick] (\xx,{\fluidHHeight{\xx}}) -- node[above] {$u$} (3,{\fluidHHeight{\xx}});
\draw[-stealth,thick,dashed] (\xx,0) -- (\xx,{\fluidHeight{\xx}});
\end{tikzpicture}
\caption{Sketch of fluid film height $h=h(x,t)$, horizontally uniform velocity field $u=u(x,t)$, contact lines $a=a(t)$, $b=b(t)$, contact angle $0\le\vartheta\ll 1$.}
\label{fig:sketch}
\end{figure}

Then, for given initial data $(h,u)(t=0)=(h_0,u_0)$, $a(0)=a_0$, and $b(0)=b_0$, the free boundary problem describing the shallow water evolution $q(t)$ is 
\begin{subequations}\label{eq:sw-system}
\begin{align}
\label{eq:sw_mass}
    \partial_t h + \partial_x (h u) = 0, \\
\label{eq:sw_momentum}
    \partial_t (h{u}) + \partial_x (h {u}^2) + h\partial_x \pi - {4\mu}\partial_x (h \partial_x  u)=0,\\
\label{eq:sw_velocity}
    \dot{a}(t)=u\bigl(a(t),t\bigr)\quad\text{and}\quad
    \dot{b}(t)=u\bigl(b(t),t\bigr),
\end{align}
where $\pi=\partial_h U - {\partial_x (\partial_{\partial_x h} U})=gh-{\sgamma\partial_{xx}h}$ and $\dot{a}:=\tfrac{\rm d}{\mathrm{d}t}a$, $\dot{b}:=\tfrac{\rm d}{\mathrm{d}t}b$. Here $ \mu \in (0,\infty)  $ is a constant. These equations are satisfied by $h,u$ at time $t$ for all $x\in(a,b)$ and ensure conservation of mass \eqref{eq:sw_mass} and conservation of momentum \eqref{eq:sw_momentum}. At the contact lines $a(t) < b(t)$, we have the following kinematic and constant contact angle boundary conditions
\begin{align}
\label{eq:sw_height}
    h\bigl(a(t),t\bigr)=0 \quad&\text{and}\quad h\bigl(b(t),t\bigr)=0,\\
\label{eq:sw_slope}
    \partial_x h\bigl(a(t),t\bigr)=\alpha\quad&\text{and}\quad\partial_x h\bigl(b(t),t\bigr)=-\alpha.
\end{align}
Note that \eqref{eq:sw_height}, upon differentiation with respect to time, is equivalent to kinematic conditions of the form 
\begin{equation}
\tag{\ref*{eq:sw_height}'}
\label{eq:sw_kinematic}
\begin{split}
\partial_t h\bigl(a(t),t\bigr)+\dot{a}(t)\,\partial_x h\bigl(a(t),t\bigr)=0,\\
\partial_t h\bigl(b(t),t\bigr)+\dot{b}(t)\,\partial_x h\bigl(b(t),t\bigr)=0,
\end{split}
\end{equation}
which, together with \eqref{eq:sw_mass} and \eqref{eq:sw_slope}, imply \eqref{eq:sw_velocity}.
\end{subequations}
Later, we will show that solutions of \eqref{eq:sw-system} with Hamiltonian \eqref{eq:sw-energy} satisfy an energy-dissipation balance
\begin{equation}
\label{eq:SW-E-static}
\tfrac{\rm d}{\mathrm{d}t}\mathscr{H}\bigl(q(t)\bigr) = -\int_a^b {4 \mu}  h\vert\partial_x u\vert^2\dx\le 0,
\end{equation}
and therefore obtain consistency with the second law of thermodynamics. With \eqref{eq:sw_velocity} we ensure the conservation of mass at the contact lines. The constant contact angle in \eqref{eq:sw_slope} emerges from a stationary force balance when taking the derivative of the Hamiltonian  $\mathscr{H}$. 

\smallskip

A rigorous derivation of viscous shallow water equations without surface tension can be found in \cite{bresch_mathematical_2011}. Formal derivations of shallow water equations including surface tension based on asymptotic expansions can be found in \cites{erneux1993nonlinear,oron1997long,vaynblat2001rupture,MARCHE200749,munch2005lubrication}.
However, even without surface tension it was realized already by Lynch and Gray \cite{lynch1980finite} that the shallow water problem is a free boundary problem where \emph{wet} regions $\{x:h(x,t)>0\}$ can advance into or recede from \emph{dry} regions $\{x:h(x,t)=0\}$. This has led to the development of more complex numerical methods to treat the corresponding free boundary problem, cf. \cite{bates1999new} and references therein. The class of methods for the free boundary shallow water problem is mainly divided into Lagrangian and Eulerian methods: In the Lagrangian approach, the free boundary problem is mapped to a fixed domain and solved there \cite{akanbi1988model}. Such approaches result in very precise but highly nonlinear partial differential equations, but can be difficult to solve in higher dimensions and for topological transitions. Eulerian methods solve the shallow water problem on a fixed domain and then try to maintain good properties using specialized techniques \cite{monthe1999positivity}, e.g. non-negativity of the height or density. Length scales $L$ of most geophysical problems are way above typical capillary length $\lambda=\sqrt{\sgamma/g}$ and thus surface tension can be neglected. Alternatively, the contact angle can be treated by regularizing the surface energy with a wetting potential \cites{munch2005lubrication,peschka2010self,lallement2017shallow}, which avoids topological transitions and maintains the global positivity of solutions.

However, for microfluidic wetting and dewetting problems surface tension plays a vital role \cites{de1985wetting,bonn2009wetting} but the considered fluids are often very viscous. 
Any viscous hydrodynamic model needs to address the so-called ``no-slip paradox'' discovered by Huh \& Scriven and Dussan \& Davis \cites{huh1971hydrodynamic,dussan1974on} for example by modification of the no-slip boundary condition with an appropriate Navier-slip or free-slip boundary condition.
Corresponding formal asymptotic techniques result in \emph{thin-film models} \cites{oron1997long,bonn2009wetting} of the form
\begin{equation}\label{eq:thinfilm}
    \partial_t h - \partial_x\bigl[ m(h) \, \partial_x(-\sgamma\partial_{xx} h + \partial_h U_\mathrm{int})\bigr] = 0, \\
\end{equation}
where $U_\mathrm{int}=U_\mathrm{int}(h)$ is an intermolecular potential with a similar role as $U$ in \eqref{eq:sw-energy} and $m(h)=\tfrac13 \vert h\vert^3 + bh^2$ is a degenerate mobility with Navier-slip length $b$.
The thin-film free boundary problem with moving contact lines is well-understood mathematically \cites{bertozzi1998mathematics,giacomelli2003rigorous,bertsch2005thin,knupfer2011well,ghosh2022revisiting}, in particular regarding the (lack of) regularity near a moving contact line, e.g. cf. \cites{gnann2016regularity,giacomelli2013regularity,flitton2004surface}. Numerical algorithms with stationary and dynamic force balance at a moving contact line have been investigated in dimension $d=1$ \cites{peschka2015thin,peschka_variational_2018} and $d=2$ \cite{peschka2022model} based on energy-variational arguments. In particular, the importance of dynamic contact angles \cite{snoeijer2013moving} based on microscopic arguments and formulated in a thermodynamic framework should be emphasized \cites{shikhmurzaev1997moving,ren_boundary_2007}.

Without surface tension, the shallow water problem can be seen as an important special case of the compressible isentropic Navier-Stokes equations (viscous Saint-Venant system). Here, the height is replaced by the density $h \equiv \rho$ and one assumes a density-dependent viscosity coefficient, such that%
\begin{subequations}\label{ddns}
\begin{align}
    \partial_t \rho + \partial_x (\rho  u) = 0, \\
    \partial_t (\rho u) + \partial_x (\rho u^2) + \partial_x P
    - 2 \partial_x \bigl( \mu(\rho) \partial_x  u\bigr) =0 ,
\end{align}
\end{subequations}
with $P=\rho^2$ and $\mu (\rho)=\mu \rho$. 
For the general isentropic Navier-Stokes equations one considers
$P=\rho^\igamma$ and $\mu(\rho)=\rho^\alpha$ with the adiabatic index $\igamma>1$ and $\alpha\geq 0$.

Dry regions $\{x:h(x,t)=0\}$ in the shallow water equations correspond to \emph{vacuum} $\{x:\rho(x,t)=0\}$ in the compressible Navier-Stokes system.
There is a large amount of literature about the long-time existence and asymptotic behavior of solutions to the system \eqref{ddns} in the case $\mu(\rho)$ is constant $(\alpha=0)$. 
When the initial density is strictly away from the vacuum, see 
Kazhikhov \cite{KAZHIKHOV1977273} and Hoff 
 \cite{hoff1987global} for strong solutions, Hoff and Smoller \cite{hoff2001non} for weak solutions. When the initial density contains a vacuum,
this leads to some
singular behaviors of solutions, such as the
failure of continuous dependence of weak solutions on initial data \cite{hoff1991failure} and the finite time blow-up of smooth solutions \cites{xin1998blowup,jiu2015remarks}, and even non-existence
of classical solutions with finite energy \cite{li2019non}.

Therefore, one may consider density-dependent viscosity case $(\alpha>0)$. It is reasonable for compressible Navier-Stokes equations, see Liu-Xin-Yang \cite{liu1997vacuum}, a viscous Saint-Venant system for the shallow waters, see Gerbeau-Perthame \cite{gerbeau2000derivation}, and
some geophysical flows, see \cites{bresch2003some,bresch2003existence,bresch2006construction}. In particular, Didier-Beno{\^\i}t-Lin studied a compressible fluid model of Korteweg type in \cite{bresch2003some}:
\begin{subequations}
\begin{align}
 	\rho_t+\mathrm{div} (\rho u)=0,\\
 	(\rho u)_t+\mathrm{div}(\rho u\otimes u)=\dv(-P\mathbb{I}+2\mu\rho\mathbb{D}u)+\sgamma \rho\nabla\Delta \rho,
\end{align}  
\end{subequations} 
see also Danchin-Desjardins \cite{DANCHIN200197}, Hao-Hsiao-Li \cite{hao2009cauchy}, Germain-LeFloch \cite{germain2016finite}. 

The vacuum-free boundary problem of \eqref{ddns} has attracted a vast of attractions in recent years. In the case that the viscosity is constant,
Luo-Xin-Yang \cite{luo2000interface} studied the global regularity and behavior of the weak
solutions near the interface when the initial density connects to vacuum states in a very smooth manner. Zeng \cite{zeng2015global} showed the global existence
of smooth solutions for which the smoothness extends all the way to the boundary. In the case that the viscosity is density-dependent, the global existence of weak solutions was studied by many authors, see \cite{yang2002compressible} without external force, and
\cites{duan2011dynamics,zhang2006global,okada2004free} with external force and the references therein.
By taking the effect of external force into account, Ou-Zeng \cite{ou2015global} obtained the global well-posedness of strong solutions and the global regularity uniformly up to the vacuum boundary. When the viscosity coefficient vanishes at vacuum, Li-Wang-Xin \cite{li_well-posedness_2022} first establishes the local well-posedness of classic solutions of system \eqref{ddns} without surface tension.

\smallskip 

In this paper, we provide the ingredients to combine well-established models for moving contact lines that are valid on microscopic length scales with the shallow water problem on intermediate scales, where the capillary length is still relevant. 
%
We develop a theory to describe phenomena that combine capillarity, moving contact lines, and inertia. The major difficulty lies in the moving boundary and the degeneracy near the vacuum. We first investigate the stability of the stationary equilibrium. In particular, we analyze the linearized system \eqref{lgeq:lnr_sys} and find the key { energy functional} \eqref{lnest:014}, in which the concavity of the equilibrium plays an important role. Then we move on to investigate the nonlinear stability theory, showing that the weighted, degenerate energy functional is strong enough to control the nonlinearities globally in time, thanks to Hardy's inequality. Finally, we sketch the local well-posedness theory for general initial data.

This paper is organized as follows: In \Cref{sec:derivation_sw}, we give a brief derivation of system \eqref{eq:sw-system} from two-dimensional viscous water wave equations and summarize our main results in \Cref{sec:sw_and_main}. We recall some weighted embedding inequalities in \Cref{sec:preliminaries}. In \Cref{sec:lagrangian}, we reformulate the free
boundary problem \eqref{eq:sw-system} in the Lagrangian coordinates and state the main results of this paper. Then we present the linear stability and nonlinear stability in \Cref{sec:linear_stability,sec:nonlinear_estimates,sec:nonlinear_apriori} and therefore finish the proof of asymptotic stability of the stationary equilibrium. \Cref{sec:local_well_posedness} is devoted to the local well-posedness theory for general initial data.

\section{Shallow water equations with surface tension}\label{sec:derivation_sw}
\subsection{Shallow water approximation}

In the following, we will provide a systematic derivation of the one-dimensional shallow water equations with surface tension and moving contact lines from the two-dimensional water wave equations, which we state below. 
For the water wave model we follow the mathematical models presented and analyzed, for example, in \cites{ren_boundary_2007,guo2023stability}.

\smallskip 

Let $ 0 <  \epsilon \ll 1 $ be the asymptotically small thickness of the liquid film, $ \vec u:= (u,w)(x,z,t) $ be the two-dimensional velocity field, $ p:= p(x,z,t) $ be the pressure potential, and $ \epsilon h:= \epsilon h(x,t) $ be the height of the liquid film. Using the height, similar to \Cref{fig:sketch}, we define the time-dependent domain 
\begin{align*}
\Omega_\epsilon(t): = \lbrace (x,z): x \in \omega(t), 0<z<\epsilon h(x,t) \rbrace  
\quad\text{for}\quad \omega(t):=\bigl(a(t),b(t)\bigr).
\end{align*}
 Additionally, we define the two free boundaries 
\begin{align*}
 &\Gamma_h(t):=\{(x,\epsilon h(x,t)):a(t)<x<b(t)\}\subset\Gamma(t)\qquad\text{and}\\ &\Gamma_0(t):=\{(x,0):a(t)<x<b(t)\}\subset\Gamma(t),
\end{align*}
 i.e. the top and bottom part of $\Gamma(t)=\partial\Omega_\epsilon(t)$. The outer normal on $\Gamma_h(t)$ is 
\begin{align*}
    \mathbf{n}_\epsilon(x,z,t):= \frac{1}{(1+(\epsilon\partial_x h)^2)^{1/2}}\begin{pmatrix}-\epsilon \partial_x h (x,t) \\ 1 \end{pmatrix}.
\end{align*}

With these definitions for a shallow domain, the viscous water wave equations can be written as
\begin{subequations}\label{sys:water-waves}
\begin{align}
        \dt \vec u + \vec u \cdot\nabla \vec u + \dv\!\bigl( p\mathbb{I}_2 - \mu (\nabla \vec u + \nabla \vec u^\top)\bigr)& = - g_\epsilon \vec e_z  && \text{in} \quad \Omega_\epsilon(t), \label{eq:water-waves} \\
        \dv \vec u = \partial_x u + \partial_z w & = 0 && \text{in} \quad\Omega_\epsilon(t),
        \label{eq:ww-dv-free}
\end{align}
with kinematic conditions for the evolution of the boundaries $\Gamma_h(t),\Gamma_0(t)$ 
\begin{align}
        &\epsilon (\dt h + u\vert_{z=\epsilon h}   \partial_x h) - w\vert_{z=\epsilon h} = 0,
        \label{eq:ww-bdry}\\
        &\dot{a}(t)=u(a(t),0,t), \quad \dot{b}(t)=u(b(t),0,t).\label{eq:ww-contl}
\end{align}
The stress boundary condition on the moving boundary $\Gamma_h(t)$ is 
\begin{align}
    \left( p \mathbb I_2 - \mu (\nabla \vec u + \nabla \vec u^\top) \right)\mathbf{n}_\epsilon = - \sgamma_\epsilon \partial_x \biggl(\dfrac{\epsilon \partial_x h}{\sqrt{1+ \epsilon^2 \vert \partial_x h \vert^2}} \biggr) \mathbf{n}_\epsilon. \label{bc:moving-b-s-b}
\end{align}
On the bottom boundary $\Gamma_0(t)$ we have an impermeability boundary condition and a free slip boundary condition
\begin{align}
\label{eq:bc-impermeability}w&=0\qquad\text{on }\quad\Gamma_0(t),    \\
\label{eq:bc-freeslip}\partial_z u &= 0\qquad\text{on }\quad\Gamma_0(t).
\end{align}
Instead of free slip \eqref{eq:bc-freeslip}, also a Navier-slip condition $u - \mathscr{b}_\epsilon \partial_z u = 0$ with slip length $\mathscr{b}_\epsilon\ge 0$ in the sense of \cites{munch2005lubrication,bresch_mathematical_2011} are possible boundary conditions on $\Gamma_0(t)$. However, a no-slip boundary condition $u=0$ would be infeasible on $\Gamma_0(t)$ since this generates a logarithmic singularity in the energy dissipation.
The final missing condition for the contact angle $0\le \vartheta < \pi/2$ is 
\begin{align}
\label{eq:contact-angle}
\epsilon\,\partial_x h(a(t),t)=\tan\vartheta \quad \text{and} \quad \epsilon\,\partial_x h(b(t),t)=-\tan\vartheta.
\end{align}
\end{subequations}
Note that this system has the Hamiltonian
\begin{align}
\mathcal{H}_\epsilon
:= \int_{\Omega_\epsilon} \biggl( \tfrac{1}{2}|\mathbf{u}|^2 + g_\epsilon z \biggr) \dx\,\mathrm{d}z + \int_{\Gamma_h} \sgamma_\epsilon\left( \sqrt{1+\epsilon^2\vert\partial_x h\vert^2}-\cos\vartheta\right)\!\!\dx,
\end{align}
as a driving energy functional for the evolution with $\tfrac{\mathrm{d}}{\mathrm{d}t}\mathcal{H}_\epsilon\le 0$.
Moreover, in the shallow water regime, we employ a smallness assumption of the contact slope $\alpha$ of the form 
\begin{align}
\label{eq:small_slope}
1-\cos\vartheta = \tfrac{1}{2}\epsilon^2\alpha^2 \ll 1,
\end{align}
for some given $\alpha=\mathcal{O}(1)$. Then one can check that as $ \epsilon \rightarrow 0 $, the leading order of $ \mathcal{H}_\epsilon $ is exactly the Hamiltonian $\mathscr{H}$ of the shallow water system, i.e., \eqref{eq:sw-energy}.
%
%


\smallskip

For convenience, note that \eqref{eq:water-waves}  can be rewritten component-wisely as, 
\begin{subequations}
    \begin{align}
    \label{eq:ww-h}
    \dt u + u \partial_x u + w \partial_z u + \partial_x ( p - 2 \mu \partial_x u) -  \mu \partial_z(\partial_x w + \partial_z u) &= 0, \\
    \label{eq:ww-v}
\dt w + u \partial_x w + w \partial_z w + \partial_z ( p -  2 \mu \partial_z w) - \mu \partial_x (\partial_z u + \partial_x w ) &=- g_\epsilon ,
\end{align}
and at $ z = \epsilon h(x,t) $ \eqref{bc:moving-b-s-b} can be rewritten using the two components
\begin{align}
    (p - 2\mu \partial_x u) (-\epsilon\partial_x h) - \mu (\partial_x w + \partial_z u) &= - \sgamma_\epsilon \partial_x \left(\tfrac{\epsilon \partial_x h}{\sqrt{1+ \epsilon^2 \vert \partial_x h \vert^2}} \right) (- \epsilon \partial_x h ), \label{bc:moving-b-h} \\
    - \mu (\partial_z u + \partial_x w) (-\epsilon \partial_x h) + ( p - 2 \mu \partial_z w) &= - \sgamma_\epsilon \partial_x \left(\tfrac{\epsilon \partial_x h}{\sqrt{1+ \epsilon^2 \vert \partial_x h \vert^2}} \right).\label{bc:moving-b-v}
\end{align}
\end{subequations}
We will use an asymptotic scaling for surface tension $\sgamma_\epsilon$ and gravity $g_\epsilon$ in the shallow water approximation's final stages.
%
%
Guided by \cite{pedlosky_geophysical_1987}, we derive shallow water equations with surface tension and moving contact lines from \eqref{sys:water-waves}.\\[-0.8em]

{\par\noindent\bf Step 1: Re-scaling the vertical variables.} Owing to the vertical scale of the shallow domain, it is natural to introduce the changes of variables
\begin{equation}\label{def:change-of-variables-1}
    z: = \epsilon Z \quad \text{and} \quad w := \epsilon W.
\end{equation}
The system \eqref{sys:water-waves} can be recast, using $ (u,W,p) $ in the $(x,Z,t)$-coordinates 
in the rescaled fluid domain $ \Omega(t) := \lbrace (x,Z): x\in \omega(t), 0<Z<h(x,t) \rbrace $.
\begin{subequations}\label{sys:ww-shallow-domain}
Equations \eqref{eq:water-waves}--\eqref{eq:ww-dv-free} transform into
\begin{gather}
    \label{eq:ww-h-1}
    \dt u + u \partial_x u + W \partial_Z u + \partial_x ( p - 2 \mu \partial_x u) - \tfrac{1}{\epsilon} \mu \partial_Z (\epsilon \partial_x W + \tfrac{1}{\epsilon}\partial_Z u)  = 0, \\
    \label{eq:ww-v-1}
    \epsilon^2 (\dt W + u \partial_x W + W \partial_Z W) + \partial_Z ( p - 2 \mu \partial_Z W) - \mu \partial_x (\partial_{Z} u + \epsilon^2 \partial_x W ) =- \epsilon g_\epsilon ,\\
    \label{eq:ww-dv-free-1}
    \partial_x u + \partial_Z W  = 0,
\end{gather}
and the kinematic condition becomes
\begin{gather}
    \label{eq:ww-bdry-1}
    \dt h + u\vert_{Z=h} \partial_x h - W \vert_{Z=h}  = 0.
\end{gather}
Meanwhile, boundary conditions 
(\ref{bc:moving-b-s-b},
\ref{eq:bc-impermeability},
\ref{eq:bc-freeslip})
can be rewritten as
\begin{gather}
    \begin{gathered} 
    (p - 2\mu \partial_x u) (-\epsilon\partial_x h) - \mu (\epsilon \partial_x W + \dfrac{1}{\epsilon}\partial_Z u)\\
    = - \sgamma_\epsilon \partial_x \biggl(\dfrac{\epsilon \partial_x h}{\sqrt{1+ \epsilon^2 \vert \partial_x h \vert^2}} \biggr) (- \epsilon \partial_x h ) 
    \qquad \text{on} \quad \lbrace (x,Z= h(x,t)) \rbrace, 
    \end{gathered} \label{bc:moving-b-h-1} \\
    \begin{gathered} - \mu (\dfrac{1}{\epsilon}\partial_Z u + \epsilon \partial_x W) (-\epsilon \partial_x h) + ( p - 2\mu  \partial_Z W) 
    = - \sgamma_\epsilon \partial_x \biggl(\dfrac{\epsilon \partial_x h}{\sqrt{1+ \epsilon^2 \vert \partial_x h \vert^2}} \biggr)\\
    \qquad\qquad\qquad \text{on} \quad \lbrace (x,Z= h(x,t)) \rbrace,
    \end{gathered}  \label{bc:moving-b-v-1}\\
    W\vert_{Z=0}=0, \quad \partial_Z u\vert_{Z=0}=0. \label{bc:fixed-b-1}
\end{gather}
\end{subequations}

In the following, the barotropic component (vertical average) of an arbitrary function $ f(x,Z,t) $ is defined by
\begin{equation}\label{def:barotropic-cpn}
    \overline f(x,t) := \dfrac{1}{h} \int_0^{h(x,t)} f(x,Z,t)\,\mathrm{d}Z.
\end{equation}
This definition allows for trivial identities of the form
\begin{align}
\label{eq:identity_barotropic}
\partial_t (h\bar{f}) = (\partial_t h)f\vert_{Z=h} + h\overline{\partial_t f}\quad\text{and}\quad
\partial_x (h\bar{f}) = (\partial_x h)f\vert_{Z=h} + h\overline{\partial_x f}.
\end{align}
Furthermore, for $f=u$ we have an exact continuity equation
\begin{align}
\label{sw-001}
\partial_t h + \partial_x (h\overline{u})=0,
\end{align}
for any solution of \eqref{sys:water-waves}. This follows from the short computation
\begin{align*}
\partial_x (h\overline{u})&\overset{\eqref{def:barotropic-cpn}\,}{=}\partial_x \int_0^h u(x,Z,t)\mathrm{d}Z=(\partial_x h) u\vert_{Z=h} + \int_0^h \partial_x u(x,Z,t)\mathrm{d}Z\\
&\overset{\eqref{eq:ww-dv-free-1}}{=}(\partial_x h) u\vert_{Z=h} - \int_0^h \partial_Z W(x,Z,t)\mathrm{d}Z
\overset{(\ref{eq:bc-impermeability},\ref{eq:ww-bdry-1})}{=}  -\partial_t h.
\end{align*}
{\par\noindent\bf Step 2: Multiscale analysis.} 
The leading $ \epsilon^{-2} $-order of \eqref{eq:ww-h-1} implies 
\begin{equation}
\label{mltscl-002}
\partial_{ZZ} u=\mathcal{O}(\epsilon^2),
\end{equation} and integrating that in $Z$ using \eqref{bc:fixed-b-1} gives $\partial_Z u=\mathcal{O}(\epsilon^2)$. Integrating \eqref{eq:ww-v-1} in $Z$ and using \eqref{bc:moving-b-v-1} yields that at the leading order
\begin{equation}\label{mltscl-001}
    \begin{split}
    &\bigl(p - 2 \mu \partial_Z W - \mu \partial_x u\bigr)(x,Z,t) = -\epsilon g_\epsilon Z + C(x,t) + \mathcal O(\epsilon^2),\\
        &\quad\text{where}\quad C=\epsilon g_\epsilon h - \mu \partial_x u\vert_{Z=h}-\epsilon \sgamma_\epsilon \partial_{xx}h,
    \end{split}
\end{equation}
at $(x,Z,t)$.
Therefore, integrating \eqref{mltscl-001} in $ Z $ from $ 0 $ to $ h(x,t) $ yields
\begin{equation}\label{mltscl-003}
    \begin{gathered}
    h \overline{p} + \mu \partial_x (h \overline u) - \mu u\vert_{Z=h} \partial_x h = - \epsilon \sgamma_\epsilon h \partial_{xx}h -  \mu h \partial_x u\vert_{Z=h}\\
    + \tfrac{\epsilon}{2} g_\epsilon h^2 + \mathcal O(\epsilon^2),
    \end{gathered}
\end{equation}
where we have used \eqref{eq:ww-dv-free-1} and \eqref{eq:ww-bdry-1}.
Using \eqref{eq:ww-dv-free-1} we can rewrite \eqref{eq:ww-h-1} as
\begin{equation*}
    \dt u + \partial_x (u^2) + \partial_Z (W  u) + \partial_x ( p - 2\mu \partial_x u)
    - \epsilon^{-2}\mu \partial_{ZZ} u - \mu \partial_{Zx}W  = 0,
\end{equation*}
%
where integration in $ Z $ from $ 0 $ to $ h$ using \eqref{eq:ww-bdry-1}, \eqref{bc:fixed-b-1}, and \eqref{bc:moving-b-h-1} yields
\begin{align}
\nonumber        \dt (h\overline{u}) + \partial_x (h \overline{u^2}) + \partial_x(h \overline p - 2 \mu h \overline {\partial_x u}) 
        &= \big[(p - 2\mu \partial_x u) \partial_x h + \mu \partial_x W + \tfrac{1}{\epsilon^2} \mu  \partial_Z u\big]\big\vert_{Z=h} \\ 
\nonumber        &\phantom{=}+
        \big[u\vert_{Z=h} \underbrace{(\dt h + u \partial_x h - W)\vert_{Z=h}}_{=0 \text{ via }\eqref{eq:ww-bdry-1}}\big]\\
\label{mltscl-004} &\!\!\!\overset{\eqref{bc:moving-b-h-1}}{=} - \epsilon \sgamma_\epsilon (\partial_x h)   \partial_{xx}h + \mathcal{O}(\epsilon^2).
\end{align}
Substituting $ h \overline p $ from \eqref{mltscl-003} into \eqref{mltscl-004} yields
    \begin{align}
\nonumber \dt (h\overline{u}) &+ \partial_x (h \overline{u^2}) + \partial_x (\tfrac{\epsilon}{2} g_\epsilon h^2)
         - \epsilon \sgamma_\epsilon h \partial_{xxx} h\\
\nonumber &= \partial_x (2\mu h \overline{\partial_x u} + \mu \partial_x(h\overline u) - \mu u\vert_{Z=h}\partial_x h + \mu  h \partial_x u\vert_{Z=h} ) + \mathcal O(\epsilon^2) \\
    \label{mltscl-005}         &= \partial_x ( 3 \mu  \partial_x(h\overline u) - 3 \mu  u\vert_{Z=h}\partial_x h {+} \mu h \partial_x u\vert_{Z=h} ) + \mathcal O(\epsilon^2),
    \end{align}
where we have used the identity $h \overline{\partial_x u} = \partial_x (h\overline u) - u\vert_{Z=h} \partial_x h$.

{\par\noindent\bf Step 3: Barotropic approximation.} This step will finish the formal derivation of shallow water equations with surface tension. Thanks to \eqref{mltscl-002}, one can derive that
\begin{align*}\label{mltscl-006}
    u(x,Z,t) &= \overline u(x,t) + \int_0^Z \partial_Z u(x,Z',t)\,\mathrm{d}Z' - \dfrac{1}{h} \int_0^h \biggl(\int_0^Z \partial_Z u(x,Z',t)\,\mathrm{d}Z'\biggr)\,\mathrm{d}Z\\
    &= \overline u(x,t) + \mathcal O(\epsilon^2).
\end{align*}
From this approximation we deduce in particular that $u\vert_{Z=h}=\bar{u} + \mathcal O(\epsilon^2)$ and that $\overline{u^2}=\overline{u}^2 + \mathcal O(\epsilon^2)$ and thus \eqref{mltscl-005} yields
\begin{equation}\label{sw-002}
    \begin{gathered}
                 \dt (h\overline{u}) + \partial_x (h \overline{u}^2) + \partial_x (\tfrac{\epsilon}{2} g_\epsilon h^2)
         - \epsilon \sgamma_\epsilon h \partial_{xxx} h = 4 \mu \partial_x ( h \partial_x \overline u) + \mathcal O(\epsilon^2).
    \end{gathered}
\end{equation}

Finally, consider a scaling where $ \epsilon g_\epsilon = g = \mathcal{O}(1) $ and $ \epsilon \sgamma_\epsilon = \sgamma =\mathcal{O}(1) $. Formally passing the limit $ \epsilon \rightarrow 0^+ $ in \eqref{sw-001} and \eqref{sw-002} and renaming $u(x,t):=\bar{u}(x,t)$ leads to the shallow water equations with surface tension
\begin{subequations}\label{sys:sw}
\begin{gather}
    \dt h + \partial_x (h  u) = 0, \label{eq:sw-height}\\
    \label{eq:sw-momentum}
    \dt (h{u}) + \partial_x (h {u}^2) + \partial_x (\tfrac{g}{2} h^2)
     - \sgamma h \partial_{xxx}h  = {4 \mu}\partial_x ( h \partial_x  u).
\end{gather}
to be satisfied for $x\in\omega(t)$, and therefore we have recovered \eqref{eq:sw_mass} and \eqref{eq:sw_momentum}. By expanding \eqref{eq:contact-angle} for small slopes we obtain
\begin{align}
\partial_x h(a(t),t)=\alpha \qquad \text{and}\qquad \partial_x h(b(t),t)=-\alpha,
\end{align}
and the kinematic conditions remain as they were.
\end{subequations}
A rigorous derivation of viscous shallow water equations without surface tension can be found in \cite{bresch_mathematical_2011}. A rigorous derivation of system \eqref{sys:sw} in the case when $ h > 0 $, i.e., the non-degenerate case, follows similarly. 

In this work, our goal is to investigate the case when system \eqref{sys:sw} degenerates. In particular, we focus on the case when $ \omega = \lbrace h > 0 \rbrace = \omega(t) \subset \mathbb R $ is a domain evolving together with the flow, and $ h $ has compact support. At the boundary of the support, these equations are supplemented by the boundary conditions {(\ref{eq:sw_velocity},\ref{eq:sw_height},\ref{eq:sw_slope})}, which were untouched by the shallow water approximation.


\subsection{Conservation laws and contact angle}

We start by deriving a few conservation laws and energy balances for solutions to the shallow water problem.

\bigskip

{\par\noindent\bf Conservation of mass:} Taking the time derivative of the volume of the incompressible fluid, i.e. the integral of $h$ over $ \omega(t) $, yields 
\begin{equation}\label{cnsv:mass}
\dfrac{\mathrm{d}}{\mathrm{d}t} \int_{a(t)}^{b(t)} h \dx \overset{\eqref{eq:sw-height}}{=} - hu\vert_{a}^{b} + h(b) \dot b - h(a) \dot a \overset{\eqref{eq:sw_velocity}}{=} 0. 
\end{equation}

\bigskip

{\par\noindent\bf Balance of momentum:} Assuming constant contact angle, integrating the momentum $p=hu$ and using the divergence form of \eqref{eq:sw-momentum} yields
\begin{equation}\label{cnsv:momentum}
\dfrac{\mathrm{d}}{\mathrm{d}t} \int_{a(t)}^{b(t)} h u \dx = - \int_a^b \sgamma \partial_x h \partial_{xx}h \dx = - \dfrac{\sgamma}{2} \vert \partial_x h \vert^2 \big\vert_a^b = 0.
\end{equation}

\bigskip

{\par\noindent\bf Balance of energy:} Taking the $ L^2 $-inner product of \eqref{eq:sw-momentum} with $ u $ and integrating by part in the resultant lead to, thanks to \eqref{eq:sw-height},
\begin{equation}\label{cnsv:001}
    \dfrac{\mathrm{d}}{\mathrm{d}t} \biggl\lbrace \tfrac{1}{2}\int_{a(t)}^{b(t)}  h \vert u \vert^2 + g \vert h \vert^2  \dx \biggr\rbrace - \sgamma \int_a^b \dt h \partial_{xx} h\dx +  \int_a^b {4 \mu }h \vert \partial_x u \vert^2 \dx = 0.
\end{equation}
Moreover, by applying integration by parts further, one can calculate that
\begin{equation}\label{cnsv:002}
    \begin{split}
        - \sgamma \int_a^b \dt h \partial_{xx} h\dx &= \sgamma \int_a^b \partial_{tx} h \partial_x h \dx - \sgamma (\dt h \partial_x h)\big\vert_a^b \\
        &\!\!\!\overset{}{=} \dfrac{\sgamma}{2} \int_a^b \partial_t\vert\partial_x h\vert^2 \dx + \sgamma (u \vert \partial_x h\vert^2 )\big\vert_a^b \\
        &=  \dfrac{\mathrm{d}}{\mathrm{d}t} \int_a^b \tfrac{\sgamma}{2}\vert \partial_x h \vert^2 \dx + \tfrac{\sgamma}{2} (u \vert \partial_x h \vert^2) \big\vert_a^b,
    \end{split}
\end{equation}
where we have used \eqref{eq:sw_kinematic} and \eqref{eq:sw_velocity}.
%
At this point, we want to explore the impact of using a constant contact angle 
\eqref{eq:sw_slope} 
on the energy balance. For constant contact angle \eqref{eq:sw_slope}, using \eqref{eq:sw_velocity} we find
\begin{equation}\label{cnsv:004}
    (u\vert \partial_x h\vert^2) \big\vert_a^b = (\dot b - \dot a) \alpha^2 = \dfrac{\mathrm{d}}{\mathrm{d}t} \int_a^b  \alpha^2 \dx.
\end{equation}
Next, substituting (\ref{cnsv:002},\ref{cnsv:004}) into \eqref{cnsv:001} leads to
\begin{equation}\label{cnsv:energy}
    \dfrac{\mathrm{d}}{\mathrm{d}t} \biggl[ \dfrac{1}{2}\int_{a}^{b}  h \vert u \vert^2 + g  h^2 + \sgamma (\vert \partial_x h \vert^2 + \alpha^2)  \dx \biggr] = -\int_a^b {4\mu }h \vert \partial_x u \vert^2 \dx.
\end{equation}
This is the energy balance that we stated before in \eqref{eq:SW-E-static}. The static contact angle produces a thermodynamic consistent shallow water model in the sense $\tfrac{\rm d}{\mathrm{d}t}\mathscr{H}\le 0$ with the  Hamiltonian \eqref{eq:sw-energy}.

\bigskip

\begin{rmk}\label{rmk:bc}
    Formally, different equivalent variants of boundary and kinematic conditions at $a,b$ are possible for the water wave equation, i.e. (\ref{eq:sw_slope},\ref{eq:sw_height}) or \eqref{eq:sw_kinematic} or combinations thereof. Similar arguments as in \cite{li_well-posedness_2022}*{remark 2} show that, classical solution to \eqref{sys:sw} with moving boundary  satisfy 
    \begin{equation}\label{cnsv:005}
        \partial_x u\vert_{x=a,b} = 0.
    \end{equation}
    For $s\in\{a,b\}$, taking the time-derivative of the slope at $x=s$ gives
    \begin{align}
            \nonumber\dfrac{\mathrm{d}}{\mathrm{d}t}\partial_x h(s(t),t) \big\vert_{s = a,b} &= (\dt \partial_x h + \dot{s}\partial_x (\partial_x h))\big\vert_{s}
            \overset{\eqref{eq:sw-height}}{=} (- \partial_{x}^2 (uh) + u\partial_{xx}h ) \big\vert_{s}\\&=\big(-2\partial_x h\partial_x u\big)_{s} \overset{\eqref{cnsv:005}}{=} 0.
    \end{align}
    and therefore imposing \eqref{cnsv:005} implies that the contact angle does not change from its initial value. While the rest of the manuscript centers on a constant contact angle and free slip, we now briefly discuss a more general contact line model and the impact of a finite Navier slip length $\mathscr{b}_\epsilon$.
\end{rmk}

\subsection{Dynamic contact angle and Navier slip}
Models with dynamic contact angle use a dynamic stress balance at the contact line, a well-established concept in hydrodynamic models using variational arguments and dissipative processes.
%
A general thermodynamic consistent model for a dynamic contact angle in the spirit of \cites{ren_boundary_2007,peschka_variational_2018} but applied to the shallow water problem replaces \eqref{eq:sw_slope} by%
\begin{equation}
\label{eq:sw_dynamic}
    \nu\dot{a}(t)=\bigl[\alpha^2-\bigl(\partial_x h\bigl(a(t),t\bigr) \bigr)^2\bigr]\quad\text{and}\quad
    \nu\dot{b}(t)=-\bigl[\alpha^2-\bigl(\partial_x h\bigl(b(t),t\bigr)\bigr)^2\bigr].
\end{equation}
For example, in \cite{munch2005lubrication} it is shown that a scale $\mathscr{b}_\epsilon \epsilon^2 = \mathscr{b}=\mathcal{O}(1)$ leads to a modified momentum balance, where instead of \eqref{eq:sw_momentum} one has 
\begin{align}
\label{eq:sw_momentum_ns}
    \partial_t (h{u}) + \partial_x (h {u}^2) + h\partial_x \pi - {4\mu}\partial_x (h \partial_x  u)+\mathscr{b}^{-1}u=0,
\end{align}
while all other equations remain as they were. 
Redoing the previous computation for the energy-dissipation balance \eqref{cnsv:001} for \eqref{eq:sw_momentum_ns} gives
\begin{equation}\label{cnsv:001_ns}
    \dfrac{\mathrm{d}}{\mathrm{d}t} \biggl\lbrace \tfrac{1}{2}\int_{a(t)}^{b(t)}  h \vert u \vert^2 + g \vert h \vert^2  \dx \biggr\rbrace =  \int_a^b \sgamma\dt h \partial_{xx} h-{4 \mu }h \vert \partial_x u \vert^2 + \mathscr{b}^{-1} u^2 \dx.
\end{equation}
Similar as in \eqref{cnsv:004} but now with dynamic contact angle \eqref{eq:sw_dynamic} we get
\begin{equation}
\label{cnsv:XXX}
\begin{split}
    (u\vert \partial_x h\vert^2) \big\vert_a^b &= \dot{b} \Big\{(\partial_x h)\big\vert_{x=b}^2 - \alpha^2 + \alpha^2\Big\} - \dot{a} \Big\{(\partial_x h)\big\vert_{x=a}^2 - \alpha^2 + \alpha^2\Big\}\\
    &\!\!\!\!\overset{\eqref{eq:sw_dynamic}}{=}\nu(\dot{a}^2+\dot{b}^2) + (\dot b - \dot a) \alpha^2
    =\nu(\dot{a}^2+\dot{b}^2) + \dfrac{\mathrm{d}}{\mathrm{d}t} \int_a^b  \alpha^2 \dx.
\end{split}
\end{equation}
Substituting (\ref{cnsv:002},\ref{cnsv:XXX}) into \eqref{cnsv:001_ns} leads to the energy-dissipation balance
\begin{align}\label{cnsv:dyn_energy}
\begin{split}
    \tfrac{\mathrm{d}}{\mathrm{d}t} \mathcal{H} = -\mathscr{D}\quad\text{where}\quad 
\mathscr{D}=\int_a^b \big[{4 \mu }h \vert \partial_x u \vert^2 + \mathscr{b}^{-1}u^2\big]\dx +\nu(\dot{a}^2+\dot{b}^2).
\end{split}
\end{align}
By this computation, we have identified two new terms in the dissipation $\mathscr{D}$. Reconsidering the previous conservation of momentum \eqref{cnsv:momentum} we get 

\begin{equation*}
\dfrac{\mathrm{d}}{\mathrm{d}t} \int_{a(t)}^{b(t)} h u \dx = -\int_a^b \mathscr{b}^{-1}u\,\dx - \dfrac{\sgamma}{2} \vert \partial_x h \vert^2 \big\vert_a^b
 = -\int_a^b \mathscr{b}^{-1}u\,\dx -\frac{\sgamma\nu}{2}(\dot{a}+\dot{b}).
\end{equation*}
At least formally, in the limit $\mathscr{b}^{-1},\nu\to 0$ we recover the energy and momentum balance for the constant contact angle, whereas with finite $\nu,\mathscr{b}^{-1}$ the momentum is not conserved due to friction with the supporting interface at $z=0$. The shallow water system with dynamic angle \eqref{eq:sw_dynamic} is thermodynamic consistent in the sense that the Hamiltonian (free energy) decreases \eqref{cnsv:dyn_energy}. While it is an interesting mathematical question to consider the regularity of solutions as $x\to a,b$ in the (singular) limits of vanishing or infinite $\mu,\nu,\mathscr{b}^{-1}$, in the following we set $\nu,\mathscr{b}^{-1}=0$.

\subsection{Equilibrium}

In this subsection, we look for a stationary equilibrium $q(x,t)=q_\mrm{s}(x)$ with $q_\mrm{s}=(a_\mathrm{s},b_\mathrm{s},\hs,u_\mrm{s})$ of \eqref{sys:sw} with constant/static contact angle \eqref{eq:sw_slope}, i.e. $u_\mrm{s}=0$ and $\alpha>0$. That is
\begin{equation}\label{eqeq:001}
    \partial_x(\tfrac{g}{2} \hs^2) - \sgamma \hs \partial_{xxx} \hs=0 \qquad \text{and} \qquad \partial_x \hs(b_\mrm{s}) = - \partial_x \hs(a_\mrm{s}) = - \alpha,
\end{equation}
in $\omega_{\mrm{s}}:= (a_\mrm{s}, b_\mrm{s}) = \lbrace \hs > 0 \rbrace$.
One can explicitly solve for $ \hs $ from \eqref{eqeq:001} 
\begin{equation}
    \hs(x) = \alpha \lambda \left(\dfrac{e^{2R/\lambda} + 1 -(e^{(x+R)/\lambda} + e^{(R-x)/\lambda})}{e^{2R/\lambda} - 1}\right).
\end{equation}
%
where $\lambda=\sqrt{\sgamma/g}$ is the capillary length and $b_\mrm{s} =- a_\mrm{s} = R$ sets the center of mass to the origin. With dynamic contact angle, this solution is the unique long-time limit up to translation and the droplet radius $R$ is determined by the mass of the droplet. For static contact angle, the long-time behavior can be characterized by translation and by allowing arbitrary constant droplet speeds $u(x,t)=\bar{u}_\mrm{s}\in\mathbb{R}$, $h(x,t)=\hs(x-\bar{u}_\mrm{s}t)$, $a(t)=-R+\bar{u}_\mrm{s}t$, $b(t)=R+\bar{u}_\mrm{s}t$. Generic equilibrium solutions for various $R$ are shown in Figure~\ref{fig:droplet}. For $R\ll \lambda$ the droplet shape is parabolic $\hs(x)\approx\tfrac{\alpha R}{2\lambda}\big(1-(\tfrac{x}{R})^2\big)$, whereas for $L\gg\lambda$ it has a  pancake shape $\hs(x) \approx \alpha\lambda$ away from the contact line.
%

\begin{figure}[hb!]
\centering
\includegraphics[width=\textwidth]{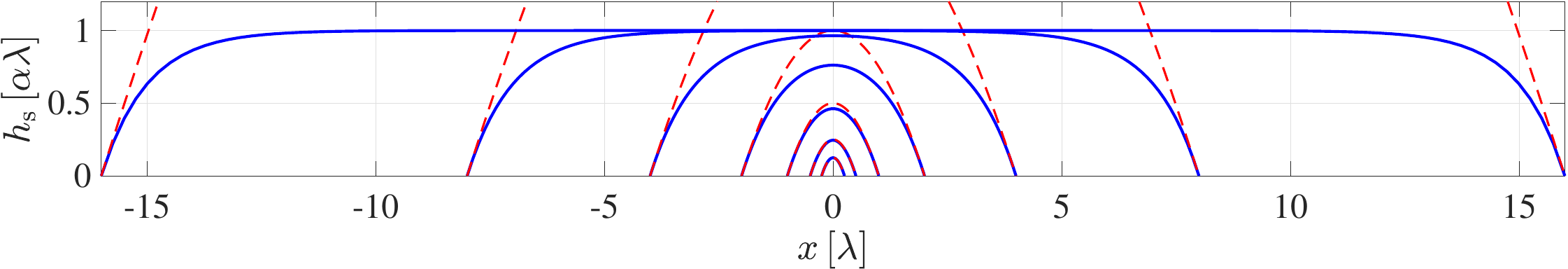}
\caption{Equilibrium solutions $\hs$ for increasing $R/\lambda\in\{\tfrac14,\tfrac12,1,2,4,8,16\}$ (blue lines) and parabolic shapes with same $R/\lambda$ (red dashed lines).}
\label{fig:droplet}
\end{figure}

\subsection{Summary of shallow water equations and main results}\label{sec:sw_and_main}

We summarize the shallow water equations with surface tension in this subsection. To simplify the presentation, we assume that
\begin{equation}\label{def:constants}
        g = 2, ~ \sgamma = 2, ~ \alpha = 1, ~ \mu  = \dfrac{1}{2}.
\end{equation}
Then the shallow water equations \eqref{sys:sw} with surface tension and constant contact angle, together with (\ref{eq:sw_velocity},\ref{eq:sw_height}, \ref{eq:sw_slope}), read
\begin{subequations}\label{sys:sw-st}
\begin{align}
    \dt h + \partial_x (h  u) & = 0, && \text{in} \quad \omega(t), \label{eq:sw-st-height}\\
    \label{eq:sw-st-momentum}
    \dt (h{u}) + \partial_x (h {u}^2) + \partial_x (h^2) - 2 h \partial_{xxx}h & = 2 \partial_x ( h \partial_x  u), && \text{in} \quad \omega(t), \\
    \label{bc:sw-st-cnt-angl}
    \partial_x h\bigl(b(t),t\bigr)= - \partial_x h\bigl(a(t),t\bigr) & = -1, && \text{for} \quad t\in (0,\infty), \\
    \partial_x u \bigl(b(t),t\bigr) = \partial_x u \bigl(a(t),t\bigr) & = 0, && \text{for} \quad t\in (0,\infty), \\ 
    \dot{a} = u \bigl(a(t),t\bigr), \quad \dot b  & = u \bigl(b(t),t\bigr), && \text{for} \quad t\in (0,\infty), 
\end{align}
\end{subequations}
where $ \omega(t) = (a(t),b(t)) $. Alternatively, \eqref{eq:sw-st-momentum} can be written in the conservative form:
\begin{equation}{\tag{\ref*{eq:sw-st-momentum}'}} \label{eq:sw-st-momentum-1}
    \dt (h{u}) + \partial_x (h {u}^2) + \partial_x \bigl\lbrack h^2 -2 h \partial_{xx} h + \vert \partial_x h \vert^2 \bigr\rbrack   = 2 \partial_x ( h \partial_x  u).
\end{equation}
The equilibrium is given by
\begin{equation}\label{eq:equilibrium}
    h_\mrm s(x) =  \dfrac{e^{2} + 1}{e^{2} - 1} -  \dfrac{e^{x+1} + e^{1-x}}{e^{2} - 1} \qquad \forall x \in (-1,1),
\end{equation}
satisfying
\begin{equation}
    \hs''' = \hs'.
\end{equation}
Here we have chosen 
\begin{equation}\label{def:constants-2} a_\mrm s = -1, b_\mrm s = 1. \end{equation}

\smallskip

The main results of this paper can be summarized, in an informal way, as follows:
\begin{thm}[Informal statement of main theorems]
    \begin{enumerate}[label=(\roman*)]
        \item Given general initial data $ (a,b,h,u)\vert_{t=0} = (a_0,b_0,h_0,u_0) $, as long as $ h_0 $ satisfies some convexity condition, there exists a unique local-in-time strong solution to system \eqref{sys:sw-st}. See Theorem \ref{thm-local}, below, in Section \ref{sec:local_well_posedness} for full description
        \item With small enough perturbation, the equilibrium state \eqref{eq:equilibrium} is asymptotically stable. In particular, there exists a global unique strong solution to system \eqref{sys:sw-st} near \eqref{eq:equilibrium}, and the solution converges to the equilibrium state as time goes to infinity. See Theorem \ref{thm-NL}, below, in Section \ref{sec:asymptotic_stability} for a full description. 
    \end{enumerate}
\end{thm}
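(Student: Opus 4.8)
The plan is to treat the two parts separately, since they rest on different mechanisms, but to unify them by first transporting the free-boundary problem \eqref{sys:sw-st} to a fixed reference interval via a Lagrangian change of variables. Writing $x = \eta(\xi,t)$ with $\eta(\cdot,t):[a_\mathrm{s},b_\mathrm{s}]\to[a(t),b(t)]$ the flow map and $J=\partial_\xi\eta$ the Jacobian, the continuity equation \eqref{eq:sw-st-height} integrates exactly to $h(\eta(\xi,t),t)J(\xi,t)=h_0(\xi)$, so that the unknown collapses to the single function $\eta$ (or equivalently the displacement), and the momentum equation \eqref{eq:sw-st-momentum} becomes a quasilinear equation for $\eta$ with a third-order capillary term, a degenerate-parabolic viscosity, and coefficients that are singular where $h_0$ vanishes — this is precisely the physical-vacuum structure flagged in the introduction. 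The contact-angle conditions \eqref{bc:sw-st-cnt-angl} and the kinematic conditions become boundary conditions for $\eta$ on the now-fixed endpoints, and the degeneracy rate near $\xi = a_\mathrm{s}, b_\mathrm{s}$ is governed by $\partial_\xi h_0 \to \mp\alpha \ne 0$, i.e. $h_0$ vanishes linearly, which is the natural weight for Hardy's inequality.

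For part (ii), asymptotic stability, I would linearize the Lagrangian system around $\eta_\mathrm{s}=\mathrm{id}$, i.e. around the equilibrium \eqref{eq:equilibrium} with $u_\mathrm{s}=0$, obtaining the linear system referenced as \eqref{lgeq:lnr_sys}. The heart of the matter is to identify the energy functional \eqref{lnest:014} — a weighted, degenerate quadratic form in which (as the excerpt emphasizes) the concavity $\hs''<0$ of the equilibrium droplet enters with a favorable sign — and to show it is coercive and dissipated. Concretely: differentiate the perturbation equation, pair with the perturbation in the weighted inner product with weight a power of $h_\mathrm{s}$, integrate by parts carefully so that the capillary third-order term produces a positive $\|h_\mathrm{s}^{1/2}\partial_\xi^2(\cdot)\|^2$-type contribution and the viscosity produces the dissipation $\int 2 h_\mathrm{s}|\partial_\xi(\cdot)|^2$, and check that all boundary terms at $\xi=\pm1$ vanish thanks to \eqref{bc:sw-st-cnt-angl} and the induced $\partial_\xi u = 0$ condition. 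Propagating this to higher order (enough $\xi$- and $t$-derivatives to close via Sobolev/Hardy embeddings) yields a differential inequality $\tfrac{\mathrm d}{\mathrm d t}E + D \le C\sqrt{E}\,D$, which for small initial $E$ gives $E(t)\lesssim E(0)$ and $\int_0^\infty D\,\mathrm dt<\infty$, hence $E(t)\to 0$. One then translates back: the droplet support $[a(t),b(t)]$ converges to a translate of $[-R,R]$ and $h\to\hs$ in the strong norm, with the residual translation speed $\bar u_\mathrm{s}$ fixed by the initial momentum \eqref{cnsv:momentum}.

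For part (i), local well-posedness for general (non-equilibrium) data, the strategy is a fixed-point/continuity argument on the Lagrangian formulation directly, without linearizing around $\hs$. Here the convexity (concavity) hypothesis on $h_0$ plays the role that $\hs''<0$ played above: it guarantees that the analogue of the energy functional built from $h_0$ is positive definite, so that the a priori estimates — obtained by the same weighted, $\partial_\xi$/$\partial_t$-differentiated energy method, now with coefficients frozen at an approximate solution — close on a short time interval. I would set up an iteration scheme (or a parabolic regularization adding $\varepsilon\partial_\xi^4$ and then passing $\varepsilon\to0$) to produce approximate solutions, derive uniform-in-the-iteration estimates in the high-order weighted norm plus its dissipative counterpart, extract a limit by compactness, and prove uniqueness by a weighted energy estimate on the difference of two solutions. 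The main obstacle in both parts is the same and is the genuinely hard technical core: handling the third-order capillary term together with the vacuum degeneracy simultaneously. The capillary term wants high regularity and, being dispersive rather than dissipative, contributes no damping by itself; the degenerate viscosity gives damping only weighted by $h$; and near the contact line the weights, the singular coefficients, and the boundary conditions all interact. Closing the estimates requires choosing exactly the right weight exponents so that every integration by parts produces either a sign-definite term or something absorbable, and using Hardy's inequality to trade the vanishing weight for a derivative — getting this bookkeeping to balance is where essentially all the work lies.
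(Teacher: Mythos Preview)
Your proposal is correct and matches the paper's approach closely: Lagrangian reformulation over a fixed interval, linearization around $\eta_\mathrm s=\mathrm{id}$ with the concavity $\hs''<0$ supplying coercivity, weighted energy estimates plus Hardy's inequality, a continuity argument for small data in part (ii), and the same machinery with $h_0$ in place of $\hs$ and smallness of time replacing smallness of perturbation in part (i).

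One structural point you gloss over is worth making explicit. In the paper the higher-order estimates are not obtained by applying $\partial_\xi$ to the equation and repeating the energy argument --- that route runs into trouble because spatial differentiation destroys the degenerate weights and the boundary bookkeeping. Instead the energy method is applied only to \emph{time}-differentiated versions of the equation (this is what the energy $\mathcal E_{\mathrm{NL},1}$ in \eqref{def:energy_nonlinear_1} controls), and then the higher \emph{spatial} regularity $\mathcal E_{\mathrm{NL},2}$ in \eqref{def:energy_nonlinear_2} is recovered by treating the equation as a degenerate elliptic equation in $\xi$ with the time derivatives as right-hand side (Sections \ref{sec:linear-elliptic} and \ref{sec:nonlinear_estimates}). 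This two-tier architecture --- temporal energy, then spatial bootstrap via elliptic estimates --- is exactly what makes the weight exponents balance, and is the concrete realization of the ``bookkeeping'' you flag at the end.
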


\section{Preliminaries}\label{sec:preliminaries}
In this section, we recall some weighted embedding inequalities. The following general version of the Hardy inequality whose proof can be found in \cite{hardy_inequalities_2001} will be used often in this paper:

\begin{lem}[Hardy's inequality, $ L^p $ version]\label{lm:hardy-p} Let $ k $ and $ p > 1 $ be given real numbers, and $ g $ be a function with bounded right-hand side in the following inequalities. Then
    \begin{itemize}
        \item if $ k + \dfrac{1}{p} > 1 $, one has
        \begin{equation}\label{ineq:Hardy-3}
            \int_0^1 \bigr( s^{k-1} g(s) \bigr)^p \,\mrm{d}s\leq C_{k, p} \int \biggl( \bigl( s^k \vert g'(s)\vert \bigr)^p + \bigl( s^k \vert g(s)\vert \bigr)^p \biggr) \,\mrm{d}s;
        \end{equation}
        \item if $ k + \dfrac{1}{p} < 1 $, one has
        \begin{equation}\label{ineq:Hardy-4}
            \int_0^1 \bigr( s^{k-1} (g(s) - g(0)) \bigr)^p \,\mrm{d}s\leq C_{k, p} \int \bigl( s^k \vert g'(s)\vert \bigr)^p  \,\mrm{d}s.
        \end{equation}
    \end{itemize}
\end{lem}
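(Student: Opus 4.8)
The plan is to establish the two cases of Hardy's inequality separately, each via an integration-by-parts argument against the weight $s^{kp}$ combined with H\"older's inequality; this is the classical Muckenhoupt-type approach. The key observation is that the exponent condition $k+\tfrac1p\gtrless 1$ governs which boundary term at $s=0$ (or which surrogate) can be discarded, so the two bullets are genuinely different regimes rather than one proof with cases.

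First I would treat the second bullet \eqref{ineq:Hardy-4}, where $k+\tfrac1p<1$, i.e. $kp<p-1$, so that $s\mapsto s^{kp}$ is integrable near $0$ with a margin. Setting $G(s):=g(s)-g(0)$, write $\int_0^1 (s^{k-1}G(s))^p\,\mrm ds$ and integrate by parts moving a factor $s^{kp-p}=s^{(k-1)p}$ to its antiderivative $\tfrac{1}{(k-1)p+1}s^{(k-1)p+1}$; the exponent $(k-1)p+1 = kp - p + 1 > 0$ is positive precisely in this regime, so the boundary contribution at $s=0$ vanishes because $G(0)=0$ and $G$ is controlled, while at $s=1$ one keeps the term (or, in the stated scale-invariant form, one works on $(0,\infty)$ or discards it using that the right-hand side integral may be taken over a larger interval — I would follow the normalization in \cite{hardy_inequalities_2001} verbatim). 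This produces $\int_0^1 (s^{k-1}G)^{p}\,\mrm ds \le C\int_0^1 s^{(k-1)p+1} G' G^{p-1}\,\mrm ds$ up to constants, and then H\"older with exponents $p$ and $p/(p-1)$ on $s^{k}|G'|$ against $s^{k-1}|G|^{p-1}$ gives a factor of the left-hand side to the power $(p-1)/p$, which is absorbed. This yields \eqref{ineq:Hardy-4}.

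Next I would do the first bullet \eqref{ineq:Hardy-3}, where $k+\tfrac1p>1$, i.e. $(k-1)p+1<0$. Now the same integration by parts has a boundary term at $s=0$ with the divergent factor $s^{(k-1)p+1}$, so one cannot simply drop it; instead the standard fix is to integrate by parts \emph{away from} the endpoint, i.e. rewrite $\int_\epsilon^1$, bound the boundary term at $s=\epsilon$ by $\epsilon^{(k-1)p+1}|g(\epsilon)|^p$ and observe this is nonpositive in sign (for the inequality direction) after the correct arrangement, or — cleaner — run the argument with the antiderivative anchored at $s=1$ rather than $s=0$, i.e. use $\int_s^1$ tails. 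The presence of the extra $\int (s^k|g|)^p$ term on the right of \eqref{ineq:Hardy-3} is exactly what one needs to close the estimate in this regime: after integration by parts and H\"older one controls the left side by the $g'$-term plus a multiple of the $g$-term, with no absorption issue. Again I would defer to the precise bookkeeping in \cite{hardy_inequalities_2001}.

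The main obstacle is purely the endpoint bookkeeping: making rigorous that the boundary term at $s=0$ genuinely vanishes (in case two) or is handled correctly (in case one) requires knowing that $g$ is regular enough for the integration by parts to be legitimate and that the formal limits exist — this is where the hypothesis ``$g$ has bounded right-hand side'' is used, via a density/approximation argument reducing to smooth $g$. Since the statement is quoted from \cite{hardy_inequalities_2001}, in the paper I would simply cite it; were I to prove it from scratch, the density reduction plus the two integration-by-parts computations above constitute the whole argument, and none of it is deep — the subtlety is entirely in not dropping a boundary term one is not allowed to drop.
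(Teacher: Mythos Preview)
Your overall strategy---fundamental theorem of calculus (or integration by parts) combined with H\"older/Young and absorption---matches the paper's approach. However, you have the sign of $(k-1)p+1$ reversed in both cases, and this is not a harmless typo: it determines which endpoint anchors the argument, so as written your proof would fail.

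Concretely, $k+\tfrac1p>1 \iff kp>p-1 \iff (k-1)p+1>0$, and likewise $k+\tfrac1p<1 \iff (k-1)p+1<0$. So in the second bullet (your first case) the exponent $(k-1)p+1$ is \emph{negative}, and $s^{(k-1)p+1}\to\infty$ at $s=0$; the boundary vanishing you claim from ``$(k-1)p+1>0$'' is simply false. The correct argument (and what the paper does) is to use $G(0)=0$ to write $|G(s)|^p\lesssim\int_0^s |G|^{p-1}|G'|\,\mathrm d\sigma$, then Fubini: since $(k-1)p<-1$, one has $\int_\sigma^1 s^{(k-1)p}\,\mathrm ds\lesssim \sigma^{(k-1)p+1}$, and Young's inequality with a small parameter closes the estimate by absorption. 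Conversely, in the first bullet one has $(k-1)p+1>0$, so $s^{(k-1)p}$ \emph{is} integrable near $0$ and there is no divergent boundary term; the paper anchors the FTC at a point $s^*\in(1/2,1)$ chosen by the mean-value theorem so that $|g(s^*)|^p$ is controlled by $\int s^{kp}|g|^p$, writes $|g(s)|^p\lesssim |g(s^*)|^p+\int_s^{1}|g|^{p-1}|g'|$, and uses $\int_0^\sigma s^{(k-1)p}\,\mathrm ds\lesssim\sigma^{(k-1)p+1}$ after Fubini. The extra $\int(s^k|g|)^p$ term on the right of \eqref{ineq:Hardy-3} is precisely what absorbs the $|g(s^*)|^p$ contribution.

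In short: swap your two sign claims and correspondingly swap which endpoint you integrate from in the FTC representation, and your sketch becomes the paper's proof.
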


Next, the following weighted Poincare inequality can be found in \cite{Feireisl2004}:
\begin{lem}[Weighted Poincare's inequality]
Let $ h_\mrm s $ be given as in \eqref{eq:equilibrium}. Suppose that $ g $ satisfies
$$
\int_{-1}^1 h_\mrm s g\,\idx = 0.
$$
Then one has
\begin{align}
\label{ineq:w-poincare}
    \int_{-1}^{1} h_\mrm s \vert g \vert^2 \,\idx \leq C \int_{-1}^{1} h_\mrm s \vert \partial_{\xi} g \vert^2 \,\idx\\
    \intertext{and}
    \label{ineq:w-poincare-1}
    \int_{-1}^{1} \vert g \vert^2 \,\idx \leq C \int_{-1}^{1} h_\mrm s \vert \partial_{\xi} g \vert^2 \,\idx.
\end{align}

\end{lem}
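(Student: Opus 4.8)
The plan is to reduce both inequalities to the classical (unweighted) one-dimensional Poincaré inequality after an appropriate change of variables or cut-off, treating the degeneracy of $h_\mrm s$ at the endpoints $\xi=\pm 1$ via Hardy's inequality (\Cref{lm:hardy-p}). First I would record the essential structure of the equilibrium profile: from \eqref{eq:equilibrium} one has $h_\mrm s\in C^\infty([-1,1])$, $h_\mrm s>0$ on $(-1,1)$, $h_\mrm s(\pm 1)=0$, and $h_\mrm s'(\mp 1)=\pm 1\ne 0$, so that $h_\mrm s$ vanishes \emph{linearly} at each endpoint. Consequently there are constants $0<c_1\le c_2$ with $c_1\,\mathrm{dist}(\xi,\partial(-1,1))\le h_\mrm s(\xi)\le c_2\,\mathrm{dist}(\xi,\partial(-1,1))$ on $[-1,1]$; this two-sided comparison with the distance weight is what lets Hardy's inequality enter. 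I would also fix a cut-off: choose $\chi\in C_c^\infty(-1,1)$ with $0\le\chi\le 1$, $\chi\equiv 1$ on $[-1+\delta,1-\delta]$ for small $\delta$, so that the analysis splits into an interior region (where $h_\mrm s$ is bounded below and the weighted inequality is just the standard Poincaré inequality on a compact subinterval) and two boundary layers near $\pm 1$.

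For \eqref{ineq:w-poincare}, the mean-zero constraint $\int_{-1}^1 h_\mrm s g\,\mathrm d\xi=0$ is the natural one for the weighted space $L^2(h_\mrm s\,\mathrm d\xi)$. The cleanest route is a compactness/contradiction argument: if \eqref{ineq:w-poincare} fails, there is a sequence $g_n$ with $\int h_\mrm s|g_n|^2=1$, $\int h_\mrm s|\partial_\xi g_n|^2\to 0$, and $\int h_\mrm s g_n=0$; the bound $\|\partial_\xi g_n\|_{L^2_{\mathrm{loc}}}\to 0$ on interior compacta plus a uniform local $L^2$ bound gives (along a subsequence) $g_n\to g$ in $H^1_{\mathrm{loc}}(-1,1)$ with $\partial_\xi g=0$, hence $g\equiv\mathrm{const}$; the mean-zero constraint passes to the limit (using $h_\mrm s\in L^1$ and lower semicontinuity/uniform integrability of $h_\mrm s|g_n|^2$ supplied by Hardy — this is where a little care is needed) forcing $g=0$, contradicting $\int h_\mrm s|g|^2=1$ once one verifies the normalization also passes to the limit. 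Alternatively, one can give a direct proof: write $g=\bar g+(g-\bar g)$ where $\bar g=\big(\int h_\mrm s\big)^{-1}\int h_\mrm s g=0$ is the weighted average, so $g$ already has weighted mean zero; then $|g(\xi)-g(\eta)|\le\int_\eta^\xi|\partial_\xi g|$, square, integrate against $h_\mrm s(\xi)h_\mrm s(\eta)\,\mathrm d\xi\,\mathrm d\eta$, and bound the double integral of $\big(\int_\eta^\xi|\partial_\xi g|\big)^2$ by $\big(\int_{-1}^1|\partial_\xi g|\,\mathrm d\xi\big)^2$ controlled via Cauchy–Schwarz \emph{with the weight}: $\int_{-1}^1|\partial_\xi g| = \int_{-1}^1 h_\mrm s^{1/2}|\partial_\xi g|\,h_\mrm s^{-1/2}\le\big(\int h_\mrm s|\partial_\xi g|^2\big)^{1/2}\big(\int h_\mrm s^{-1}\big)^{1/2}$, but $\int h_\mrm s^{-1}=\infty$, so this naive step fails at the endpoints and must be repaired by the cut-off: on $[-1+\delta,1-\delta]$ it works, and on the two layers one uses that $h_\mrm s g$ and $h_\mrm s\partial_\xi g$ are controlled and applies \eqref{ineq:Hardy-3} with $p=2$ and $k$ chosen so $k+\tfrac12>1$ to absorb the boundary contributions.

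For \eqref{ineq:w-poincare-1}, which controls the \emph{unweighted} $L^2$ norm of $g$ by the weighted Dirichlet energy, the mechanism is purely Hardy at the endpoints: near $\xi=1$ set $s=1-\xi$, so $h_\mrm s\sim c\,s$, and we must bound $\int_0^\delta|g(s)|^2\,\mathrm ds$ by $\int_0^\delta s\,|g'(s)|^2\,\mathrm ds$ (plus interior terms). Writing $g(s)=g(\delta)-\int_s^\delta g'(\sigma)\,\mathrm d\sigma$ and using Hardy's inequality \eqref{ineq:Hardy-3} with $p=2$, $k=\tfrac12$ (so $k+\tfrac1p=1$ — borderline, so one actually takes $k$ slightly bigger, or uses the standard weighted-Hardy with weight $s$ directly) gives $\int_0^\delta|g|^2\lesssim\int_0^\delta s|g'|^2+\int_0^\delta s|g|^2\lesssim\int_0^\delta s|g'|^2 + (\text{interior }L^2 g)$; the interior $L^2$ norm of $g$ is then absorbed using \eqref{ineq:w-poincare} already proved (since on a compact interior interval $h_\mrm s$ is comparable to $1$). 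The symmetric estimate holds near $\xi=-1$, and the interior is trivial. The main obstacle, and the only genuinely delicate point, is the passage through the endpoints where $h_\mrm s$ degenerates: one must invoke the \emph{correct} exponent in Hardy's inequality (the linear vanishing of $h_\mrm s$ corresponds to weight exponent $k$ with $2k=1$, exactly the borderline case $k+\tfrac1p=1$ in \Cref{lm:hardy-p}), so either a small perturbation of the exponent or the classical weighted Hardy inequality $\int_0^\delta |g|^2\,\mathrm ds\le C\int_0^\delta s|g'|^2\,\mathrm ds$ for $g$ with controlled behavior must be used — I expect this is precisely why the paper cites \cite{Feireisl2004} rather than re-deriving it, and in a full proof I would either reproduce that borderline weighted-Hardy estimate or reduce to \Cref{lm:hardy-p} with $k=\tfrac12+\varepsilon$ and absorb the $\varepsilon$-discrepancy using the already-established \eqref{ineq:w-poincare}.
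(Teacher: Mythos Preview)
Your compactness/contradiction argument for \eqref{ineq:w-poincare} is precisely the route the paper takes: it cites \cite{Feireisl2004} and sketches exactly the normalize--extract-a-constant-limit--contradict-the-mean-zero argument you call ``the cleanest route.'' For \eqref{ineq:w-poincare-1} the paper simply says ``the proof is similar and thus is omitted,'' i.e.\ the same contradiction scheme, whereas you instead propose a constructive decomposition (interior Poincar\'e plus boundary-layer Hardy). Both are sound; your approach makes explicit where the linear vanishing of $h_\mrm s$ enters and correctly flags the borderline exponent $k+\tfrac{1}{p}=1$ in \Cref{lm:hardy-p}, which is more detail than the paper provides. Your alternative direct double-integral proof of \eqref{ineq:w-poincare} is superfluous --- as you yourself note, the naive Cauchy--Schwarz step fails because $h_\mrm s^{-1}\notin L^1$, and the cut-off repair just recreates the compactness argument in disguise --- so drop it and keep the contradiction proof.
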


\section{Lagrangian formulation and main results}\label{sec:lagrangian}

\subsection{Lagrangian coordinates with reference to the equilibrium profile}

To investigate the stability profile \eqref{eq:equilibrium} of the shallow water equations with surface tension \eqref{sys:sw-st}, we introduce the Lagrangian coordinates $ (\xi, t) $ in a way that captures the equilibrium profile as coefficients. Define $ x = \eta(\xi, t) $ by
\begin{equation}\label{def:lagrangian-map}
    \int_{a(t)}^{\eta(\xi,t)} h(x,t)\dx = \int_{-1}^\xi h_\mrm s(x)\dx.
\end{equation}
In particular, we assume
\begin{equation}
    \int_{a(t)}^{b(t)} h(x,t) \dx = \int_{-1}^1 h_\mrm s(x)\dx,
\end{equation}
which, thanks to the conservation of mass \eqref{cnsv:mass}, is a restriction on the initial data.
Then taking the space and time derivatives ($\partial_{\xi} $ and $ \partial_t$) in the $ (\xi,t) $-coordinates leads to, for $ \xi \in I= (-1,1) $,
\begin{gather}
    h(\eta(\xi, t), t) \partial_\xi \eta(\xi,t) = h_\mrm s(\xi), \label{def:h-lagrangian} \\
    \begin{gathered}
    h(\eta(\xi,t),t)\dt \eta(\xi,t) = - \int_{a(t)}^{\eta(\xi,t)} \dt h(x,t)\dx = \int_{a(t)}^{\eta(\xi,t)} \partial_x (h u)(x,t)\dx \\
    = h(\eta(\xi, t), t) u(\eta(\xi, t), t),
    \end{gathered} \label{def:flow-map-lagrangian}
\end{gather}
where \eqref{eq:sw-st-height} is used. Notably, \eqref{def:flow-map-lagrangian} is equivalent to the Lagrangian flow map \cites{Jang2009a,Coutand2011a}, and the benefit of \eqref{def:lagrangian-map} is that it captures the equilibrium \cites{LuoXinZeng2016,LuoXinZeng2015}. Therefore, system \eqref{sys:sw-st} can be recast in the $ (\xi, t) $-coordinates as
\begin{subequations}\label{sys:sw-st-lagrangian}
    \begin{gather}
        h(\xi,t) = \dfrac{h_\mrm s(\xi)}{\partial_\xi \eta(\xi,t)}, \quad \dt \eta(\xi,t) = u(\xi,t),\label{lgeq:cntny}\\
        \begin{gathered}
        h_\mrm s\dt u + \partial_\xi \biggl( \bigl(\dfrac{h_\mrm s}{\partial_\xi \eta}\bigr)^2 \biggr) - 2 h_\mrm s \dfrac{\partial_\xi}{\partial_\xi \eta} \biggl(\dfrac{\partial_\xi}{\partial_\xi \eta}\biggl( \dfrac{\partial_\xi}{\partial_\xi \eta}\biggl(\dfrac{h_\mrm s}{\partial_\xi \eta} \biggr)\biggr)\biggr)\\ = 2 \partial_\xi \biggl( \dfrac{h_\mrm s}{\partial_\xi \eta} \dfrac{\partial_\xi u}{\partial_\xi \eta} \biggr),\end{gathered} \label{lgeq:mmtm}\\
        \partial_\xi \eta\vert_{\xi = -1,1} = 1,\quad \partial_\xi u\vert_{\xi = -1,1} = 0, \label{bc:lagrangian}
    \end{gather}
\end{subequations}
where we have used the same notations for the variables $ h, u $ in the Lagrangian coordinates as in the original Euclidean coordinates. Notice that \eqref{bc:lagrangian} is equivalent to \eqref{bc:sw-st-cnt-angl}, which can be seen from Remark \ref{rmk:bc}.

\bigskip

We consider the solution $ \eta $ to system \eqref{sys:sw-st-lagrangian} near the equilibrium $ \eta_\mrm s(\xi) = \xi $. Thanks to \eqref{lgeq:cntny} and \eqref{bc:lagrangian}, we consider  $ \eta(\xi, t) $ as follows:
\begin{equation}\label{def:perturbation}
    \eta(\xi,t) = \xi + \theta \qquad \text{with} \quad \partial_\xi \theta\vert_{\xi = -1,1} = 0.
\end{equation}
Notice that $ u =\dt \eta = \dt \theta $. Then in terms of the perturbation variable $ \theta $, system \eqref{sys:sw-st-lagrangian} can be written as
\begin{equation}\tag{\ref*{sys:sw-st-lagrangian}'}\label{lgeq:prtbtn}
    \begin{gathered}
        h_\mrm s \partial_{tt} \theta + \biggl( \bigl( \dfrac{h_\mrm s}{1+\theta_\xi} \bigr)^2 \biggr)_\xi - 2 \dfrac{h_\mrm s}{1+\theta_\xi} \biggl(\dfrac{1}{1+\theta_\xi} \biggl(\dfrac{1}{1+\theta_\xi}\biggl( \dfrac{h_\mrm s}{1+\theta_\xi} \biggr)_\xi \biggr)_\xi \biggr)_\xi \\
        = 2 \biggl( \dfrac{h_\mrm s}{(1+\theta_\xi)^2} \theta_{\xi t} \biggr)_\xi, \qquad \text{with} \quad \theta_\xi\vert_{\xi = -1,1} = 0.
    \end{gathered}
\end{equation}
or equivalently, in the conservative form
\begin{equation}\label{lgeq:prtbtn-csv}
    \begin{gathered}
        h_\mrm s \partial_{tt} \theta + \biggl\lbrace \biggl( \dfrac{\hs}{1+\theta_\xi} \biggr)^2 - 2 \dfrac{\hs}{1+\theta_\xi} \dfrac{1}{1 + \theta_\xi} \biggl\lbrack \dfrac{1}{1+\theta_\xi} \biggl( \dfrac{\hs}{1+\theta_\xi} \biggr)_\xi \biggr\rbrack_\xi \\
        + \biggl\lbrack \dfrac{1}{1+\theta_\xi} \biggl( \dfrac{\hs}{1+\theta_\xi} \biggr)_\xi\biggr\rbrack^2 \biggr\rbrace_\xi  
        = 2 \biggl( \dfrac{h_\mrm s}{(1+\theta_\xi)^2} \theta_{\xi t} \biggr)_\xi, \qquad \text{with} \quad \theta_\xi\vert_{\xi = -1,1} = 0.
    \end{gathered}
\end{equation}

\subsection{Main result: Asymptotic stability}\label{sec:asymptotic_stability}
We now state the main stability result in this paper. For simplicity, we denote $I=(-1,1)$ in the rest of the paper. The asymptotic stability of the stationary state of system \eqref{sys:sw-st-lagrangian} can be stated in the Lagrangian coordinates as follows:
\begin{thm}[Asymptotic Stability]\label{thm-NL}
    Let $h_\mrm s$ be the equilibrium defined in \eqref{eq:equilibrium}. There exists a constant $0<\varepsilon \ll 1$  such that if the total initial energy $ \mathfrak E_0 $ defined in \eqref{def:initial_energy}, below, satisfies
    \begin{equation}
         \mathfrak E_0<\varepsilon,
    \end{equation}
    then the system \eqref{sys:sw-st-lagrangian} admits a unique global strong solution $(\eta, u)$ in $I\times [0,\infty)$ with
    \begin{equation}\left\{
    \begin{aligned}
        &\eta, \eta_\xi-1\in C^1([0,\infty);L^2(I)),~~\eta_{\xi\xi},h_\mrm s\partial_\xi^3\eta\in C([0,\infty);L^2(I)),\\
        &h_s^{3/2}\partial_\xi^4\eta\in L^\infty((0,\infty);L^2(I)),~~
        u,u_\xi,h_\mrm s^{1/2}u_t\in C([0,\infty);L^2(I)),\\
        &u_{\xi t},u_{\xi\xi},h_\mrm s^{1/2}u_{tt},h_\mrm s\partial_\xi^3 u\in L^\infty([0,\infty);L^2(I)),
        \end{aligned}  \right.             
    \end{equation}
    Moreover, we have the following asymptotic stability:
    \begin{equation}
         \mathcal E_\mrm{NL}(t)\triangleq \mathcal E_\mrm{NL,1}(t)+\mathcal E_\mrm{NL,2}(t)<Ce^{-C_1t},
    \end{equation}
    for any $t \in [0,\infty)$ and some constants $C, C_1 \in (0,\infty)$, where $\mathcal E_\mrm{NL,1}$ and $\mathcal E_\mrm{NL,2}$ are defined in \eqref{def:energy_nonlinear_1} and \eqref{def:energy_nonlinear_2}, below, respectively.
\end{thm}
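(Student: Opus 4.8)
\textbf{Overall strategy.} The plan is to prove Theorem \ref{thm-NL} by a continuity/bootstrap argument built on a weighted, degenerate energy estimate for the perturbation variable $\theta$ solving \eqref{lgeq:prtbtn}. The natural energy is the nonlinear analogue of the key linearized functional \eqref{lnest:014} mentioned in the introduction; it will combine $L^2$-type norms of $\theta, \theta_\xi, u, u_\xi$ with $h_\mrm s$-weighted norms of the higher derivatives $h_\mrm s\partial_\xi^3\eta$, $h_\mrm s^{3/2}\partial_\xi^4\eta$, $h_\mrm s^{1/2}u_t$, $h_\mrm s\partial_\xi^3 u$, plus time-differentiated quantities $u_{\xi t}, h_\mrm s^{1/2}u_{tt}$. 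First I would fix $\varepsilon$ small, assume on a maximal time interval $[0,T]$ that $\mathcal E_\mathrm{NL}(t)\le 2\varepsilon$ (or that $\|\theta_\xi\|_{L^\infty}\le\tfrac12$, which follows from the energy bound via the embeddings of \Cref{sec:preliminaries}), and show the a priori estimate $\mathcal E_\mathrm{NL}(t)\le C\mathfrak E_0 + C\varepsilon^{1/2}\sup_{[0,T]}\mathcal E_\mathrm{NL}$ together with the dissipation estimate closing an exponential decay. The smallness of $\theta_\xi$ guarantees $1+\theta_\xi$ is bounded above and below, so all the rational nonlinearities in \eqref{lgeq:prtbtn} are controlled, and $h(\xi,t)=h_\mrm s(\xi)/(1+\theta_\xi)$ stays comparable to $h_\mrm s$ — in particular the support does not degenerate and the contact angle condition $\theta_\xi|_{\xi=\pm1}=0$ persists.

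\textbf{Key steps.} (1) \emph{Basic energy identity.} Multiply \eqref{lgeq:prtbtn} by $u=\theta_t$ and integrate over $I$; using the boundary condition $\theta_\xi|_{\pm1}=0$ the surface-tension (third-order) term integrates by parts cleanly, and one recovers the nonlinear version of \eqref{cnsv:energy}: $\tfrac{d}{dt}\mathscr H_\mathrm{Lag}(\theta)+\int_I \tfrac{2h_\mrm s}{(1+\theta_\xi)^2}|u_\xi|^2\,\idx=0$, where $\mathscr H_\mathrm{Lag}$ controls $\int h_\mrm s|u|^2 + (\text{potential terms in }\theta_\xi,\partial_\xi^2\eta)$. (2) \emph{Coercivity of the potential energy} — here is where the \emph{concavity of $h_\mrm s$} enters: expanding the potential part of $\mathscr H_\mathrm{Lag}$ to second order around $\theta=0$ and using $\hs''' = \hs'$ together with $\hs<0$ concave, one shows the quadratic form is positive definite modulo a zero mode (translations), which is removed by the center-of-mass/mass normalization \eqref{def:lagrangian-map}; the weighted Poincaré inequality \eqref{ineq:w-poincare} handles the $h_\mrm s$-weighted zero-average term. (3) \emph{Elliptic/Hardy upgrades.} Rewrite \eqref{lgeq:prtbtn} at fixed $t$ as an ODE for $\partial_\xi^3\eta$ (or $\partial_\xi^4\eta$) with $h_\mrm s$-degenerate coefficients; since $h_\mrm s$ vanishes simply (like $\mathrm{dist}(\cdot,\partial I)$) at $\xi=\pm1$ with $\hs'(\pm1)=\mp1\ne0$, Hardy's inequality \Cref{lm:hardy-p} with the appropriate exponent $k$ converts control of $h_\mrm s\partial_\xi^3\eta$ into control of $\partial_\xi^2\eta$, etc., and lets the viscous dissipation $\int h_\mrm s|u_\xi|^2$ dominate boundary-degenerate lower-order terms. (4) \emph{Time-differentiated estimates.} Differentiate \eqref{lgeq:prtbtn} in $t$, multiply by $u_t$, and repeat, picking up $\int h_\mrm s|u_{\xi t}|^2$ dissipation and controlling commutator terms of the form (derivative of rational nonlinearity)$\times u_\xi$ by $\varepsilon^{1/2}$ times the energy, using $L^\infty$ bounds from Steps (1)–(3). (5) \emph{Decay.} Assemble a Lyapunov functional $\mathcal E_\mathrm{NL} + \delta\,(\text{interaction terms like }\int h_\mrm s u\,\theta)$ whose time derivative is bounded above by $-C_1\mathcal E_\mathrm{NL}$ once $\varepsilon$ is small and the coercivity/Poincaré estimates are in force, yielding $\mathcal E_\mathrm{NL}(t)\le Ce^{-C_1 t}$. (6) \emph{Global existence.} Combine the local existence theorem (Theorem \ref{thm-local}) with the uniform-in-time a priori bound to continue the solution for all $t\ge0$; uniqueness follows from an energy estimate on the difference of two solutions.

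\textbf{Main obstacle.} The hardest part is the higher-order a priori estimate where the viscosity is degenerate ($\sim h_\mrm s$, vanishing at the contact lines) and the surface-tension term is \emph{third order} in space with coefficient $h_\mrm s$: one must show that the weighted dissipation $\int h_\mrm s(|u_\xi|^2+|u_{\xi t}|^2)\,\idx$, together with the Hardy inequalities, is genuinely strong enough to absorb the boundary-singular nonlinear commutators and to provide the claimed regularity $h_\mrm s^{3/2}\partial_\xi^4\eta, h_\mrm s\partial_\xi^3 u\in L^\infty_t L^2_\xi$ all the way up to $\xi=\pm1$. This is delicate because each extra $\xi$-derivative worsens the degeneracy, so the choice of weights (powers of $h_\mrm s$) at each differentiation level must be matched exactly to the admissible Hardy exponent $k+\tfrac1p>1$ or $<1$ — getting these weight bookkeeping right, and verifying that the quadratic form from Step (2) remains coercive after the elliptic upgrades (which is precisely where $\hs'''=\hs'$ and concavity of $h_\mrm s$ are used), is the crux. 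A secondary difficulty is handling the moving boundary/zero-mode: one must carefully justify that the Lagrangian normalization \eqref{def:lagrangian-map} kills exactly the translational zero eigenvalue of the linearized operator so that coercivity is not merely semi-definite.
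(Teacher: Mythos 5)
Your proposal is correct and follows essentially the same route as the paper: a weighted degenerate energy with time-differentiated quantities whose coercivity rests on the concavity of $\hs$ (the positivity of $2\vert\hs'\vert^2-4\hs\hs''+\hs^2$), Hardy-type elliptic estimates trading temporal for spatial derivatives with matched powers of $\hs$, a Lyapunov functional augmented by cross terms of the form $\int \hs\,\dt^{k+1}\theta\cdot\dt^k\theta\,\idx$ giving exponential decay, and a continuity/bootstrap argument closed by smallness of $\mathfrak E_0$, with the zero mode removed exactly as in \eqref{lnest:006}. The only cosmetic differences are that the paper constructs the local solution by Galerkin within the perturbative framework rather than citing Theorem \ref{thm-local}, and it needs a small corrector $\mathcal E_\delta$ in the energy to absorb one nonlinear term, which falls under your general commutator step.
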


\bigskip 
\begin{rmk}
    For general initial height $h_0>0$ in $[a, b]$, we can define a diffeomorphism $\eta_0$ from $[-1,1]$ to $[a,b]$ such that 
    \begin{equation}
        \int_a^{\eta_0(\xi)}h_0(x)\,dx=\int_{-1}^\xi h_s(x)\,dx,
    \end{equation}
    which is agree with \eqref{def:lagrangian-map}.
\end{rmk}

\section{Linear stability analysis}\label{sec:linear_stability}
Assuming $ \theta = \mathcal O (\varepsilon) $ with small $ \varepsilon $ representing the size of perturbation, from \eqref{lgeq:prtbtn}, one can write down the following linear equation:
\begin{equation}\label{lgeq:lnr_sys}
    \begin{gathered}
    h_\mrm s \partial_{tt} \theta_\mrm l - 2\bigl(h_\mrm s^2 \theta_{\mrm l, \xi} \bigr)_\xi
    + 2 h_\mrm s \theta_{\mrm l,\xi} \partial_{\xi\xi\xi} h_\mrm s 
    + 2 h_\mrm s \partial_\xi \bigl(\theta_{\mrm l, \xi} \partial_{\xi\xi}  h_\mrm s \bigr) 
    + 2 h_\mrm s \partial_{\xi\xi}  \bigl( \theta_{\mrm l, \xi} \partial_\xi  h_\mrm s \bigr)  \\
    + 2 h_\mrm s \partial_{\xi\xi\xi} \bigl( h_\mrm s \theta_{\mrm l, \xi} \bigr)
    = 2 \partial_\xi \bigl( h_\mrm s \theta_{\mrm l, \xi t} \bigr), \qquad \text{with} \quad \theta_{\mrm l,\xi} \vert_{\xi = -1,1} = 0,
    \end{gathered}
\end{equation}
where we have substituted \eqref{eqeq:001} with \eqref{def:constants} and \eqref{def:constants-2}, i.e., 
\begin{equation}\label{eq:equilibrium-1}
\begin{gathered}
    \partial_\xi h_\mrm s = \partial_{\xi\xi\xi} h_\mrm s \quad \text{with} \\ 
    \partial_{\xi} h_\mrm s(1) = - \partial_{\xi} h_\mrm s(-1) = -1 \quad \text{and} \quad h_\mrm s(1) = h_\mrm s(-1) = 0,
    \end{gathered}
\end{equation}
which can be solved as \eqref{eq:equilibrium}, and satisfies
\begin{equation}\label{eq:equilibrium-2}
    \partial_{\xi\xi} h_\mrm s = h_\mrm s - \dfrac{e^2 + 1}{e^2 -1} = - \dfrac{e^{\xi+1} + e^{1-\xi}}{e^2-1} < 0.
\end{equation}

Moreover, one can rewrite the `surface tension' terms as
\begin{equation}
    \begin{gathered}
    2 h_\mrm s \theta_{\mrm l,\xi} \partial_{\xi\xi\xi} h_\mrm s 
    + 2 h_\mrm s \partial_\xi \bigl(\theta_{\mrm l, \xi} \partial_{\xi\xi}  h_\mrm s \bigr) 
    + 2 h_\mrm s \partial_{\xi\xi}  \bigl( \theta_{\mrm l, \xi} \partial_\xi  h_\mrm s \bigr) 
    + 2 h_\mrm s \partial_{\xi\xi\xi} \bigl( h_\mrm s \theta_{\mrm l, \xi} \bigr)\\
     = 2 \partial_\xi\bigl( 4 h_\mrm s \partial_{\xi\xi} h_\mrm s \theta_{\mrm l, \xi} - 2 \vert \partial_\xi h_\mrm s \vert^2 \theta_{\mrm l, \xi} + \partial_\xi ( h_\mrm s^2 \theta_{\mrm l, \xi\xi}) \bigr).
    \end{gathered}
\end{equation}
Therefore, \eqref{lgeq:lnr_sys} can be written as
\begin{equation*}\tag{\ref{lgeq:lnr_sys}'}\label{lgeq:lnr_sys-1}
    \begin{gathered}
    h_\mrm s \partial_{tt} \theta_\mrm l
    - 2 \partial_\xi\lbrack ( \hs^2 + 2 \vert \partial_\xi h_\mrm s \vert^2 - 4 h_\mrm s \partial_{\xi\xi} h_\mrm s) \theta_{\mrm l, \xi} \rbrack + 2 \partial_{\xi\xi}(h_\mrm s^2 \theta_{\mrm l, \xi\xi})\\
    = 2 \partial_\xi \bigl( h_\mrm s \theta_{\mrm l, \xi t} \bigr), \qquad \text{with} \quad \theta_{\mrm l,\xi} \vert_{\xi = -1,1} = 0.
    \end{gathered}
\end{equation*}

\subsection{$L^2$ stability}

Taking the $ L^2 $-inner product of \eqref{lgeq:lnr_sys-1} with $ \theta_{\mrm l,t} $ yields
\begin{equation}\label{lnest:003}
    \begin{gathered}
        \dfrac{\mathrm{d}}{\mathrm{d}t} \biggl\lbrace \dfrac{1}{2} \int h_\mrm s \vert \theta_{\mrm l, t} \vert^2 \,\idx + \int ( 2 \vert \partial_\xi h_\mrm s\vert^2 - 4 h_\mrm s \partial_{\xi\xi}h_\mrm s + \hs^2) \vert \theta_{\mrm l, \xi} \vert^2 \,\idx \\
        +  \int \vert h_\mrm s \vert^2 \vert \theta_{\mrm l, \xi\xi} \vert^2 \,\idx \biggr\rbrace + 2 \int h_\mrm s \vert \theta_{\mrm l, \xi t} \vert^2 \,\idx = 0.
    \end{gathered}
\end{equation}
Moreover, by taking the time derivatives in \eqref{lgeq:lnr_sys-1} and performing the same arguments as above, one can derive the same estimates as in \eqref{lnest:003} with $ \theta_\mrm l $ replaced by $ \partial_t^k \theta_\mrm l $, for any $ k \in \mathbb N $; that is
\begin{equation}\label{lnest:004}
     \begin{gathered}
        \dfrac{\mathrm{d}}{\mathrm{d}t} \biggl\lbrace \dfrac{1}{2} \int h_\mrm s \vert \dt^{k+1} \theta_{\mrm l} \vert^2 \,\idx +  \int ( 2 \vert \partial_\xi h_\mrm s\vert^2 - 4 h_\mrm s \partial_{\xi\xi}h_\mrm s + \hs^2 ) \vert \dt^k \theta_{\mrm l, \xi} \vert^2 \,\idx \\
        +  \int \vert h_\mrm s \vert^2 \vert \dt^k \theta_{\mrm l, \xi\xi} \vert^2 \,\idx \biggr\rbrace + 2 \int h_\mrm s \vert \dt^{k+1} \theta_{\mrm l, \xi} \vert^2 \,\idx = 0.
    \end{gathered}   
\end{equation}

On the other hand, from \eqref{lgeq:lnr_sys-1}, one can derive
\begin{equation}\label{lnest:005}
    \dfrac{\mathrm{d}}{\mathrm{d}t} \int h_\mrm s \theta_{\mrm l, t} \,\idx = \int h_\mrm s \theta_{\mrm l, tt} \,\idx = 0.
\end{equation}
Without loss of generality, we assume that
\begin{equation}\label{lnest:006}
    \int h_\mrm s \theta_{\mrm l,t} \,\idx = 0 \qquad \text{and} \qquad \int h_\mrm s \theta_{\mrm l} \,\idx = 0.
\end{equation}

In addition, integrating \eqref{lgeq:lnr_sys-1} from $ \xi = -1 $ to $ \xi $ yields
\begin{equation}\label{lnest:007}
    \begin{gathered}
    2 h_\mrm s \partial_t \theta_{\mrm l, \xi}- 2 \partial_\xi (h_\mrm s^2 \theta_{\mrm l, \xi\xi}) + 2( 2 \vert \partial_\xi h_\mrm s\vert^2  -  4 h_\mrm s \partial_{\xi\xi} h_\mrm s +  h_\mrm s^2 ) \theta_{\mrm l, \xi} \\ = \int_{-1}^\xi (h_\mrm s \partial_{tt} \theta_\mrm l)(\sigma) \,d\sigma.
    \end{gathered}
\end{equation}
Taking the $ L^2 $-inner product of \eqref{lnest:007} with $ \theta_{\mrm l, \xi} $ yields
\begin{equation}\label{lnest:008}
    \begin{gathered}
        \dfrac{\mathrm{d}}{\mathrm{d}t} \int h_\mrm s \vert \theta_{\mrm l, \xi} \vert^2 \,\idx + 2 \int h_\mrm s^2 \vert \theta_{\mrm l, \xi\xi} \vert^2 \,\idx + 2 \int ( 2 \vert \partial_\xi h_\mrm s \vert^2 - 4 h_\mrm s \partial_{\xi\xi} h_\mrm s + h_\mrm s^2 \bigr) \vert \theta_{\mrm l, \xi} \vert^2 \,\idx \\
        = - \int h_\mrm s \partial_{tt} \theta_\mrm l \cdot \theta_{\mrm l} \,\idx = - \dfrac{\mathrm{d}}{\mathrm{d}t} \int h_\mrm s \partial_t \theta_{\mrm l}\cdot \theta_\mrm l \,\idx + \int h_\mrm s \vert \theta_{\mrm l, t} \vert^2 \,\idx.
    \end{gathered}
\end{equation}
Notice that, thanks to \eqref{lnest:006}, applying \eqref{ineq:w-poincare} imples that
\begin{equation}\label{lnest:008-1}
    \int h_\mrm s \vert \theta_{\mrm l, t} \vert^2 \,\idx \lesssim \int h_\mrm s \vert \partial_\xi \theta_{\mrm l, t} \vert^2 \,\idx \quad \text{and} \quad \int h_\mrm s \vert \theta_{\mrm l} \vert^2 \,\idx \lesssim \int h_\mrm s \vert \partial_\xi \theta_{\mrm l} \vert^2 \,\idx. 
\end{equation}
Therefore, there exists a constant $ \mfk c_1 \in (0,\infty) $ such that, after adding \eqref{lnest:003} with $ \mfk c_1 \times \eqref{lnest:008} $, 
\begin{equation}\label{lnest:009}
    \begin{gathered}
        \dfrac{\mathrm{d}}{\mathrm{d}t} \mathcal E_0 + \mathcal D_0 = 0,
    \end{gathered}
\end{equation}
where
\begin{align}
    &\label{lnest:energy-0} \begin{aligned}
        \mathcal E_{0} : = &  \dfrac{1}{2} \int h_\mrm s \vert \theta_{\mrm l, t} \vert^2 \,\idx + \int ( 2 \vert \partial_\xi h_\mrm s\vert^2 - 4 h_\mrm s \partial_{\xi\xi}h_\mrm s + h_\mrm s^2) \vert \theta_{\mrm l, \xi} \vert^2 \,\idx \\
        & \qquad +  \int \vert h_\mrm s \vert^2 \vert \theta_{\mrm l, \xi\xi} \vert^2 \,\idx + \mfk c_1 \int h_\mrm s \vert \theta_{\mrm l, \xi} \vert^2 \,\idx + \mfk c_1 \int h_\mrm s\dt \theta_\mrm l \cdot \theta_\mrm l \,\idx, 
    \end{aligned}
        \\
    &\label{lnest:dissipation-0}\begin{aligned}    
        \mathcal D_{0} := & 2 \int h_\mrm s \vert \theta_{\mrm l, \xi t} \vert^2 \,\idx - \mfk c_1 \int h_\mrm s \vert \theta_{\mrm l, t} \vert^2 \,\idx + 2 \mfk c_1 \int h_\mrm s^2 \vert \theta_{\mrm l, \xi\xi} \vert^2 \,\idx \\
        & \qquad + 2 \mfk c_1 \int ( 2 \vert \partial_\xi h_\mrm s \vert^2 - 4 h_\mrm s \partial_{\xi\xi} h_\mrm s + h_\mrm s^2 \bigr) \vert \theta_{\mrm l, \xi} \vert^2 \,\idx,
    \end{aligned}
\end{align}
satisfying, thanks to \eqref{lnest:008-1},
\begin{gather}\label{lnest:010}
    \begin{gathered}
    \mathcal E_0 \geq \dfrac{1}{2} \int h_\mrm s \vert \theta_{\mrm l, t} \vert^2 \,\idx +  \int ( 2 \vert \partial_\xi h_\mrm s\vert^2 - 4 h_\mrm s \partial_{\xi\xi}h_\mrm s + h_\mrm s^2) \vert \theta_{\mrm l, \xi} \vert^2 \,\idx \\
    \qquad +  \int \vert h_\mrm s \vert^2 \vert \theta_{\mrm l, \xi\xi} \vert^2 \,\idx + \mfk c_1 \int h_\mrm s \vert \theta_{\mrm l, \xi} \vert^2 \,\idx \\ 
    - \mfk c_1 \biggl( \int h_\mrm s \vert \theta_{\mrm l,t} \vert^2 \,\idx \biggr)^{1/2} \cdot \biggl( \int h_\mrm s \vert \theta_{\mrm l} \vert^2 \,\idx \biggr)^{1/2}\\
    \geq ( \dfrac{1}{2} - \dfrac{\mfk c_1}{4 \sigma} ) \int h_\mrm s \vert \theta_{\mrm l, t} \vert^2 \,\idx +  \int ( 2 \vert \partial_\xi h_\mrm s\vert^2 - 4 h_\mrm s \partial_{\xi\xi}h_\mrm s + h_\mrm s^2) \vert \theta_{\mrm l, \xi} \vert^2 \,\idx \\
    \qquad +  \int \vert h_\mrm s \vert^2 \vert \theta_{\mrm l, \xi\xi} \vert^2 \,\idx + \mfk c_1 \biggl( \int h_\mrm s \vert \theta_{\mrm l, \xi} \vert^2 \,\idx - \sigma \int h_\mrm s \vert \theta_\mrm l \vert^2 \,\idx \biggr) \\
    \gtrsim \norm{h_\mrm s^{1/2} \theta_{\mrm l, t}}{L^2}^2  + \norm{\theta_{\mrm l, \xi}}{L^2}^2 + \norm{h_\mrm s \theta_{\mrm l, \xi\xi}}{L^2}^2 + \norm{h_\mrm s^{1/2} \theta_{\mrm l, \xi}}{L^2}^2\end{gathered} \\
    \intertext{and}
    \label{lnest:011}
    \begin{gathered} 
        \mathcal D_0 \gtrsim \norm{h_\mrm s^{1/2} \theta_{\mrm l, \xi t}}{L^2}^2 + \norm{h_\mrm s \theta_{\mrm l, \xi\xi}}{L^2}^2 + \norm{\theta_{\mrm l, \xi}}{L^2}^2 \gtrsim \mathcal E_0. 
    \end{gathered}
    \end{gather}
Therefore, \eqref{lnest:009} yields the exponential decay in time, i.e., 
\begin{gather}
    \label{lnest:012}
    \norm{\theta_{\mrm l}(t)}{L^\infty}^2 \lesssim \norm{h_\mrm s^{1/2} \theta_{\mrm l, t}(t)}{L^2}^2  + \norm{\theta_{\mrm l, \xi}(t)}{L^2}^2 + \norm{h_\mrm s \theta_{\mrm l, \xi\xi}(t)}{L^2}^2 \lesssim e^{- \lambda_0 t}\\
    \intertext{and}
    \label{lnest:013}
    \int e^{\lambda_0 s} \bigl\lbrace \norm{h_\mrm s^{1/2} \theta_{\mrm l, \xi t}(s)}{L^2}^2 + \norm{h_\mrm s \theta_{\mrm l, \xi\xi}(s)}{L^2}^2 + \norm{\theta_{\mrm l, \xi}(s)}{L^2}^2 \bigr\rbrace \,\mrm{d}s\lesssim 1. 
\end{gather}
for some $ \lambda_0 \in (0,\infty) $.

\subsection{Linear Elliptic estimates}\label{sec:linear-elliptic}

To estimate the nonlinearities in the original equation \eqref{lgeq:prtbtn}, it is important to capture the higher-order estimates. Repeating the same arguments as in \eqref{lnest:004}, one can easily derive that the same estimates of the types \eqref{lnest:012} and \eqref{lnest:013} for $ \theta_{\mrm l} $ replaced by $ \partial_t^k \theta_{\mrm l} $, for any $ k \in \mathbb N $, hold, i.e., 
\begin{gather}
    \label{lnest:014}
    \norm{h_\mrm s^{1/2} \partial_t^{k+1}\theta_{\mrm l}(t)}{L^2}^2  + \norm{\partial_t^k \theta_{\mrm l, \xi}(t)}{L^2}^2 + \norm{h_\mrm s \partial_t^k \theta_{\mrm l, \xi\xi}(t)}{L^2}^2 \leq C_k e^{- \lambda_k t} \\
    \intertext{and}
    \int e^{\lambda_k s} \bigl\lbrace \norm{h_\mrm s^{1/2} \partial_t^{k+1} \theta_{\mrm l, \xi}(s)}{L^2}^2 + \norm{h_\mrm s \partial_t^k \theta_{\mrm l, \xi\xi}(s)}{L^2}^2 + \norm{\partial_t^k \theta_{\mrm l, \xi}(s)}{L^2}^2 \bigr\rbrace \,\mrm{d}s\leq C_k
    \label{lnest:015}
\end{gather}
for some $ \lambda_k, C_k \in (0,\infty) $. Without loss of generality, we assume that $ 0 < \lambda_0 \leq \lambda_1 \leq \lambda_2 \leq \cdots $. 

\bigskip 

Next, we aim to show the estimates by shifting the time derivatives to space derivatives, i.e., the Elliptic estimates of \eqref{lgeq:lnr_sys-1}.

Let
\begin{equation}\label{lnest:016}
    m_\mrm s:= 2 \vert \partial_\xi h_\mrm s \vert^2 - 4 h_\mrm s \partial_{\xi\xi}h_\mrm s + h_\mrm s^2  > 0.
\end{equation}
Then \eqref{lnest:007} can be written as
\begin{equation}\label{lnest:017}
    h_\mrm s \partial_t \theta_{\mrm l, \xi} - \partial_\xi (h_\mrm s^2 \theta_{\mrm l, \xi\xi}) + m_\mrm s \theta_{\mrm l, \xi} = \dfrac{1}{2} \int_{-1}^\xi (h_\mrm s \partial_{tt} \theta_\mrm l) (\sigma) \,d\sigma.
\end{equation}

\subsubsection{Estimate of $ \theta_{\mrm l, \xi\xi\xi} $}

Let $ \varepsilon \in (0,1) $ be an arbitrarily small constant. Unless stated explicitly, are independent of $ \varepsilon $.

\bigskip

After rearranging the equation,  
\eqref{lnest:017} can be written as
\begin{equation}\label{lnest:051}
    \begin{gathered}
    - \partial_{\xi}(\hs^2 \theta_{\mrm l, \xi\xi}) + 2 \vert \hs' \vert^2 \theta_{\mrm l, \xi} = (4 \hs \hs'' - \hs^2)\theta_{\mrm l, \xi} - \hs \dt \theta_{\mrm l, \xi} \\
    +\dfrac{1}{2} \int_{-1}^\xi (h_\mrm s \partial_{tt} \theta_\mrm l) (\sigma) \,d\sigma.
    \end{gathered}
\end{equation}
Thanks to the fact that
$$
\int_{-1}^1 \hs \partial_{tt}\theta_{\mrm l} \idx = 0,
$$
one can write
\begin{equation}\label{lnest:017-1}
    \begin{gathered}
    \vert \int_{-1}^\xi (h_\mrm s \partial_{tt} \theta_\mrm l) (\sigma) \,d\sigma \vert  
    =
    \begin{cases}
        \vert \int_{-1}^\xi (h_\mrm s \partial_{tt} \theta_\mrm l) (\sigma) \,d\sigma \vert  & \xi \leq 0,\\
        \vert - \int_{\xi}^1 (h_\mrm s \partial_{tt} \theta_\mrm l) (\sigma) \,d\sigma \vert   & \xi > 0
    \end{cases}
    \\
    \lesssim  \hs \norm{\hs^{1/2} \partial_{tt}\theta_{\mrm l}}{L^2},
    \end{gathered}
\end{equation}
where the last inequality follows by applying H\"older's inequality in the two intervals $ (-1,0) $ and $ (0,1) $, and using the fact that $ \hs(\xi) \simeq (1+\xi)(1-\xi) $.

\bigskip

After dividing \eqref{lnest:051} with $ \hs^{1/2} $, taking the square of both sides and integrating the resultant, i.e., $ \norm{\dfrac{\eqref{lnest:051}}{\hs^{1/2}}}{L^2}^2 $, yield, thanks to \eqref{lnest:017-1},
\begin{equation}\label{lnest:071}
    \begin{gathered}
        \norm{\dfrac{- \partial_{\xi}(\hs^2 \theta_{\mrm l, \xi\xi}) + 2 \vert \hs' \vert^2 \theta_{\mrm l, \xi}}{\hs^{1/2}}}{L^2}^2 \lesssim \norm{\theta_{\mrm l, \xi}}{L^2}^2 + \norm{\dt \theta_{\mrm l, \xi}}{L^2}^2 \\
        + \norm{\hs^{1/2} \partial_{tt} \theta_{\mrm l}}{L^2}^2.
    \end{gathered}
\end{equation}
The left hand side of \eqref{lnest:071} can be calculated as below:
\begin{equation}\label{lnest:072}
    \begin{gathered}
        \norm{\dfrac{- \partial_{\xi}(\hs^2 \theta_{\mrm l, \xi\xi}) + 2 \vert \hs' \vert^2 \theta_{\mrm l, \xi}}{\hs^{1/2}}}{L^2}^2 = 4 \norm{\dfrac{\vert \hs' \vert^2 \theta_{\mrm l, \xi}}{\hs^{1/2}}}{L^2}^2 + \norm{\dfrac{\partial_{\xi}(\hs^2 \theta_{\mrm l, \xi\xi})}{\hs^{1/2}}}{L^2}^2\\
        - 4 \int \dfrac{\partial_{\xi}(\hs^2 \theta_{\mrm l, \xi\xi}) \cdot \vert \hs' \vert^2 \theta_{\mrm l, \xi}}{\hs} \idx = 4 \norm{\dfrac{\vert \hs' \vert^2 \theta_{\mrm l, \xi}}{\hs^{1/2}}}{L^2}^2 + \norm{\hs^{3/2} \theta_{\mrm l, \xi\xi\xi}}{L^2}^2 \\
        + 4 \norm{\hs^{1/2} \hs' \theta_{\mrm l, \xi\xi}}{L^2}^2 + 4 \int \hs^2 \hs' \theta_{\mrm l, \xi\xi} \theta_{\mrm l,\xi\xi\xi} \idx + 4 \int \hs^2 \theta_{\mrm l, \xi\xi} \cdot \partial_{\xi} \biggl( \dfrac{\vert\hs'\vert^2 \theta_{\mrm l, \xi}}{\hs} \biggr) \idx\\
        =  4 \norm{\dfrac{\vert \hs' \vert^2 \theta_{\mrm l, \xi}}{\hs^{1/2}}}{L^2}^2 + \norm{\hs^{3/2} \theta_{\mrm l, \xi\xi\xi}}{L^2}^2 + 4 \norm{\hs^{1/2} \hs' \theta_{\mrm l, \xi\xi}}{L^2}^2  \\
        - 2 \int \hs^2 \hs'' \vert \theta_{\mrm l,\xi\xi}\vert^2 \idx + 4 \int \bigr(2 \hs \hs' \hs'' - (\hs')^3\bigr) \theta_{\mrm l, \xi} \theta_{\mrm l, \xi\xi} \idx \\
        = 4 \norm{\dfrac{\vert \hs' \vert^2 \theta_{\mrm l, \xi}}{\hs^{1/2}}}{L^2}^2 + \norm{\hs^{3/2} \theta_{\mrm l, \xi\xi\xi}}{L^2}^2 + 4 \norm{\hs^{1/2} \hs' \theta_{\mrm l, \xi\xi}}{L^2}^2  \\
        - 2 \int \hs^2 \hs'' \vert \theta_{\mrm l,\xi\xi}\vert^2 \idx - 2 \int \bigr(2 \hs \hs' \hs'' - (\hs')^3\bigr)' \vert \theta_{\mrm l, \xi} \vert^2 \idx.
    \end{gathered}
\end{equation}
Therefore, \eqref{lnest:071} and \eqref{lnest:072} yield
\begin{equation}\label{lnest:3rd-d-1}
    \begin{gathered}
    \norm{\hs^{3/2} \theta_{\mrm l, \xi\xi\xi}}{L^2}^2 + \norm{\hs^{1/2} \theta_{\mrm l, \xi\xi}}{L^2}^2 + \norm{\dfrac{\theta_{\mrm l, \xi}}{\hs^{1/2}}}{L^2}^2 \lesssim \norm{\theta_{\mrm l, \xi}}{L^2}^2 + \norm{\dt \theta_{\mrm l, \xi}}{L^2}^2 \\
    + \norm{\hs^{1/2} \partial_{tt} \theta_{\mrm l}}{L^2}^2. 
    \end{gathered}
\end{equation}

\bigskip 

On the other hand, after dividing \eqref{lnest:051} with $ \hs + \varepsilon $, taking the square of both sides and integrating the resultant, i.e., $ \norm{\dfrac{\eqref{lnest:051}}{\hs+\varepsilon}}{L^2}^2 $, yield, thanks to \eqref{lnest:017-1},
\begin{equation}\label{lnest:052}
    \begin{gathered}
        \norm{\dfrac{- \partial_{\xi}(\hs^2 \theta_{\mrm l, \xi\xi}) + 2 \vert \hs' \vert^2 \theta_{\mrm l, \xi}}{\hs + \varepsilon}}{L^2}^2 \lesssim \norm{\theta_{\mrm l, \xi}}{L^2}^2 + \norm{\dt \theta_{\mrm l, \xi}}{L^2}^2 \\
        + \norm{\hs^{1/2} \partial_{tt} \theta_{\mrm l}}{L^2}^2.
    \end{gathered}
\end{equation}
Meanwhile, the left hand side of \eqref{lnest:052} can be calculated as below:
\begin{equation}\label{lnest:053}
    \begin{gathered}
        \norm{\dfrac{- \partial_{\xi}(\hs^2 \theta_{\mrm l, \xi\xi}) + 2 \vert \hs' \vert^2 \theta_{\mrm l, \xi}}{\hs + \varepsilon}}{L^2}^2 = 4 \norm{\dfrac{\vert\hs'\vert^2 \theta_{\mrm l, \xi}}{\hs + \varepsilon}}{L^2}^2 + \norm{\dfrac{\partial_\xi(\hs^2 \theta_{\mrm l, \xi\xi})}{\hs+\varepsilon}}{L^2}^2 \\
        \underbrace{- 4 \int \dfrac{\partial_\xi(\hs^2 \theta_{\mrm l, \xi\xi}) \cdot \vert \hs' \vert^2 \theta_{\mrm l, \xi}}{(\hs + \varepsilon)^2} \idx}_{I_{01}}.
    \end{gathered}
\end{equation}
To calculate $ I_{01} $, applying integration by parts yields
\begin{equation}
    \begin{gathered}
        I_{01} = 4 \int \hs^2 \theta_{\mrm l, \xi\xi} \cdot \partial_\xi \biggl( \dfrac{\vert \hs' \vert^2 \theta_{\mrm l, \xi}}{(\hs+\varepsilon)^2} \biggr) \idx = 4 \int \dfrac{\hs^2 \vert \hs'\vert^2 \vert\theta_{\mrm l,\xi\xi}\vert^2}{(\hs+\varepsilon)^2}\idx \\
        + 4 \int  \biggl(\dfrac{2 \hs^2 \hs' \hs''}{(\hs +\varepsilon)^2} - \dfrac{2 \hs^2 ( \hs' )^3}{(\hs+\varepsilon)^3} \biggr) \theta_{\mrm l, \xi\xi} \theta_{\mrm l, \xi}  \idx = 4 \int \dfrac{\hs^2 \vert \hs'\vert^2 \vert\theta_{\mrm l,\xi\xi}\vert^2}{(\hs+\varepsilon)^2}\idx \\
        - 4 \int \biggl( \dfrac{\hs^2 \hs' \hs''}{(\hs +\varepsilon)^2} - \dfrac{\hs^2 ( \hs')^3}{(\hs+\varepsilon)^3} \biggr)' \vert\theta_{\mrm l, \xi}\vert^2 \idx\\
        = 4 \int \dfrac{\hs^2 \vert \hs'\vert^2 \vert\theta_{\mrm l,\xi\xi}\vert^2}{(\hs+\varepsilon)^2}\idx + 4 \int \biggl( \dfrac{- 2 \hs \vert\hs'\vert^2 \hs'' - \hs^2 \vert\hs''\vert^2 - \hs^2 \hs' \hs'''}{(\hs+\varepsilon)^2} \\
        + \dfrac{2 \hs \vert \hs' \vert^4 + 5 \hs^2 \vert \hs'\vert^2 \hs''}{(\hs+\varepsilon)^3} - \dfrac{3\hs^2 \vert \hs' \vert^4}{(\hs+\varepsilon)^4} \biggr) \vert\theta_{\mrm l, \xi}\vert^2 \idx.
    \end{gathered}
\end{equation}
Sending $ \varepsilon \rightarrow 0 $, thanks to \eqref{eq:equilibrium-1}, leads to 
\begin{equation}\label{lnest:054}
    \begin{gathered}
    \lim_{\varepsilon\rightarrow0^+} I_{01} = 4 \int \vert\hs'\vert^2 \vert\theta_{\mrm l, \xi\xi}\vert^2 \idx \\
    + 4 \int \bigl( - \vert \hs'' \vert^2 - \hs' \hs + \dfrac{3\vert\hs'\vert^2 \hs''}{\hs} - \dfrac{\vert\hs'\vert^4}{\hs^2} \bigr) \vert \theta_{\mrm l, \xi} \vert^2 \idx.
    \end{gathered}
\end{equation}
Therefore sending $ \varepsilon \rightarrow 0^+ $ in \eqref{lnest:052} and \eqref{lnest:053} yields
\begin{equation}\label{lnest:055}
    \begin{gathered}
         4 \int \bigl( - \vert \hs'' \vert^2 - \hs' \hs + \dfrac{3\vert\hs'\vert^2 \hs''}{\hs}  \bigr) \vert \theta_{\mrm l, \xi} \vert^2 \idx + 4 \norm{\hs' \theta_{\mrm l, \xi\xi}}{L^2}^2 + \underbrace{\norm{\dfrac{\partial_\xi(\hs^2 \theta_{\mrm l, \xi\xi})}{\hs}}{L^2}^2}_{I_{02}}\\ \lesssim \norm{\theta_{\mrm l, \xi}}{L^2}^2 + \norm{\dt \theta_{\mrm l, \xi}}{L^2}^2 
        + \norm{\hs^{1/2} \partial_{tt} \theta_{\mrm l}}{L^2}^2.
    \end{gathered}
\end{equation}
Further more, $ I_{02} $ in \eqref{lnest:055} can be calculated as
\begin{equation}
    \begin{gathered}
        I_{02} = \norm{\hs \theta_{\mrm l, \xi\xi\xi}}{L^2}^2 + 4 \norm{\hs' \theta_{\mrm l, \xi\xi}}{L^2}^2 + 4 \int \hs \hs' \theta_{\mrm l, \xi\xi} \theta_{\mrm l, \xi\xi\xi} \idx \\
        = \norm{\hs \theta_{\mrm l, \xi\xi\xi}}{L^2}^2 + 2 \norm{\hs' \theta_{\mrm l, \xi\xi}}{L^2}^2 - 2 \int \hs \hs'' \vert \theta_{\mrm l, \xi\xi} \vert^2 \idx.
    \end{gathered}
\end{equation}
Therefore, \eqref{lnest:055} implies
\begin{equation}\label{lnest:3rd-d-2}
    \norm{\hs \theta_{\mrm l, \xi\xi\xi}}{L^2}^2 +  \norm{\theta_{\mrm l, \xi\xi}}{L^2}^2 \lesssim \norm{\dfrac{\theta_{\mrm l, \xi}}{\hs^{1/2}}}{L^2}^2 + \norm{\dt \theta_{\mrm l, \xi}}{L^2}^2 
        + \norm{\hs^{1/2} \partial_{tt} \theta_{\mrm l}}{L^2}^2,
\end{equation}
which, together with \eqref{lnest:3rd-d-1}, implies
\begin{equation}\label{lnest:3rd-d-3}
    \norm{\hs \theta_{\mrm l, \xi\xi\xi}}{L^2}^2 +  \norm{\theta_{\mrm l, \xi\xi}}{L^2}^2 + \norm{\dfrac{\theta_{\mrm l, \xi}}{\hs^{1/2}}}{L^2}^2 \lesssim \norm{\theta_{\mrm l, \xi}}{L^2}^2 + \norm{\dt \theta_{\mrm l, \xi}}{L^2}^2 
    + \norm{\hs^{1/2} \partial_{tt} \theta_{\mrm l}}{L^2}^2.
\end{equation}

\smallskip

To sum up, thanks to \eqref{lnest:014} and \eqref{lnest:015} with $ k = 1 $, \eqref{lnest:3rd-d-3} implies
\begin{equation}\label{lnest:3rd-d-ttl}
    \begin{gathered}
        e^{\lambda_1 t} \biggr( \norm{\hs \theta_{\mrm l, \xi\xi\xi}(t)}{L^2}^2 +  \norm{\theta_{\mrm l, \xi\xi}(t)}{L^2}^2 + \norm{\dfrac{\theta_{\mrm l, \xi}(t)}{\hs^{1/2}}}{L^2}^2 \biggr)\\
        + \int_0^t e^{\lambda_1 s} \biggr( \norm{\hs \theta_{\mrm l, \xi\xi\xi}(s)}{L^2}^2 +  \norm{\theta_{\mrm l, \xi\xi}(s)}{L^2}^2 + \norm{\dfrac{\theta_{\mrm l, \xi}(s)}{\hs^{1/2}}}{L^2}^2 \biggr)\,\mrm{d}s\lesssim C_1.
    \end{gathered}
\end{equation}

\bigskip 

Repeating the same argument with $ \theta_{\mrm l} $ replaced by $ \dt \theta_{\mrm l} $, one can conclude that
\begin{equation}\label{lnest:034}
    \begin{gathered}
        e^{\lambda_2 t} \biggr( \norm{\hs \dt \theta_{\mrm l, \xi\xi\xi}(t)}{L^2}^2 +  \norm{\dt \theta_{\mrm l, \xi\xi}(t)}{L^2}^2 + \norm{\dfrac{\dt \theta_{\mrm l, \xi}(t)}{\hs^{1/2}}}{L^2}^2 \biggr)\\
        + \int_0^t e^{\lambda_2 s} \biggr( \norm{\hs \dt \theta_{\mrm l, \xi\xi\xi}(s)}{L^2}^2 +  \norm{\dt \theta_{\mrm l, \xi\xi}(s)}{L^2}^2 + \norm{\dfrac{\dt \theta_{\mrm l, \xi}(s)}{\hs^{1/2}}}{L^2}^2 \biggr)\,\mrm{d}s\lesssim C_2.
    \end{gathered}
\end{equation}
In particular, this implies that
\begin{equation}\label{lnest:035}
    \norm{\dt \theta_{\mrm l, \xi}(t)}{L^\infty}^2 \lesssim \norm{\dt \theta_{\mrm l, \xi\xi}(t)}{L^2}^2 \lesssim C_2 e^{-\lambda_2 t}, 
\end{equation}
and therefore the flow map is invertible. Moreover, the linearized system is asymptotically stable.

\subsubsection{Estimate of $ \theta_{\mrm l, \xi\xi\xi\xi} $}



From \eqref{lgeq:lnr_sys-1}, one can write
\begin{equation}\label{lnest:040}
    \begin{gathered}
        \hs^{2}\theta_{\mrm l, \xi\xi\xi\xi} + 4 \hs \hs' \theta_{\mrm l, \xi\xi\xi} = - (\hs^2)'' \theta_{\mrm l, \xi\xi} + \partial_\xi\lbrack (2 \vert \partial_\xi h_\mrm s \vert^2 - 4 h_\mrm s \partial_{\xi\xi} h_\mrm s) \theta_{\mrm l, \xi} \rbrack \\
        + (\hs^2 \theta_{\mrm l, \xi})_\xi + \partial_\xi(\hs \theta_{\mrm l, \xi t}) - \dfrac{1}{2} \hs \partial_{tt}\theta_{\mrm l} \\
        = \bigl( - 6 \hs \hs'' + \hs^2 \bigr) \theta_{\mrm l, \xi\xi} + \bigl( - 4 \hs \hs''' + 2 \hs \hs' \bigr) \theta_{\mrm l, \xi} \\
        + \hs \theta_{\mrm l, \xi\xi t} + \hs' \theta_{\mrm l, \xi t} - \dfrac{1}{2} \hs \partial_{tt} \theta_{\mrm l}.
    \end{gathered}
\end{equation}
Therefore, after dividing \eqref{lnest:040} with $ \hs^{1/2} $ and taking the $ L^2 $-norm of the resultant, one has that
\begin{equation}\label{lnest:041}
    \begin{gathered}
        \norm{\dfrac{\hs^{2}\theta_{\mrm l, \xi\xi\xi\xi} + 4 \hs \hs' \theta_{\mrm l, \xi\xi\xi}}{\hs^{1/2}}}{L^2}^2 \lesssim \norm{\hs^{1/2} \theta_{\mrm l, \xi\xi}}{L^2}^2 + \norm{\hs^{1/2}\theta_{\mrm l, \xi}}{L^2}^2 \\ + \norm{\hs^{1/2} \theta_{\mrm l, \xi\xi t}}{L^2}^2 
        + \norm{\dfrac{\theta_{\mrm l, \xi t}}{\hs^{1/2}}}{L^2}^2 + \norm{\hs^{1/2} \partial_{tt}\theta_{\mrm l}}{L^2}^2.
    \end{gathered}
\end{equation}
Meanwhile, the left hand side of \eqref{lnest:041} can be calculated as below:
\begin{equation}\label{lnest:042}
    \begin{gathered}
        \norm{\dfrac{\hs^{2}\theta_{\mrm l, \xi\xi\xi\xi} + 4 \hs \hs' \theta_{\mrm l, \xi\xi\xi}}{\hs^{1/2}}}{L^2}^2 = \norm{\hs^{3/2}\theta_{\mrm l, \xi\xi\xi\xi}}{L^2}^2 + 16 \norm{\hs^{1/2}\hs' \theta_{\mrm l, \xi\xi\xi}}{L^2}^2 \\
        + 8 \int \hs^2 \hs' \theta_{\mrm l, \xi\xi\xi} \theta_{\mrm l, \xi\xi\xi\xi} \idx = \norm{\hs^{3/2}\theta_{\mrm l, \xi\xi\xi\xi}}{L^2}^2 + 16 \norm{\hs^{1/2}\hs' \theta_{\mrm l, \xi\xi\xi}}{L^2}^2 \\
        - 4 \int (\hs^2 \hs')' \vert \theta_{\mrm l, \xi\xi\xi} \vert^2 \idx = \norm{\hs^{3/2}\theta_{\mrm l, \xi\xi\xi\xi}}{L^2}^2 \\
        + \int \bigl( 8 \hs (\hs')^2 - 4 \hs^2 \hs''\bigr) \vert \theta_{\mrm l,\xi\xi\xi} \vert^2 \idx \gtrsim \norm{\hs^{3/2}\theta_{\mrm l, \xi\xi\xi\xi}}{L^2}^2 + \norm{\hs^{1/2}\theta_{\mrm l, \xi\xi\xi}}{L^2}^2.
    \end{gathered}
\end{equation}
Therefore, thanks to \eqref{lnest:014}, \eqref{lnest:015}, \eqref{lnest:3rd-d-ttl}, and \eqref{lnest:034},
\eqref{lnest:041} and \eqref{lnest:042} imply
\begin{equation}\label{lnest:043}
    \begin{gathered}
        e^{\lambda_2 t} \bigl( \norm{\hs^{3/2}\theta_{\mrm l, \xi\xi\xi\xi}(t)}{L^2}^2 + \norm{\hs^{1/2}\theta_{\mrm l, \xi\xi\xi}(t)}{L^2}^2 \bigr) \\
        + \int e^{\lambda_2 s} \bigl( \norm{\hs^{3/2}\theta_{\mrm l, \xi\xi\xi\xi}(s)}{L^2}^2 + \norm{\hs^{1/2}\theta_{\mrm l, \xi\xi\xi}(s)}{L^2}^2 \bigr) \,\mrm{d}s\lesssim C_2.
    \end{gathered}
\end{equation}

\section{Nonlinear elliptic estimates}\label{sec:nonlinear_estimates}

In this section, we demonstrate the nonlinear elliptic estimates for the solution to \eqref{lgeq:prtbtn-csv}, i.e., shifting the temporal derivative to the spatial derivation. Let 
\begin{equation}\label{def:energy_nonlinear_1}
    \mathcal E_{\mrm{NL},1} := \sum_{k=0,1,2} \biggl\lbrace \norm{\hs^{1/2} \partial_t^{k+1} \theta}{L^2}^2 + \norm{\hs \partial_t^k \theta_{\xi\xi}}{L^2}^2 + \norm{\partial_t^k \theta_\xi}{L^2}^2 \biggr\rbrace.
\end{equation}
We will show that
\begin{equation}\label{def:energy_nonlinear_2}
    \begin{gathered}
        \mathcal E_{\mrm{NL},2} := \norm{\hs^{3/2} \theta_{\xi\xi\xi\xi}}{L^2}^2 + \norm{\hs^{1/2} \theta_{\xi\xi\xi}}{L^2}^2 \\
        + \sum_{k=0,1} \biggl\lbrace \norm{\hs \dt^k \theta_{\xi\xi\xi}}{L^2}^2 + \norm{\dt^k \theta_{\xi\xi}}{L^2}^2 + \norm{\dfrac{\dt^k \theta_{\xi}}{\hs^{1/2}}}{L^2}^2\biggr\rbrace, 
    \end{gathered}
\end{equation}
i.e, the (weighted) estimates of $ \theta_{\xi\xi\xi\xi} $, $ \partial_t \theta_{\xi\xi\xi} $, and $ \theta_{\xi\xi\xi} $, can be bounded in terms of $ \mathcal E_{\mrm{NL},1} $ provided that $ \mathcal E_{\mrm{NL},2} $ is small.

\smallskip

We first rewrite the equation \eqref{lgeq:prtbtn-csv}, by separating the linear and nonlinear parts. Notice that
\begin{equation}\label{id:nonlinearities-1}
    \dfrac{1}{1+\theta_\xi} = 1 - \theta_\xi + \underbrace{\dfrac{\theta_\xi^2}{1+\theta_\xi}}_{=:g_1(\theta_\xi)}
    \quad \text{and} \quad
    \dfrac{1}{(1+\theta_\xi)^2} = 1 - 2 \theta_\xi + \underbrace{\dfrac{2\theta_\xi + 3}{(1+\theta_\xi)^2} \theta_\xi^2}_{=:g_2(\theta_\xi)},
\end{equation}
where, for small $ \theta_\xi $,
\begin{equation}\label{id:nonlinearities-2}
    g_1(\theta_\xi), g_2(\theta_\xi) = \mathcal O(\theta_\xi^2).
\end{equation}
With these notations, \eqref{lgeq:prtbtn-csv} can be written as
\begin{equation}\label{eq:lin+nonlin}
    \begin{gathered}
        \hs \partial_{tt} \theta - 2 \bigl\lbrack (\hs^2 + 2 \vert \hs' \vert^2 - 4 \hs \hs'') \theta_\xi \bigr\rbrack_\xi + 2 \bigl\lbrack \hs^2 \theta_{\xi\xi} \bigr\rbrack_{\xi\xi} - 2 \bigl( \hs \theta_{\xi t} \bigr)_\xi \\
        + \bigl\lbrack N_1 \bigr\rbrack_\xi - 2 \bigl\lbrack \hs ( - 2\theta_\xi + g_2(\theta_\xi)) \theta_{\xi t} \bigr\rbrack_\xi = 0, 
    \end{gathered}
\end{equation}
where
\begin{equation}\label{id:nonlinearity-3}
    \begin{gathered}
        N_1 := \hs^2 g_2(\theta_\xi) - 4 \hs \theta_\xi \bigl[ \hs' \theta_\xi + (\hs \theta_\xi)_\xi  \bigr]_\xi \\
        - 2 \hs (1-2\theta_\xi + g_2(\theta_\xi)) \bigl\lbrace \theta_\xi (\hs \theta_\xi)_\xi + (1-\theta_\xi +g_1(\theta_\xi)) [\hs g_1(\theta_\xi)]_\xi \\
        + g_1(\theta_\xi) [\hs(1-\theta_\xi)]_\xi \bigr\rbrace_\xi \\
        - 2 \hs g_2(\theta_\xi) \bigl\lbrack \hs' - \hs' \theta_\xi - (\hs \theta_\xi)_\xi \bigr\rbrack_\xi + \bigl\lbrack \hs' \theta_\xi + (\hs \theta_\xi)_\xi \bigr\rbrack^2 \\
        + \bigl\lbrace 2\hs' - 2\hs' \theta_\xi - 2(\hs \theta_\xi)_\xi + \theta_\xi (\hs\theta_\xi)_\xi +(1-\theta_\xi + g_1(\theta_\xi))(\hs g_1(\theta_\xi))_\xi \\ + g_1(\theta_\xi)[\hs (1-\theta_\xi)]_\xi \bigr\rbrace 
        \times 
        \bigl\lbrace \theta_\xi (\hs \theta_\xi)_\xi + (1-\theta_\xi + g_1(\theta_\xi))[\hs g_1(\theta_\xi)]_\xi\\ + g_1(\theta_\xi) [\hs(1-\theta_\xi)]_\xi \bigr\rbrace,
    \end{gathered}
\end{equation}
or, by denoting $ f_i = f_i(y) \in C^\infty(-1/2,1/2) $, $ i = 1,2,\cdots,6 $, 
\begin{equation}\label{id:nonlinearity-4}
    \begin{gathered}
    N_1 = \hs^2 \bigl\lbrace f_1(\theta_\xi) \theta_\xi^2 + f_2(\theta_\xi) \theta_\xi \theta_{\xi\xi\xi} + f_3(\theta_\xi) \theta_{\xi\xi} \theta_{\xi\xi} \bigr\rbrace \\
    + \hs \hs' f_4(\theta_\xi) \theta_\xi \theta_{\xi\xi} + (\hs')^2 f_5(\theta_\xi) \theta_\xi^2 + \hs \hs'' f_6(\theta_\xi) \theta_{\xi}^2.
    \end{gathered}
\end{equation}
To simplify the presentation, hereafter we use
\begin{equation}\label{id:nonlinearity-5}
    f = f(y) \in C^\infty (-1/2,1/2) \quad \text{and} \quad F = F(y) \geq 0 \in C^\infty [0,\infty)
\end{equation}
to denote the smooth functions of the argument, which is different from line to line.

\subsection{Embedding inequalities}
We summarize the weighted-$ L^p $ embedding inequalities used in this section. These inequalities are consequences of Hardy's inequalities (Lemma \ref{lm:hardy-p}) and the Sobolev embedding inequalities. 

\begin{lem}\label{lm:embedding-1}The following inequalities hold:
    \begin{equation}\label{ineq:001}
        \begin{gathered}
            \norm{\theta_\xi}{L^\infty} \lesssim  \norm{\theta_{\xi\xi}}{L^2} \lesssim \norm{\hs \theta_{\xi\xi\xi}}{L^2} + \norm{\hs \theta_{\xi\xi}}{L^2}, \\
            \norm{\theta_{\xi t}}{L^\infty} \lesssim  \norm{\theta_{\xi\xi t}}{L^2} \lesssim \norm{\hs \theta_{\xi\xi\xi t}}{L^2} + \norm{\hs \theta_{\xi\xi t}}{L^2}, \\
            \norm{\frac{\theta_{\xi t}}{\hs^{1/2}} }{L^2}\lesssim  \norm{\frac{\theta_{\xi t}}{\hs}}{L^2} \lesssim  \norm{\theta_{\xi\xi t}}{L^2} \lesssim \norm{\hs \theta_{\xi\xi\xi t}}{L^2} + \norm{\hs \theta_{\xi\xi t}}{L^2},\\
            \norm{\hs \theta_{\xi\xi}}{L^\infty} \lesssim \norm{\hs \theta_{\xi\xi\xi}}{L^2} + \norm{\theta_{\xi\xi}}{L^2}, \\
            \norm{\hs \theta_{\xi\xi t}}{L^\infty} \lesssim \norm{\hs \theta_{\xi\xi\xi t}}{L^2} + \norm{\theta_{\xi\xi t}}{L^2},\\
            \norm{\hs^{1/2} \theta_{\xi\xi}}{L^\infty} \lesssim \norm{\hs^{1/2} \theta_{\xi\xi\xi}}{L^{3/2}} + \norm{\hs^{-1/2} \theta_{\xi\xi}}{L^{3/2}} \\
            \lesssim \norm{\hs^{1/2}\theta_{\xi\xi\xi}}{L^2}  + \norm{\theta_{\xi\xi}}{L^2}.
        \end{gathered}
    \end{equation}
\end{lem}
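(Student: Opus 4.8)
The plan is to reduce each line of \eqref{ineq:001} to one of three standard facts: (i) the one-dimensional Sobolev embeddings $H^1(I)\hookrightarrow L^\infty(I)$ and $W^{1,3/2}(I)\hookrightarrow L^\infty(I)$, or equivalently the fundamental theorem of calculus together with H\"older on the bounded interval $I=(-1,1)$; (ii) the Hardy inequalities of Lemma \ref{lm:hardy-p}, invoked near each endpoint $\xi=\pm1$ after recalling $\hs(\xi)\simeq(1+\xi)(1-\xi)$, so that the weight $\hs$ is comparable there to the distance to $\partial I$; and (iii) the elementary observations that $\hs,\hs',\hs''$ are bounded on $\bar I$ by \eqref{eq:equilibrium} and \eqref{eq:equilibrium-2}, that $\hs$ is bounded below on any compact subset of $I$, and that $\theta_\xi$, hence also $\theta_{\xi t}$, vanish at $\xi=\pm1$ by \eqref{def:perturbation}. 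By density it suffices to establish the inequalities for smooth $\theta$ with the stated boundary behaviour.

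For the pure $L^\infty$ bounds and the first $\lesssim$ on the last line I would argue directly. Since $\theta_\xi(\pm1)=0$, $\norm{\theta_\xi}{L^\infty}\le\norm{\theta_{\xi\xi}}{L^1}\lesssim\norm{\theta_{\xi\xi}}{L^2}$, and the same with an extra $\dt$; this gives the first inequalities on the first two lines and the $\theta_{\xi t}$ part of the third. For the fourth and fifth lines, set $\phi:=\hs\theta_{\xi\xi}$, so that $\phi_\xi=\hs'\theta_{\xi\xi}+\hs\theta_{\xi\xi\xi}$, and use $\norm{\phi}{L^\infty}\lesssim\norm{\phi}{L^2}+\norm{\phi_\xi}{L^2}\lesssim\norm{\theta_{\xi\xi}}{L^2}+\norm{\hs\theta_{\xi\xi\xi}}{L^2}$, bounding $\hs,\hs'$ by constants; the $\dt$-version is identical. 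For the first $\lesssim$ on the last line, set $\psi:=\hs^{1/2}\theta_{\xi\xi}$, so $\psi_\xi=\tfrac{1}{2}\hs^{-1/2}\hs'\theta_{\xi\xi}+\hs^{1/2}\theta_{\xi\xi\xi}$ and hence $\lvert\psi_\xi\rvert\lesssim\hs^{-1/2}\lvert\theta_{\xi\xi}\rvert+\hs^{1/2}\lvert\theta_{\xi\xi\xi}\rvert$; since $\hs^{1/2}$ is bounded one also has $\norm{\psi}{L^{3/2}}\lesssim\norm{\hs^{-1/2}\theta_{\xi\xi}}{L^{3/2}}$, so that $W^{1,3/2}(I)\hookrightarrow L^\infty(I)$ yields $\norm{\psi}{L^\infty}\lesssim\norm{\psi}{L^{3/2}}+\norm{\psi_\xi}{L^{3/2}}\lesssim\norm{\hs^{-1/2}\theta_{\xi\xi}}{L^{3/2}}+\norm{\hs^{1/2}\theta_{\xi\xi\xi}}{L^{3/2}}$.

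The remaining weighted bounds are where Hardy's inequality enters. The estimate $\norm{\theta_{\xi\xi}}{L^2}\lesssim\norm{\hs\theta_{\xi\xi\xi}}{L^2}+\norm{\hs\theta_{\xi\xi}}{L^2}$ (and its $\dt$-version) follows from \eqref{ineq:Hardy-3} with $p=2$, $k=1$ (so $k+\tfrac{1}{p}=\tfrac32>1$ and $(k-1)p=0>-1$), applied to $g=\theta_{\xi\xi}$ with $s=1-\xi$ (resp. $s=1+\xi$) near $\xi=1$ (resp. $\xi=-1$); on a fixed interior subinterval $\hs$ is bounded below, so there $\norm{\theta_{\xi\xi}}{L^2}\lesssim\norm{\hs\theta_{\xi\xi}}{L^2}$ trivially. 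For $\norm{\theta_{\xi t}/\hs}{L^2}\lesssim\norm{\theta_{\xi\xi t}}{L^2}$ I would use \eqref{ineq:Hardy-4} with $p=2$, $k=0$ (so $k+\tfrac{1}{p}=\tfrac12<1$), which is legitimate precisely because $\theta_{\xi t}$ vanishes at $\pm1$; the interior piece is controlled by the $L^\infty$ bound on $\theta_{\xi t}$ already obtained, and $\norm{\theta_{\xi t}/\hs^{1/2}}{L^2}\lesssim\norm{\theta_{\xi t}/\hs}{L^2}$ is immediate from boundedness of $\hs$. Finally, for the second $\lesssim$ on the last line, one downgrades $\norm{\hs^{1/2}\theta_{\xi\xi\xi}}{L^{3/2}}\lesssim\norm{\hs^{1/2}\theta_{\xi\xi\xi}}{L^2}$ by H\"older, and bounds $\norm{\hs^{-1/2}\theta_{\xi\xi}}{L^{3/2}}$ via \eqref{ineq:Hardy-3} with $p=\tfrac32$, $k=\tfrac12$ (so $k+\tfrac{1}{p}=\tfrac76>1$ and $(k-1)p=-\tfrac34>-1$), obtaining $\norm{\hs^{-1/2}\theta_{\xi\xi}}{L^{3/2}}\lesssim\norm{\hs^{1/2}\theta_{\xi\xi\xi}}{L^{3/2}}+\norm{\hs^{1/2}\theta_{\xi\xi}}{L^{3/2}}\lesssim\norm{\hs^{1/2}\theta_{\xi\xi\xi}}{L^2}+\norm{\theta_{\xi\xi}}{L^2}$ after a further H\"older step.

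I expect the main obstacle, modest as it is, to be precisely this last bound: the naive estimate $\norm{\hs^{-1/2}\theta_{\xi\xi}}{L^{3/2}}\lesssim\norm{\theta_{\xi\xi}}{L^2}$ is \emph{false}, since $\hs^{-3/2}$ is not locally integrable near $\partial I$, so one genuinely needs the derivative-gaining Hardy inequality \eqref{ineq:Hardy-3} at the fractional exponent $p=\tfrac32$, $k=\tfrac12$, and must verify that these exponents lie in its admissible range. A secondary technical point is that Lemma \ref{lm:hardy-p} is stated on $(0,1)$ with a one-sided weight, so in each application one should multiply the quantity of interest by a smooth cutoff localizing to a neighbourhood of a single endpoint before invoking the lemma, and then absorb the resulting commutator term, which is supported in the interior where $\hs$ is comparable to a constant, into the interior estimate.
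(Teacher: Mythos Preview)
Your proposal is correct and follows exactly the approach the paper takes: the paper's own proof is the single sentence ``This is a direct consequence of applying the Sobolev embedding inequalities and Hardy's inequalities,'' and you have simply fleshed out that sentence with the precise choices of $k$ and $p$ in Lemma~\ref{lm:hardy-p}, the localization near each endpoint using $\hs\simeq(1+\xi)(1-\xi)$, and the use of $\theta_\xi\vert_{\xi=\pm1}=0$. Your verification of the admissible range $k+\tfrac1p>1$ for the fractional case $p=\tfrac32$, $k=\tfrac12$ on the last line is exactly the care the one-line proof in the paper leaves to the reader.
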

\begin{proof}
    This is a direct consequence of applying the Sobolev embedding inequalities and Hardy's inequalities.
\end{proof}

\begin{lem}\label{lm:embedding-2}
    The following inequalities hold:
    \begin{gather}
        \norm{\dfrac{\theta_\xi}{\hs}}{L^4} + \norm{\theta_{\xi\xi}}{L^4} \lesssim \norm{\hs^{3/2} \theta_{\xi\xi\xi\xi}}{L^2} + \norm{\hs^{1/2} \theta_{\xi\xi\xi}}{L^2} + \norm{\theta_{\xi\xi}}{L^2}, \label{ineq:002}\\
        \norm{\hs^{1/2} \theta_{\xi tt}}{L^4} \lesssim \norm{\hs \theta_{\xi\xi tt}}{L^2} + \norm{\theta_{\xi tt}}{L^2}.
        \label{ineq:003}
    \end{gather}
\end{lem}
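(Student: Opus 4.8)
The plan is to derive both estimates from one–dimensional interpolation, absorbing the degeneracy of $\hs$ at $\xi=\pm1$ via Hardy's inequality (Lemma~\ref{lm:hardy-p}) and the pointwise bounds of Lemma~\ref{lm:embedding-1}. Throughout I use that $\hs$ is smooth, bounded above on $I$, has $\hs'$ bounded, and satisfies $\hs(\xi)\simeq(1+\xi)(1-\xi)$, so that near $\xi=-1$ one may set $s:=1+\xi$ and write $\hs\simeq s$ (the endpoint $\xi=+1$ being symmetric), while on any compact subset of $I$ every weight is comparable to $1$ and the estimates reduce, after a cutoff, to the standard bound $\norm{f}{L^4}\lesssim\norm{f}{L^2}^{3/4}\norm{f_\xi}{L^2}^{1/4}+\norm{f}{L^2}$.

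For \eqref{ineq:003} I set $w:=\hs^{1/2}\theta_{\xi tt}$ and use $\norm{w}{L^4}^4=\norm{w^2}{L^2}^2\le\norm{w^2}{L^\infty}\norm{w}{L^2}^2$ together with the $W^{1,1}(I)\hookrightarrow L^\infty(I)$ embedding applied to $w^2$. Here $\norm{w}{L^2}^2\le\norm{\hs}{L^\infty}\norm{\theta_{\xi tt}}{L^2}^2$, and since $(w^2)_\xi=2ww_\xi$ with $ww_\xi=(\hs^{-1/2}w)(\hs^{1/2}w_\xi)$, $\hs^{-1/2}w=\theta_{\xi tt}$, and $\hs^{1/2}w_\xi=\tfrac12\hs'\theta_{\xi tt}+\hs\theta_{\xi\xi tt}$, one obtains $\norm{(w^2)_\xi}{L^1}\lesssim\norm{\theta_{\xi tt}}{L^2}\bigl(\norm{\theta_{\xi tt}}{L^2}+\norm{\hs\theta_{\xi\xi tt}}{L^2}\bigr)$. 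Combining these estimates and using Young's inequality gives \eqref{ineq:003}; because only globally bounded weights enter, no separate interior treatment is required.

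For \eqref{ineq:002} I estimate $\norm{\theta_{\xi\xi}}{L^4}$ and $\norm{\theta_\xi/\hs}{L^4}$ separately. For the former an $L^\infty\times L^2$ split is unavailable, since $\theta_{\xi\xi}$ carries no boundary condition and need not be bounded near $\xi=\pm1$; instead I apply Hardy's inequality \eqref{ineq:Hardy-3} with $p=4,\ k=1$ (so $k+\tfrac1p=\tfrac54>1$) to a cutoff of $\theta_{\xi\xi}$ near each endpoint, obtaining $\int\theta_{\xi\xi}^4\idx\lesssim\int\bigl(\hs^4|\theta_{\xi\xi\xi}|^4+\hs^4\theta_{\xi\xi}^4\bigr)\idx$ up to an interior term handled as above. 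The first term I control using that $\psi:=\hs^{3/2}\theta_{\xi\xi\xi}$ lies in $H^1(I)$ — indeed $\psi\in L^2$ since $\hs$ is bounded, and $\psi_\xi=\tfrac32\hs^{1/2}\hs'\theta_{\xi\xi\xi}+\hs^{3/2}\theta_{\xi\xi\xi\xi}\in L^2$ — so that $\norm{\hs^{3/2}\theta_{\xi\xi\xi}}{L^\infty}\lesssim\norm{\hs^{1/2}\theta_{\xi\xi\xi}}{L^2}+\norm{\hs^{3/2}\theta_{\xi\xi\xi\xi}}{L^2}$ and hence $\int\hs^4|\theta_{\xi\xi\xi}|^4\le\norm{\hs^{3/2}\theta_{\xi\xi\xi}}{L^\infty}^2\norm{\hs^{1/2}\theta_{\xi\xi\xi}}{L^2}^2$; the second term satisfies $\int\hs^4\theta_{\xi\xi}^4\le\norm{\hs}{L^\infty}^3\norm{\hs^{1/2}\theta_{\xi\xi}}{L^\infty}^2\norm{\theta_{\xi\xi}}{L^2}^2$, and $\norm{\hs^{1/2}\theta_{\xi\xi}}{L^\infty}$ is dominated by the right-hand side of \eqref{ineq:002} by the last line of Lemma~\ref{lm:embedding-1}. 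This yields $\norm{\theta_{\xi\xi}}{L^4}\lesssim$ the right-hand side of \eqref{ineq:002}. For $\norm{\theta_\xi/\hs}{L^4}$ I use that $\theta_\xi$ vanishes at $\xi=\pm1$ by \eqref{def:perturbation}, apply Hardy's inequality \eqref{ineq:Hardy-4} with $p=4,\ k=0$ (so $k+\tfrac1p=\tfrac14<1$) near each endpoint to bound $\int(\theta_\xi/\hs)^4$ by $\int\theta_{\xi\xi}^4$ (now finite), and on the interior estimate $\norm{\theta_\xi/\hs}{L^4}\lesssim\norm{\theta_\xi}{L^\infty}\lesssim\norm{\theta_{\xi\xi}}{L^2}$ by Lemma~\ref{lm:embedding-1}.

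The main obstacle is the $\norm{\theta_{\xi\xi}}{L^4}$ estimate: because $\theta_{\xi\xi}$ is not controlled in $L^\infty$ near the degenerate endpoints, naive interpolation fails, and one must use the $L^4$ Hardy inequality to trade the missing fractional derivative against the top-order weighted quantity $\hs^{3/2}\theta_{\xi\xi\xi\xi}$; making the weights match hinges on the observation that $\hs^{3/2}\theta_{\xi\xi\xi}\in H^1(I)\hookrightarrow L^\infty(I)$. The remaining steps are routine bookkeeping with Hardy's inequality and Lemma~\ref{lm:embedding-1}.
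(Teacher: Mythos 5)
Your proof is correct and follows essentially the same route as the paper: Hardy's inequality \eqref{ineq:Hardy-4} (with $k=0$, $p=4$) to pass from $\theta_\xi/\hs$ to $\theta_{\xi\xi}$, Hardy's inequality \eqref{ineq:Hardy-3} (with $k=1$, $p=4$) to trade $\theta_{\xi\xi}$ in $L^4$ for the weighted quantities $\hs\theta_{\xi\xi\xi}$ and $\hs\theta_{\xi\xi}$, and then $L^\infty$--$L^2$ interpolation combined with the one-dimensional embedding applied to $\hs^{3/2}\theta_{\xi\xi\xi}$ (resp.\ $\hs\theta_{\xi tt}$). The only differences are cosmetic bookkeeping choices (e.g.\ using $W^{1,1}\hookrightarrow L^\infty$ on $w^2$ rather than $H^1\hookrightarrow L^\infty$ on $\hs\theta_{\xi tt}$, and splitting the weight as $\hs^{1/2}\theta_{\xi\xi}$ instead of $\hs\theta_{\xi\xi}$ in the $L^\infty$ factor), which do not change the argument.
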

\begin{proof}
Recall that $ \theta_\xi\vert_{\xi=-1,1} = 0 $. Thanks to Hardy's inequalities and the Sobolev inequalities one has that 
    \begin{equation}
    \begin{gathered}
        \norm{\dfrac{\theta_\xi}{\hs}}{L^4}^4 \underbrace{\lesssim}_{\mathclap{\eqref{ineq:Hardy-4}~ \text{with} ~ k = 0, p = 4}} \norm{\theta_{\xi\xi}}{L^4}^4 \lesssim \norm{\hs \theta_{\xi\xi\xi}}{L^4}^4 + \norm{\hs \theta_{\xi\xi}}{L^4}^4 \\
        \lesssim \norm{\hs^{3/2} \theta_{\xi\xi\xi}}{L^\infty}^2 \norm{\hs^{1/2} \theta_{\xi\xi\xi}}{L^2}^2 + \norm{\hs \theta_{\xi\xi}}{L^\infty}^2 \norm{\hs \theta_{\xi\xi}}{L^2}^2 \\
        \lesssim \bigl( \norm{\hs^{3/2} \theta_{\xi\xi\xi\xi}}{L^2}^2 + \norm{\hs^{1/2} \theta_{\xi\xi\xi}}{L^2}^2 + \norm{\theta_{\xi\xi}}{L^2}^2 \bigr)^2,
    \end{gathered}
    \end{equation}
    which proves \eqref{ineq:002}.
    
    \smallskip
    
    To prove \eqref{ineq:003}, applying Hardy's inequalities and the Sobolev inequalities yields
    \begin{equation}
        \begin{gathered}
            \norm{\hs^{1/2} \theta_{\xi tt}}{L^4}^4 \lesssim \norm{\hs \theta_{\xi tt}}{L^\infty}^2 \norm{\theta_{\xi tt}}{L^2}^2 \lesssim (\norm{\hs \theta_{\xi\xi tt}}{L^2}^2 + \norm{\theta_{\xi tt}}{L^2}^2 ) \norm{\theta_{\xi tt}}{L^2}^2.
        \end{gathered}
    \end{equation}
    This finishes the proof of \eqref{ineq:003}.
\end{proof}

\subsection{Estimates of $\theta_{\xi\xi\xi\xi} $}
\label{sec:ell-est-001}

Similar to \eqref{lnest:040}, one can write, from \eqref{eq:lin+nonlin}, that 
\begin{equation}\label{e-est:001}
    \begin{gathered}
        \hs^2 \theta_{\xi\xi\xi\xi} + 4 \hs \hs' \theta_{\xi\xi\xi} = ( - 6 \hs \hs'' + \hs^2 ) \theta_{\xi\xi} + ( -4 \hs \hs''' + 2 \hs \hs' ) \theta_\xi \\
        + \hs \theta_{\xi\xi t} + \hs' \theta_{\xi t} - \dfrac{1}{2} \hs \partial_{tt} \theta - \dfrac{1}{2} \bigl\lbrack N_1 \bigr\rbrack_\xi + \bigl\lbrack \hs ( -2\theta_\xi + g_2(\theta_\xi)) \theta_{\xi t} \bigr\rbrack_\xi.
    \end{gathered}
\end{equation}
Then applying the same arguments as in \eqref{lnest:041}--\eqref{lnest:042} yields
\begin{equation}\label{e-est:002}
    \begin{gathered}
        \norm{\hs^{3/2}\theta_{\xi\xi\xi\xi}}{L^2}^2 + \norm{\hs^{1/2} \theta_{\xi\xi\xi}}{L^2}^2 \lesssim \norm{\dfrac{\hs^2 \theta_{\xi\xi\xi\xi} + 4 \hs \hs' \theta_{\xi\xi\xi}}{\hs^{1/2}}}{L^2}^2\\
        \lesssim \norm{\hs^{1/2} \theta_{\xi\xi}}{L^2}^2 + \norm{\hs^{1/2} \theta_{\xi}}{L^2}^2  + \norm{\hs^{1/2}\theta_{\xi\xi t}}{L^2}^2 + \norm{\dfrac{\theta_{\xi t}}{\hs^{1/2}}}{L^2}^2 + \norm{\hs^{1/2}\theta_{tt}}{L^2}^2 \\
        + \norm{\dfrac{(N_1)_\xi}{\hs^{1/2}}}{L^2}^2 + F(\norm{\theta_{\xi}}{L^\infty}) \norm{\theta_{\xi}}{L^\infty}^2 \norm{\dfrac{\theta_{\xi t}}{\hs^{1/2}}}{L^2}^2 \\
        + F(\norm{\theta_{\xi}}{L^\infty}) \norm{\theta_{\xi}}{L^\infty}^2 \norm{\hs^{1/2} \theta_{\xi\xi t}}{L^2}^2 
        + F(\norm{\theta_{\xi}}{L^\infty}) \norm{\theta_{\xi t}}{L^\infty}^2 \norm{\hs^{1/2} \theta_{\xi\xi}}{L^2}^2.
    \end{gathered}
\end{equation}
To calculate $ \norm{\dfrac{(N_1)_\xi}{\hs^{1/2}}}{L^2}^2 $, from \eqref{id:nonlinearity-4}, one can calculate
\begin{equation}\label{e-est:003}
    \begin{gathered}
        (N_1)_\xi = \hs^2 \lbrace f(\theta_\xi) \theta_\xi \theta_{\xi\xi} + f(\theta_\xi) \theta_\xi \theta_{\xi\xi\xi\xi} + f(\theta_\xi) \theta_{\xi\xi} \theta_{\xi\xi\xi} + f(\theta_\xi) \theta_{\xi\xi}\theta_{\xi\xi}\theta_{\xi\xi}   \rbrace \\
        + \hs \hs' \lbrace f(\theta_\xi) \theta_\xi^2  + f(\theta_\xi) \theta_\xi \theta_{\xi\xi\xi} + f(\theta_\xi) \theta_{\xi\xi}\theta_{\xi\xi} \rbrace \\
         + ( (\hs')^2 + \hs \hs'') \lbrace f(\theta_\xi) \theta_\xi \theta_{\xi\xi} \rbrace
        + ( (\hs')^2 + \hs \hs'')' f(\theta_\xi) \theta_\xi^2.
    \end{gathered}
\end{equation}
Therefore one has that
\begin{equation}\label{e-est:004}
    \begin{gathered}
        \norm{\dfrac{(N_1)_\xi}{\hs^{1/2}}}{L^2}^2 \lesssim F(\norm{\theta_\xi}{L^\infty}) \biggl\lbrace \norm{\theta_\xi}{L^\infty}^2 \norm{\hs^{3/2}\theta_{\xi\xi}}{L^2}^2 + \norm{\theta_\xi}{L^\infty}^2 \norm{\hs^{3/2}\theta_{\xi\xi\xi\xi}}{L^2}^2 \\
        + \norm{\hs \theta_{\xi\xi}}{L^\infty}^2 \norm{\hs^{1/2} \theta_{\xi\xi\xi}}{L^2}^2 + \norm{\hs \theta_{\xi\xi}}{L^\infty}^2\norm{\hs^{1/4} \theta_{\xi\xi}}{L^4}^4 + \norm{\theta_\xi}{L^\infty}^2 \norm{\hs^{1/2}\theta_\xi}{L^2}^2 \\
        + \norm{\theta_\xi}{L^\infty}^2 \norm{\hs^{1/2} \theta_{\xi\xi\xi}}{L^2}^2 + \norm{\hs^{1/4}\theta_{\xi\xi}}{L^4}^4 + \norm{\dfrac{\theta_\xi}{\hs}}{L^4}^2 \norm{\hs^{1/2}\theta_{\xi\xi}}{L^4}^2 \\
        + \norm{\theta_{\xi}}{L^\infty}^2 \norm{\dfrac{\theta_\xi}{\hs^{1/2}}}{L^2}^2
        \biggr\rbrace.
    \end{gathered}
\end{equation}
Consequently, one can conclude from \eqref{e-est:002}--\eqref{e-est:004} that
\begin{equation}\label{e-est:4th-dxi}
    \begin{gathered}
    \norm{\hs^{3/2}\theta_{\xi\xi\xi\xi}}{L^2}^2 + \norm{\hs^{1/2} \theta_{\xi\xi\xi}}{L^2}^2 \lesssim \norm{\theta_{\xi\xi}}{L^2}^2 + \norm{\theta_{\xi\xi t}}{L^2}^2 + \norm{\dfrac{\theta_{\xi t}}{\hs^{1/2}}}{L^2}^2 \\
    + \mathcal E_{\mrm{NL},1} + F(\mathcal{E}_{\mrm{NL},2}) \mathcal{E}_{\mrm{NL},2}^2,
    \end{gathered}
\end{equation}
thanks to \eqref{ineq:001} and \eqref{ineq:002}.

\subsection{Estimates of $ \theta_{\xi\xi\xi t} $ and $ \theta_{\xi\xi\xi} $}

Integrating \eqref{eq:lin+nonlin} from $\xi = -1 $ to $ \xi $ yields
\begin{equation}\label{eq:lin+nonlin-2}
    \begin{gathered}
        2 \hs \dt \theta_{\xi} - 2 \partial_\xi (\hs^2 \theta_{\xi\xi}) + 2 (2\vert\hs'\vert^2 - 4 \hs \hs'' + \hs^2 ) \theta_\xi \\ = \int_{-1}^\xi (\hs \partial_{tt}\theta)(\sigma)\,d\sigma + N_1 - 2 \hs (-2\theta_\xi + g_2(\theta_\xi))\theta_{\xi t}.
    \end{gathered}
\end{equation}
Then, similar to \eqref{lnest:051}, \eqref{eq:lin+nonlin-2} can be written as 
\begin{equation}\label{e-est:101}
    \begin{gathered}
        - \partial_\xi (\hs^2 \theta_{\xi\xi}) + 2 \vert\hs'\vert^2 \theta_\xi = (4\hs \hs'' - \hs^2) \theta_\xi - \hs\dt \theta_{\xi} + \dfrac{1}{2} \int_{-1}^\xi (\hs \partial_{tt}\theta)(\sigma)\,d\sigma\\
        + \dfrac{1}{2} N_1 - \hs(-2\theta_\xi + g_2(\theta_\xi))\theta_{\xi t}.
    \end{gathered}
\end{equation}
Repeating the same arguments as in \eqref{lnest:071}--\eqref{lnest:3rd-d-3} leads to
\begin{equation}\label{e-est:102}
    \begin{gathered}
        \sum_{k=0,1} \biggl\lbrace  \norm{\hs \dt^k \theta_{\xi\xi\xi}}{L^2}^2 + \norm{\dt^k \theta_{\xi\xi}}{L^2}^2 + \norm{\dfrac{\dt^k \theta_{\xi}}{\hs^{1/2}}}{L^2}^2 \biggr\rbrace \lesssim \mathcal E_{\mrm{NL},1}\\
        + F(\mathcal E_{\mrm{NL},2})\mathcal E_{\mrm{NL},2}^2 + \norm{\dfrac{N_1}{\hs}}{L^2}^2 + \norm{\dfrac{\dt N_1}{\hs}}{L^2}^2.
    \end{gathered}
\end{equation}
It suffices to calculate $ \norm{\dfrac{\dt N_1}{\hs}}{L^2}^2 $. The estimate of  $ \norm{\dfrac{N_1}{\hs}}{L^2} ^2 $ follows similarly. 

From \eqref{id:nonlinearity-4}, direct calculation yields
\begin{equation}\label{e-est:103}
    \begin{gathered}
        \dt N_1 = \hs^2 \bigl\lbrace f(\theta_\xi) \theta_\xi \theta_{\xi t} + f(\theta_\xi) \theta_{\xi t} \theta_{\xi\xi\xi} + f(\theta_\xi) \theta_{\xi} \theta_{\xi\xi\xi t} + f(\theta_\xi) \theta_{\xi t} \theta_{\xi\xi} \theta_{\xi\xi} \\
        + f(\theta_\xi) \theta_{\xi\xi} \theta_{\xi\xi t} \bigr\rbrace \\
        + \hs \hs' f(\theta_\xi) \theta_{\xi t} \theta_{\xi\xi} + \hs \hs' f(\theta_\xi) \theta_{\xi}\theta_{\xi\xi t} 
        + (\vert \hs'\vert^2 + \hs \hs'') f(\theta_\xi) \theta_{\xi} \theta_{\xi t}.
    \end{gathered}
\end{equation}
Therefore one has that
\begin{equation}\label{e-est:104}
    \begin{gathered}
        \norm{\dfrac{\dt N_1}{\hs}}{L^2}^2 \lesssim F(\norm{\theta_\xi}{L^\infty}) \biggl\lbrace \norm{\theta_{\xi}}{L^\infty}^2 \norm{\hs \theta_{\xi t}}{L^2}^2 + \norm{\theta_{\xi t}}{L^\infty}^2 \norm{\hs \theta_{\xi\xi\xi}}{L^2}^2 \\
        + \norm{\theta_{\xi}}{L^\infty}^2 \norm{\hs\theta_{\xi\xi\xi t}}{L^2}^2 + \norm{\theta_{\xi t}}{L^\infty}^2 \norm{\hs \theta_{\xi\xi}}{L^\infty}^2 \norm{\theta_{\xi\xi}}{L^2}^2 + \norm{\hs \theta_{\xi\xi}}{L^\infty}^2 \norm{\theta_{\xi\xi t}}{L^2}^2 \\
        + \norm{\theta_{\xi t}}{L^\infty}^2 \norm{\theta_{\xi\xi}}{L^2}^2 + \norm{\theta_{\xi}}{L^\infty}^2 \norm{\theta_{\xi\xi t}}{L^2}^2 + \norm{\dfrac{\theta_{\xi}}{\hs}}{L^4}^2 \norm{\theta_{\xi t}}{L^4}^2
        \biggr\rbrace.
    \end{gathered}
\end{equation}
Consequently, one can conclude from \eqref{e-est:102}--\eqref{e-est:104} that
\begin{equation}\label{e-est:3rd-dxi}
    \begin{gathered}
        \sum_{k=0,1} \biggl\lbrace  \norm{\hs \dt^k \theta_{\xi\xi\xi}}{L^2}^2 + \norm{\dt^k \theta_{\xi\xi}}{L^2}^2 + \norm{\dfrac{\dt^k \theta_{\xi}}{\hs^{1/2}}}{L^2}^2 \biggr\rbrace \lesssim \mathcal E_{\mrm{NL},1}\\
        + F(\mathcal E_{\mrm{NL},2})\mathcal E_{\mrm{NL},2}^2.
    \end{gathered}
\end{equation}

\smallskip

In summary, from \eqref{e-est:4th-dxi} and \eqref{e-est:3rd-dxi}, we have shown that
\begin{equation}\label{e-est:total-1}
    \mathcal E_{\mrm{NL},2} \lesssim \mathcal E_{\mrm{NL},1}
    + F(\mathcal E_{\mrm{NL},2})\mathcal E_{\mrm{NL},2}^2.
\end{equation}

\section{Nonlinear {\it \textbf{a priori}} estimates and asymptotic stability}\label{sec:nonlinear_apriori}

\subsection{Energy estimates}
We start with rewrite \eqref{lgeq:prtbtn-csv} as
\begin{equation}\label{lgeq:prtbtn-csv-2}
    \begin{gathered}
        \hs \partial_{tt} \theta + \biggl\lbrace \dfrac{\hs^2}{(1+\theta_\xi)^2} + \dfrac{(\hs')^2}{(1+\theta_\xi)^4} - \dfrac{2\hs \hs''}{(1+\theta_{\xi})^4} + \dfrac{5}{4} \hs^2 \biggl\lbrack \biggl( \dfrac{1}{(1+\theta_\xi)^2} \biggr)_\xi \biggr\rbrack^2 \\
        - 2 \hs \biggl(\dfrac{1}{1+\theta_\xi}\biggr)_t 
        \biggr\rbrace_\xi - \biggl\lbrace \dfrac{\hs^2}{2} \biggl(\dfrac{1}{(1+\theta_\xi)^4}\biggr)_\xi\biggr\rbrace_{\xi\xi} = 0,\\
        \qquad \text{with} \quad \theta_{\xi} \vert_{\xi = -1, 1} = 0. 
    \end{gathered}
\end{equation}
Similar to \eqref{id:nonlinearities-1}, one has
\begin{equation}\label{id:nonlinearities-6}
    \dfrac{1}{(1+\theta_\xi)^k} = 1 - k \theta_\xi + g_k(\theta_\xi), \qquad i = 1,2,3,4, 
\end{equation}
where, for small $ \theta_\xi $,
\begin{equation}
    g_k(\theta_\xi) = \mathcal O(\theta_\xi^2). 
\end{equation}
Then one can separate \eqref{lgeq:prtbtn-csv-2} into the linear and nonlinear parts, by writing
\begin{equation}
    \begin{gathered}
        \hs \partial_{tt} \theta - 2 \bigl\lbrace (\hs^2 + 2 (\hs')^2 - 4 \hs \hs'') \theta_\xi +  \hs \theta_{\xi t} \bigr\rbrace_\xi + 2 (\hs^2 \theta_{\xi\xi})_{\xi\xi}\\
        + (M_1)_\xi + (M_2)_{\xi\xi} = 0,
    \end{gathered}
\end{equation}
where
\begin{align}
    & \label{id:nonlinear-7} \begin{aligned}
        M_1  := & \hs^2 g_2(\theta_\xi) + \lbrack (\hs')^2 -2 \hs \hs'' \rbrack g_4(\theta_\xi)\\ & + \dfrac{5}{4} \hs^2 \lbrack (-2\theta_\xi + g_2(\theta_\xi))_\xi \rbrack^2 + 2 \hs (g_1(\theta_\xi))_t,
    \end{aligned}
    \\
    & \label{id:nonlinear-8} \begin{aligned}
        M_2 := & - \dfrac{\hs^2}{2} (g_4(\theta_\xi))_\xi, 
    \end{aligned}
\end{align}
or using \eqref{id:nonlinearity-5}
\begin{align}
& \label{id:nonlinear-9}
    \begin{aligned}
        M_1 = & \hs^2 \lbrace f(\theta_\xi) \theta_\xi^2 + f(\theta_\xi) \theta_\xi \theta_{\xi\xi} +  f(\theta_\xi) \theta_{\xi\xi}^2 \rbrace + \lbrack (\hs')^2 -2 \hs \hs'' \rbrack f(\theta_\xi) \theta_\xi^2 \\
        & + \hs f(\theta_\xi) \theta_\xi \theta_{\xi t},
        \end{aligned}\\
& \label{id:nonlinear-10}
        \begin{aligned}
        M_2 = & \hs^2 f(\theta_\xi) \theta_\xi \theta_{\xi\xi}.
    \end{aligned}
\end{align}

Now we are ready to establish the estimates of $ \mathcal E_{\mrm{NL},1} $. 
In particular, let 
\begin{align}
    &\label{lnest:energy-k} \begin{aligned}
        \mathcal E_{k} : = &  \dfrac{1}{2} \int h_\mrm s \vert \dt^{k+1} \theta \vert^2 \,\idx + \int ( 2 \vert \hs'\vert^2 - 4 \hs \hs'' + h_\mrm s^2) \vert \dt^k \theta_{\xi} \vert^2 \,\idx \\
        & \quad +  \int \vert h_\mrm s \vert^2 \vert \dt^k \theta_{\xi\xi} \vert^2 \,\idx + \mfk c_1 \int h_\mrm s \vert \dt^k \theta_{\xi} \vert^2 \,\idx + \mfk c_1 \int h_\mrm s \dt^{k+1} \theta \cdot \dt^k \theta \,\idx, 
    \end{aligned}
        \\
    &\label{lnest:dissipation-k}\begin{aligned}    
        \mathcal D_{k} := & 2 \int h_\mrm s \vert \dt^{k+1} \theta_{\xi} \vert^2 \,\idx - \mfk c_1 \int h_\mrm s \vert \dt^{k+1}\theta \vert^2 \,\idx + 2 \mfk c_1 \int h_\mrm s^2 \vert \dt^k \theta_{\xi\xi} \vert^2 \,\idx \\
        & \qquad + 2 \mfk c_1 \int ( 2 \vert \hs' \vert^2 - 4 \hs \hs'' + h_\mrm s^2 \bigr) \vert \dt^k \theta_{ \xi} \vert^2 \,\idx.
    \end{aligned}
\end{align}
Then similar to \eqref{lnest:010} and \eqref{lnest:011}
\begin{equation}\label{est:701}
    \begin{gathered}
    \mathcal D_k \gtrsim \norm{\hs^{1/2} \dt^{k+1} \theta_\xi}{L^2}^2 + \norm{\hs \dt^k \theta_{\xi\xi}}{L^2}^2 + \norm{\dt^k \theta_\xi}{L^2}^2 \\
    \gtrsim \mathcal E_k \gtrsim \norm{\hs^{1/2} \dt^{k+1} \theta}{L^2}^2 + \norm{\hs \dt^k \theta_{\xi\xi}}{L^2}^2 + \norm{\dt^k \theta_\xi}{L^2}^2,
    \end{gathered}
\end{equation}
and therefore
\begin{equation}\label{est:702}
    \sum_{k=0,1,2} \mathcal E_k \lesssim \mathcal E_{\mrm{NL},1} \lesssim \sum_{k=0,1,2} \mathcal E_k. 
\end{equation}

\smallskip

We calculate the estimate of $ \mathcal E_2 $. The estimates of $ \mathcal E_0 $ and $ \mathcal E_1 $ follow with similar arguments. 

After applying $ \dt^2 $ to \eqref{eq:lin+nonlin-2} and repeating the same arguments as in \eqref{lnest:003}--\eqref{lnest:009}, one can conclude
\begin{equation}\label{est:703}
    \begin{gathered}
    \dfrac{\mathrm{d}}{\mathrm{d}t} \mathcal E_2 + \mathcal D_2 = \int (M_1)_{tt} \theta_{\xi ttt} \idx - \int (M_2)_{tt}  \theta_{\xi\xi ttt} \idx \\ + \mathfrak c_1 \int (M_1)_{tt} \theta_{\xi tt} \idx - \mathfrak c_1 \int (M_2)_{tt} \theta_{\xi\xi tt} \idx.
    \end{gathered}
\end{equation}
Therefore, it suffices to estimate the right-hand side of \eqref{est:703}. 

\bigskip 

{\par\noindent\bf Estimates of $\int (M_1)_{tt} \theta_{\xi ttt} \idx $ and $ \int (M_1)_{tt} \theta_{\xi tt} \idx $}

Applying H\"older's inequality yields
\begin{equation}\label{est:704}
    \begin{gathered}
    \vert\int (M_1)_{tt} \theta_{\xi ttt} \idx\vert +  \vert\int (M_1)_{tt} \theta_{\xi tt} \idx\vert \\ \lesssim \norm{\dfrac{(M_1)_{tt}}{\hs^{1/2}}}{L^2} ( \norm{\hs^{1/2}\theta_{\xi ttt}}{L^2} + \norm{\hs^{1/2}\theta_{\xi tt}}{L^2}).
    \end{gathered}
\end{equation}
From \eqref{id:nonlinear-9}, one can calculate
\begin{equation}\label{est:705}
    \begin{aligned}
    & (M_1)_{tt} = \hs^2 \lbrace f(\theta_\xi) \theta_\xi \theta_{\xi tt} + f(\theta_\xi) \theta_{\xi t}^2 + f(\theta_\xi) \theta_\xi \theta_{\xi\xi tt} + f(\theta_\xi) \theta_{\xi t} \theta_{\xi\xi t} \\
    & \qquad + f(\theta_\xi)\theta_{\xi tt} \theta_{\xi\xi} + f(\theta_\xi) \theta_{\xi t}^2 \theta_{\xi\xi} + f(\theta_\xi) \theta_{\xi\xi} \theta_{\xi\xi tt} + f(\theta_\xi) \theta_{\xi\xi t}^2\\
    & \qquad + f(\theta_\xi) \theta_{\xi t}\theta_{\xi\xi} \theta_{\xi\xi t} + f(\theta_\xi) \theta_{\xi tt} \theta_{\xi\xi}^2+ f(\theta_\xi) \theta_{\xi t}^2\theta_{\xi\xi}^2
    \rbrace \\
    & \quad + \hs \lbrace f(\theta_\xi) \theta_{\xi} \theta_{\xi ttt} + f(\theta_\xi) \theta_{\xi t} \theta_{\xi tt} + f(\theta_\xi) \theta_{\xi t}^3
    \rbrace \\
    & \quad + [ (\hs')^2 - 2\hs \hs'']\lbrace f(\theta_\xi) \theta_\xi \theta_{\xi tt} + f(\theta_\xi) \theta_{\xi t}^2
    \rbrace.
    \end{aligned}
\end{equation}
Therefore, one can calculate
\begin{equation}\label{est:706}
    \begin{gathered}
        \norm{\dfrac{(M_1)_{tt}}{\hs^{1/2}}}{L^2} \lesssim F(\norm{\theta_{\xi}}{L^\infty})\biggl\lbrace  
        \norm{\theta_{\xi}}{L^\infty} \norm{\hs^{3/2} \theta_{\xi tt}}{L^2} + \norm{\theta_{\xi t}}{L^\infty} \norm{\hs^{3/2} \theta_{\xi t}}{L^2} \\
        + \norm{\theta_{\xi}}{L^\infty} \norm{\hs^{3/2} \theta_{\xi\xi tt}}{L^2} + \norm{\theta_{\xi t}}{L^\infty} \norm{\hs^{3/2} \theta_{\xi\xi t}}{L^2} +
        \norm{\hs^{1/2}  \theta_{\xi\xi}}{L^\infty} \norm{\hs\theta_{\xi tt}}{L^2}\\
        + \norm{\theta_{\xi t}}{L^\infty}^2 \norm{\hs^{3/2} \theta_{\xi\xi}}{L^2} + \norm{\hs^{1/2} \theta_{\xi\xi}}{L^\infty} \norm{\hs \theta_{\xi\xi tt}}{L^2} + \norm{\hs \theta_{\xi\xi t}}{L^\infty} \norm{\hs^{1/2} \theta_{\xi\xi t}}{L^2}\\
        + \norm{\theta_{\xi t}}{L^\infty} \norm{\hs \theta_{\xi\xi}}{L^\infty} \norm{\hs^{1/2} \theta_{\xi\xi t}}{L^2} + \norm{\hs^{1/2} \theta_{\xi\xi}}{L^\infty }^2\norm{\hs^{1/2}\theta_{\xi tt}}{L^2}\\
        + \norm{\theta_{\xi t}}{L^\infty}^2 \norm{\hs \theta_{\xi\xi}}{L^\infty} \norm{\hs^{1/2} \theta_{\xi\xi}}{L^2}
        \\
        + \norm{\theta_{\xi}}{L^\infty} \norm{\hs^{1/2} \theta_{\xi ttt}}{L^2} + \norm{\theta_{\xi t}}{L^\infty} \norm{\hs^{1/2} \theta_{\xi tt}}{L^2} + \norm{\theta_{\xi t}}{L^\infty}^2 \norm{\hs^{1/2}\theta_{\xi t}}{L^2} 
        \\
        + \norm{\dfrac{\theta_{\xi}}{\hs}}{L^4} \norm{\hs^{1/2} \theta_{\xi tt}}{L^4} + \norm{\theta_{\xi t}}{L^\infty} \norm{\dfrac{\theta_{\xi t}}{\hs^{1/2}}}{L^2}
        \biggr\rbrace.
    \end{gathered}
\end{equation}
Therefore, \eqref{est:704}--\eqref{est:706} implies
\begin{equation}\label{est:707}
    \begin{gathered}
    \vert\int (M_1)_{tt} \theta_{\xi ttt} \idx\vert +  \vert\int (M_1)_{tt} \theta_{\xi tt} \idx\vert \\
    \lesssim P(\mathcal E_{\mrm{NL},1} + \mathcal E_{\mrm{NL},2}) \sum_{k=0,1,2}\mathcal D_k,
    \end{gathered}
\end{equation}
thanks to \eqref{ineq:001}--\eqref{ineq:003}. Hereafter $ P $ is a function of the argument such that
\begin{equation}\label{id:nonlinearity-5-1}
    P = P(y) \geq 0 \in C^\infty[0,\infty) \quad \text{and} \quad P(0) = 0.
\end{equation}

\smallskip

{\par\noindent\bf Estimates of $\int (M_2)_{tt}  \theta_{\xi\xi ttt} \idx$ and $\int (M_2)_{tt}  \theta_{\xi\xi tt} \idx$}

From \eqref{id:nonlinear-8} and \eqref{id:nonlinear-10}, one can calculate
\begin{equation}\label{est:708}
    \begin{gathered}
        (M_2)_{tt} = - \dfrac{\hs^2}{2} g'_4(\theta_\xi) \theta_{\xi\xi tt} \\
        -  \underbrace{\dfrac{\hs^2}{2} \lbrace g_4''(\theta_\xi) \theta_{\xi t} \theta_{\xi\xi t} + g_4''(\theta_\xi) \theta_{\xi tt} \theta_{\xi\xi} 
        + g_4'''(\theta_\xi) \theta_{\xi t}^2 \theta_{\xi\xi} \rbrace}_{=:M_3}. 
    \end{gathered}
\end{equation}
Therefore, 
\begin{equation}\label{est:709}
    \begin{gathered}
        \int (M_2)_{tt}  \theta_{\xi\xi ttt} \idx =  \dfrac{\mathrm{d}}{\mathrm{d}t} 
        \mathcal E_\delta 
        + \int \hs^2 \lbrace \dfrac{1}{4} g_4''(\theta_\xi) \theta_{\xi t} \theta_{\xi\xi tt}^2  \rbrace \idx + \int (M_3)_t \theta_{\xi\xi tt} \idx, 
    \end{gathered}
\end{equation}
where
\begin{equation}\label{def:energy-delta}
    \mathcal E_\delta : = - \dfrac{1}{4} \int \hs^2 g'_4(\theta_\xi)  \vert \theta_{\xi\xi tt} \vert^2 \idx - \int M_3 \theta_{\xi\xi tt}\idx.
\end{equation}
Direct calculation yields 
\begin{equation}\label{est:710}
    \begin{gathered}
    \mathcal E_\delta \lesssim F(\norm{\theta_\xi}{L^\infty})\lbrace \norm{\theta_{\xi}}{L^\infty} \norm{\hs \theta_{\xi\xi tt}}{L^2}^2 \\
    + \norm{\theta_{\xi t}}{L^\infty} \norm{\hs \theta_{\xi\xi t}}{L^2}\norm{\hs \theta_{\xi\xi tt}}{L^2} 
    + \norm{\theta_{\xi tt}}{L^2} \norm{\hs \theta_{\xi\xi}}{L^\infty}\norm{\hs \theta_{\xi\xi tt}}{L^2} \\
    + \norm{\theta_{\xi t}}{L^\infty}^2 \norm{\hs \theta_{\xi\xi}}{L^2}\norm{\hs \theta_{\xi\xi tt}}{L^2} \rbrace \lesssim P(\mathcal E_{\mrm{NL},1} + \mathcal E_{\mrm{NL},2}) \mathcal E_{\mrm{NL},1}.
    \end{gathered}
\end{equation}
On the other hand, one has that
\begin{equation}
    \begin{gathered}
        (M_3)_t = \hs^2 \lbrace f(\theta_\xi) \theta_{\xi t} \theta_{\xi t} \theta_{\xi\xi t} + f(\theta_\xi) \theta_{\xi tt} \theta_{\xi\xi t} + f(\theta_\xi) \theta_{\xi t} \theta_{\xi\xi tt} \\
        + f(\theta_\xi) \theta_{\xi t} \theta_{\xi tt} \theta_{\xi\xi} + f(\theta_\xi) \theta_{\xi ttt} \theta_{\xi\xi} + f(\theta_\xi) \theta_{\xi t} \theta_{\xi t} \theta_{\xi t} \theta_{\xi\xi} \\
        + f(\theta_\xi) \theta_{\xi t} \theta_{\xi tt} \theta_{\xi\xi}
        \rbrace.
    \end{gathered}
\end{equation}
Therefore, 
\begin{equation}\label{est:711}
    \begin{gathered}
        \vert \int \hs^2 \lbrace \dfrac{1}{4} g_4''(\theta_\xi) \theta_{\xi t} \theta_{\xi\xi tt}^2  \rbrace \idx + \int (M_3)_t \theta_{\xi\xi tt} \idx \vert \\
        \lesssim F(\norm{\theta_{\xi}}{L^\infty}) \norm{\theta_{\xi t}}{L^\infty}\norm{\hs \theta_{\xi\xi tt}}{L^2}^2 \\
        + F(\norm{\theta_{\xi}}{L^\infty})\lbrace
        \norm{\theta_{\xi t}}{L^\infty}^2\norm{\hs \theta_{\xi\xi t}}{L^2} + \norm{\theta_{\xi tt}}{L^2} \norm{\hs \theta_{\xi\xi t}}{L^\infty} \\
        + \norm{\theta_{\xi t}}{L^\infty} \norm{\theta_{\xi tt}}{L^2} \norm{\hs \theta_{\xi\xi}}{L^\infty} +\norm{\hs^{1/2}\theta_{\xi ttt}}{L^2} \norm{\hs^{1/2} \theta_{\xi\xi}}{L^\infty} \\
        + \norm{\theta_{\xi t}}{L^\infty}^3 \norm{\hs \theta_{\xi\xi}}{L^2}
        + \norm{\theta_{\xi t}}{L^\infty} \norm{\theta_{\xi tt}}{L^2}\norm{\hs \theta_{\xi\xi}}{L^\infty}
        \rbrace \times \norm{\hs \theta_{\xi\xi tt}}{L^2} \\
        \lesssim P(\mathcal E_{\mrm{NL},1} + \mathcal E_{\mrm{NL},2}) \sum_{k=0,1,2}\mathcal D_{k}
    \end{gathered}
\end{equation}

Following similar arguments implies that
\begin{equation}\label{est:712}
    \begin{gathered}
        \vert \int (M_2)_{tt}  \theta_{\xi\xi tt} \idx \vert \lesssim P(\mathcal E_{\mrm{NL},1} + \mathcal E_{\mrm{NL},2}) \sum_{k=0,1,2}\mathcal D_{k}. 
    \end{gathered}
\end{equation}

\bigskip 

{\noindent\bf Summary}

Collecting \eqref{est:703}, \eqref{est:707}, \eqref{est:709}--\eqref{est:712} leads to
\begin{equation}
    \dfrac{\mathrm{d}}{\mathrm{d}t}(\mathcal E_2 + \mathcal E_\delta) + \mathcal D_2 \leq P(\mathcal E_{\mrm{NL},1} + \mathcal E_{\mrm{NL},2}) \sum_{k=0,1,2}\mathcal D_{k}.
\end{equation}
Repeating the same arguments for $ \mathcal E_0 $ and $ \mathcal E_1 $, one can conclude that
\begin{equation}\label{est:a-priori}
    \dfrac{\mathrm{d}}{\mathrm{d}t}(\sum_{k=0,1,2}\mathcal E_k + \mathcal E_\delta) + (1- P(\mathcal E_{\mrm{NL},1} + \mathcal E_{\mrm{NL},2})) \sum_{k=0,1,2}\mathcal D_k \leq 0. 
\end{equation}

\subsection{Continuity argument and proof of Theorem \ref{thm-NL}}

Now we are at the right place to demonstrate the continuity arguments, which lead to the global stability and asymptotic stability theory. We first start with the {\it a priori} assumption; that is, for some $ \epsilon \in (0,1) $, such that for $ \forall T \in (0,\infty) $, 
\begin{equation}\label{cont-arg:assumption}
    \sup_{0\leq t \leq T} \lbrace \mathcal E_{\mrm{NL},1}(t) + \mathcal E_{\mrm{NL},2}(t) \rbrace \leq \varepsilon.
\end{equation}

\smallskip

Then, for $ \varepsilon $ small enough, \eqref{e-est:total-1} and \eqref{est:a-priori}, together with \eqref{est:701}, \eqref{est:702}, and \eqref{est:710}, imply that 
\begin{equation}\label{est:720}
    \sum_{k=0,1,2}\mathcal E_k + \mathcal E_\delta \geq \mfk d_1 \sum_{k=0,1,2}\mathcal E_k, \qquad \mathcal E_{\mrm{NL},2} \leq \mfk d_2 \mathcal E_{\mrm{NL},1},
\end{equation}
and
\begin{equation}
    \dfrac{\mrm{d}}{\mrm{d}t}(\sum_{k=0,1,2}\mathcal E_k + \mathcal E_\delta) + \mfk d_3 (\sum_{k=0,1,2}\mathcal E_k + \mathcal E_\delta) \leq 0 
\end{equation}
for some constants $ \mfk d_1, \mfk d_2, \mfk d_3 \in (0,\infty) $, 
and therefore
\begin{equation}\label{est:721}
    \sup_{0\leq t\leq T} e^{\mfk d_3 t}(\sum_{k=0,1,2}\mathcal E_k(t) + \mathcal E_\delta(t)) \leq (\sum_{k=0,1,2}\mathcal E_k(0) + \mathcal E_\delta(0)) =: \mathfrak E_0.
\end{equation}
Here $ \mathfrak E_0 $ is the total initial energy, i.e., 
\begin{equation}\label{def:initial_energy}
    \begin{gathered}
    \mathfrak E_0 := \sum_{k=0,1,2} \biggl\lbrace \dfrac{1}{2} \int h_\mrm s \vert \dt^{k+1} \theta_\mrm{in} \vert^2 \,\idx + \int ( 2 \vert \hs'\vert^2 - 4 \hs \hs'' + h_\mrm s^2) \vert \dt^k \theta_{\mrm{in},\xi} \vert^2 \,\idx \\
    \qquad +  \int \vert h_\mrm s \vert^2 \vert \dt^k \theta_{\mrm{in},\xi\xi} \vert^2 \,\idx + \mfk c_1 \int h_\mrm s \vert \dt^k \theta_{\mrm{in},\xi} \vert^2 \,\idx \\
    \qquad + \mfk c_1 \int h_\mrm s \dt^{k+1} \theta_\mrm{in} \cdot \dt^k \theta_\mrm{in} \,\idx \biggr\rbrace 
    + \mathcal E_\delta(0).
        \end{gathered}
\end{equation}
where $ \mathcal E_\delta(0) = \mathcal E_\delta\vert_{t=0} $ is the error term due to the nonlinearity, defined in \eqref{def:energy-delta}.
Here $ \theta_{\mrm{in}} $ and $ \dt \theta_{\mrm{in}} $ are the initial data for equation \eqref{lgeq:prtbtn}. The higher-order derivatives in time are defined inductively using the equation.

On the other hand, estimates \eqref{est:702}, \eqref{est:720}, and \eqref{est:721} imply
\begin{equation}\label{est:722}
    \sup_{0\leq t \leq T} e^{\mfk d_3 t} \lbrace \mathcal E_{\mrm{NL},1} + \mathcal E_{\mrm{NL},2}\rbrace \leq \mfk d_4 \mfk E_0,
\end{equation}
for some constant $ \mfk d_4 \in (0,\infty) $. Therefore, for small enough initial data
\begin{equation}\label{est:723}
    \mfk E_0 \leq \dfrac{\varepsilon}{2\mfk d_4},
\end{equation}
estimate \eqref{est:722} implies
\begin{equation}
    \sup_{0\leq t \leq T} e^{\mfk d_3 t} \lbrace \mathcal E_{\mrm{NL},1} + \mathcal E_{\mrm{NL},2}\rbrace \leq \dfrac{\varepsilon}{2},
\end{equation}
and consequently, this closes the {\it a priori} assumption \eqref{cont-arg:assumption}.

\smallskip 
With the above {\it a priori} estimates with initial data satisfying \eqref{est:723}, one can apply standard Galerkin's method to construct a local-in-time solution. Furthermore, one can apply the standard continuity argument and conclude the proof of theorem \ref{thm-NL}.

\section{Local well-posedness for general initial data}\label{sec:local_well_posedness}

\subsection{Lagrangian formulation and main theory: Local well-posedness}

The goal of this section is to investigate the local-in-time well-posedness theory of system \eqref{sys:sw-st} with general initial data; that is, with initial data not close to the equilibrium given by \eqref{eq:equilibrium}. Indeed, we only assume $h_0$ satisfies some regularity and convexity condition, see \eqref{h_0}, below. To formulate the shallow water equations in the Lagrangian coordinates with reference to the initial height profile, define $ x = \eta(\xi, t) $ by
\begin{equation}\label{def:lagrangian-map-general}
	\int_{a(t)}^{\eta(\xi, t)} h(x,t)\dx = \int_{-1}^\xi h_0 (x)\dx. 
\end{equation}
Then repeating the derivation from \eqref{def:lagrangian-map} to \eqref{sys:sw-st-lagrangian}, one can write down the shallow water equations in the $ (\xi, t) $-coordinates as follows:
\begin{subequations}\label{sys:sw-lagrangian-general}
	\begin{gather}
		h(\xi, t) = \dfrac{h_0(\xi)}{\eta_\xi(\xi,t)},\quad \dt \eta(\xi,t) = u(\xi,t), \label{Local:fp}\\
		h_0\dt u + \partial_\xi \biggl( \biggl(\dfrac{h_0}{\eta_\xi}\biggr)^2 \biggr) - 2 h_0\frac{ \partial_\xi}{\eta_\xi}\biggl(\frac{\partial_\xi}{\eta_\xi}\biggl(\frac{\partial_\xi}{\eta_\xi}\biggl(\frac{h_0}{\eta_\xi}\bigg)\bigg)\bigg) = 2 \partial_\xi \biggl( \dfrac{h_0}{\eta_\xi } \dfrac{u_\xi}{\eta_\xi} \biggr),\label{Local:eq}\\
	\partial_\xi \eta \vert_{\xi = -1,1} = 1, \quad \partial_\xi u\vert_{\xi = -1,1} = 0.\label{Local:bc}
	\end{gather}
	Here we have abused the notation and used $ u = u(\xi, t) = u(\eta(\xi,t),t) $ to denote the velocity in both the Lagrangian and the Euclidean coordinates. Moreover, to be consistent with our estimates, inspired by \cite{bresch2003some}, we rewrite the surface tension term as follows
	\begin{equation}
		\begin{gathered}
			-2 h_0\frac{ \partial_\xi}{\eta_\xi}\biggl(\frac{\partial_\xi}{\eta_\xi}\biggl(\frac{\partial_\xi}{\eta_\xi}\biggl(\frac{h_0}{\eta_\xi}\bigg)\bigg)\bigg)
			= -\partial_\xi\biggl(\frac{\partial_\xi}{\eta_\xi}\biggl(\frac{\partial_\xi}{\eta_\xi}\biggl(\frac{h_0^2}{\eta_\xi^2}\bigg)\bigg)-3\biggl(\frac{\partial_\xi}{\eta_\xi}\biggl(\frac{h_0}{\eta_\xi}\bigg)\bigg)^2\bigg)\\
			=\partial_{\xi\xi}\biggl(\frac{2h_0^2\eta_{\xi\xi}}{\eta_\xi^5}\biggr)
			+\partial_\xi\biggl(\frac{5h_0^2\eta_{\xi\xi}^2}{\eta_\xi^6}\biggr)
			+\partial_\xi\biggl(\frac{|\partial_\xi h_0|^2-2h_0\partial_\xi^2h_0 }{\eta_\xi^4}\biggr).
		\end{gathered}
	\end{equation}
	Therefore, \eqref{Local:eq} can be written as
	\begin{equation}{\tag{\ref*{Local:eq}'}} \label{Local:eq-re}
		\begin{gathered}
			h_0\dt u + \biggl\lbrace  \dfrac{h_0^2}{\eta_\xi^2}  +  \dfrac{|h_0'|^2-2h_0 h_0'' }{\eta_\xi^4} +\frac{5h_0^2\eta_{\xi\xi}^2}{\eta_\xi^6}\biggr\rbrace_\xi + \biggl\lbrace \dfrac{2h_0^2\eta_{\xi\xi}}{\eta_\xi^5} \biggr\rbrace_{\xi\xi} \\
			-2 \partial_\xi \biggl( \dfrac{h_0}{\eta_\xi } \dfrac{u_\xi}{\eta_\xi} \biggr)  
			 =  0.
		\end{gathered}
	\end{equation}
\end{subequations}
Notice that \eqref{Local:eq-re} is consistent with \eqref{lgeq:prtbtn-csv-2}. 
Notice that, since $ h_0 $ is no longer the equilibrium profile, compared to \eqref{eq:lin+nonlin}, the linear part of \eqref{Local:eq-re} has an extra term, i.e.,
\begin{equation}\label{Local:extra-term}
	(h_0^2 + \vert h_0'\vert^2 - 2 h_0 h_0'')' = 2h_0 (h_0' - h_0''') \neq 0. 
\end{equation}

Fortunately, this term is $ \mathcal O(h_0) $ near the boundary, and therefore the weighted estimates involving negative power of $ h_0 $ in section \ref{sec:linear-elliptic} (i.e., a weight of $ h_0^{-1} $ or $ h_0^{-1/2} $) are bounded. Therefore one can expect the elliptic estimates of \eqref{Local:eq-re} to be similar to those of  \eqref{eq:lin+nonlin} as in section \ref{sec:nonlinear_estimates}. For the energy estimate, instead of using the smallness of perturbation, one should use the smallness of time to control the nonlinearities. In particular, we prove the following local well-posedness result:
\begin{thm}[Local Well-posedness]\label{thm-local}
    Suppose the initial data $(u_0, h_0)$ satisfies 
\begin{equation}\label{u_0}
    u_0\in H^3(I),~~\partial_\xi u_0\in H_0^1(I),
\end{equation}
\begin{equation}\label{h_0}
    h_0\in H^4(I),~~\partial_{\xi\xi}h_0\leq 0,~~
    C_1 d(\xi)\leq h_0\leq C_2 d(\xi)~~\forall \xi\in \Bar{I},
\end{equation}
for some positive constants $C_1$ and $C_2$, where $d(\xi)=d(\xi,\partial I)$ is 
the distant function from $\xi$ to the initial boundary. $\Phi(t)$ defined in \eqref{Phi} below is a energy functional, suppose $\Phi(0)\leq\infty$. Then there exist a small time $T^\ast>0$ such that system \eqref{sys:sw-st-lagrangian} admits a unique strong solution $(\eta, u)$ in $I\times [0,T]$, with
\begin{equation}\label{8.6}\left\{
    \begin{aligned}
        &\eta,\eta_\xi\in C^1([0,T^\ast];L^2(I)),~~\eta_{\xi\xi},h_0\partial_\xi^3\eta\in C([0,T^\ast];L^2(I)),\\
        &h_0^{3/2}\partial_\xi^4\eta\in L^\infty([0,T^\ast];L^2(I)),~~
        u,u_\xi,h_0^{1/2} u_t\in C([0,T^\ast];L^2(I)),\\
        &u_{\xi t},u_{\xi\xi},h_0^{1/2}u_{tt},h_0\partial_\xi^3 u\in L^\infty([0,T^\ast];L^2(I)),
        \end{aligned}  \right.             
    \end{equation}
    and
\begin{equation}\label{8.7}
	\sup_{0\leq t\leq T^\ast}\Phi(t)\leq C,
\end{equation}
where $C$ depends on $\norm{h_0}{H^4}$, $\norm{u_0}{H^3}$ and $T^\ast$.
\end{thm}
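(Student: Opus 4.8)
The plan is to follow the now-standard strategy for degenerate, physical-vacuum free-boundary problems, adapting the a priori estimates of Sections~\ref{sec:nonlinear_estimates} and~\ref{sec:nonlinear_apriori} to the reference profile $h_0$ in place of $h_\mrm s$. First I would construct approximate solutions by a Galerkin scheme (or an equivalent linearized iteration) adapted to the degenerate weights: at each level one solves a linear degenerate fourth-order parabolic problem built from $h_0$ and the previous iterate, supplemented by the boundary conditions \eqref{Local:bc}, whose solvability rests on the degenerate elliptic estimates of Section~\ref{sec:linear-elliptic}. The point that makes the scheme close is the observation already recorded after \eqref{Local:extra-term}: the only new linear term relative to the equilibrium case, $(h_0^2+|h_0'|^2-2h_0h_0'')'=2h_0(h_0'-h_0''')$, vanishes to first order at $\xi=\pm1$, so all the $h_0^{-1}$- and $h_0^{-1/2}$-weighted estimates survive; moreover, since $h_0\simeq d(\xi)\simeq(1-\xi)(1+\xi)\simeq h_\mrm s$, the Hardy, embedding, and weighted Poincar\'e inequalities of Sections~\ref{sec:preliminaries} apply verbatim with $h_0$ in place of $h_\mrm s$. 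One also records that $\eta_\xi(0)=1$ and $\partial_t\eta_\xi=u_\xi$ is controlled, so the flow map stays a diffeomorphism ($\eta_\xi$ close to $1$) on a short interval.

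Second, the heart of the proof is a uniform bound on the energy functional $\Phi(t)$ — the analogue of $\mathcal E_{\mrm{NL},1}+\mathcal E_{\mrm{NL},2}$ with weights $h_0$ — on a short time interval $[0,T^\ast]$. For the temporal part one applies $\partial_t^k$, $k=0,1,2$, to \eqref{Local:eq-re}, tests with $\partial_t^{k+1}\eta$, and repeats \eqref{lnest:003}--\eqref{lnest:009}: coercivity of the resulting quadratic form relies precisely on $m_0:=2|h_0'|^2-4h_0h_0''+h_0^2>0$, which is where the convexity hypothesis $\partial_{\xi\xi}h_0\le0$ (making $-4h_0h_0''\ge0$) together with the nondegeneracy $h_0\simeq d(\xi)$ (hence $|h_0'|\gtrsim1$ near $\partial I$) enter. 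Unlike the stability theorem, the nonlinear commutators and the $M_1,M_2$-type terms from \eqref{id:nonlinear-9}--\eqref{id:nonlinear-10} are no longer multiplied by a small factor, so one controls them using $\int_0^t$ in time and closes the estimate via a differential inequality $\tfrac{\mrm d}{\mrm dt}\Phi\le C\,\mathcal P(\Phi)$ for a polynomial $\mathcal P$, yielding a bound uniform on $[0,T^\ast]$ with $T^\ast$ depending only on $\Phi(0)$, $\|h_0\|_{H^4}$, $\|u_0\|_{H^3}$. The spatial part — the weighted bounds on $\eta_{\xi\xi\xi}$, $\partial_t\eta_{\xi\xi\xi}$, $\eta_{\xi\xi\xi\xi}$ — is then recovered from the temporal part exactly as in \eqref{e-est:total-1}: integrate \eqref{Local:eq-re} once in $\xi$, divide by $h_0^{1/2}$ or by $h_0+\varepsilon$ and let $\varepsilon\to0$, using Lemmas~\ref{lm:embedding-1}--\ref{lm:embedding-2}. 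Here $\Phi(0)<\infty$ is needed as a compatibility condition: the time derivatives $\partial_t^k u(0)$, $\partial_t^k\eta(0)$ reconstructed inductively from \eqref{Local:eq-re} must lie in the required weighted spaces.

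Third, with uniform estimates in hand one extracts a weak-$\ast$ limit of the approximations in the spaces of \eqref{8.6}; the time-derivative bounds give enough strong compactness to pass to the limit in every term of \eqref{Local:eq-re}, including the degenerate fourth-order surface-tension term and the $h_0$-weighted viscous term, producing a strong solution with $\sup_{[0,T^\ast]}\Phi\le C$. Uniqueness and continuous dependence follow by writing the equation for the difference $\delta\eta=\eta^{(1)}-\eta^{(2)}$ of two solutions, performing the lowest-order $h_0$-weighted energy estimate, and absorbing the differences of the nonlinear coefficients using the established bounds together with the smallness of $\int_0^t$; continuity in time in the stated norms is then obtained a posteriori from the equation and the energy identity. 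I expect the main obstacle to be the first step — designing an approximation for which each linear problem is simultaneously solvable and yields estimates uniform in the approximation parameter, given the coexistence of the degeneracy $h_0\sim d(\xi)$, the fourth-order capillary operator, and the degenerate ($h_0$-weighted) viscosity; in particular one must ensure the linearization preserves the cancellations that keep $m_0>0$ and give the surface-tension term its favorable sign in the energy identity.
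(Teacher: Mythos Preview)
Your proposal is correct and follows essentially the same approach as the paper: a priori bounds on $\Phi(t)$ via temporal energy estimates (using the concavity of $h_0$ for coercivity) plus elliptic estimates to recover the spatial part, closed by smallness of $t$ rather than smallness of data, then existence by Galerkin and uniqueness by a weighted energy estimate on the difference. The only minor deviations are that the paper obtains the intermediate $\partial_t$-level by time-integrating the $\partial_t^2$-estimate rather than by testing directly, and carries out the $\partial_\xi^3 u$ elliptic estimate by multiplying the once-$\xi$-integrated equation by $\eta_\xi^5\partial_\xi^3 u$ and integrating by parts (so concavity enters through \eqref{ixdtre-L}) rather than by dividing by $h_0^{1/2}$ or $h_0+\varepsilon$.
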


\smallskip For the completeness of this paper, we will sketch the local-in-time estimates, which lead to the local-in-time well-posedness theory, in the following. 

\smallskip

The energy functional for the solution to \eqref{Local:eq-re} is defined as
\begin{equation}\label{Phi}
\begin{gathered}
    \Phi(t) = \Phi(u,\eta,t):=\sum_{k=0}^2\| h_0^{1/2} 
    \partial^k_tu(\cdot,t)\|_{L^2}^2 +\sum_{k=0}^2\|h_0\partial_t^k\partial^2_\xi\eta(\cdot,t)\|_{L^2}^2 \\
    +\sum_{k=0}^2\|h_0\partial_t^k\partial_\xi\eta(\cdot,t)\|_{L^2}^2 
    +\sum_{k=0}^2\|(|\partial_\xi h_0|^2-2h_0\partial_\xi^2h_0)^{1/2}\partial_t^k\partial_\xi\eta(\cdot,t)\|_{L^2}^2\\
    +\sum_{k=0}^1 \|h_0 \dt^k \partial^3_\xi  \eta(\cdot,t)\|_{L^2}^2 +\|h_0^{3/2}\partial^4_\xi\eta(\cdot,t)\|_{L^2}^2.
\end{gathered}
\end{equation}
Notice that $ \Phi $ is the analogy of $ \mathcal E_{\mrm{NL},1} + \mathcal E_{\mrm{NL},2} $ defined in \eqref{def:energy_nonlinear_1} and \eqref{def:energy_nonlinear_2}.
Then similar to Lemmas \ref{lm:embedding-1} and \ref{lm:embedding-2}, one has the following embedding inequalities: 
\begin{lem}\label{lm:embedding-3}
	The following inequalities hold:
	\begin{equation}\label{8.2}
		\begin{gathered}
		\norm{u_\xi}{L^\infty}+ \norm{\frac{u_\xi}{h_0^{1/2}}}{L^2} + \norm{\frac{u_\xi}{h_0}}{L^2} \lesssim  \norm{u_{\xi\xi}}{L^2} \lesssim \norm{h_0 \partial_\xi^3u}{L^2} + \norm{h_0 u_{\xi\xi}}{L^2} \\
   \lesssim \Phi(t)^{1/2}, \\
			\|u_{\xi t}\|_{L^2}\lesssim \|h_0u_{\xi t}\|_{L^2}+\|h_0\partial_\xi^2u_t\|_{L^2} \lesssim\Phi(t)^{1/2},\\
			\norm{h_0^{1/2} u_{\xi t}}{L^4} \lesssim \norm{h_0 u_{\xi\xi t}}{L^2} + \norm{u_{\xi t}}{L^2}\lesssim\Phi(t)^{1/2},\\
            \norm{h_0u_{\xi\xi}}{L^\infty}\lesssim \norm{h_0\partial_\xi^3 u}{L^2}+ \norm{u_{\xi\xi}}{L^2}\lesssim\Phi(t)^{1/2},
		\end{gathered}
	\end{equation}
similarly, 
\begin{equation}\label{8.3}
		\begin{gathered}
			\norm{\eta_{\xi\xi}}{L^2}\lesssim \norm{h_0\eta_{\xi\xi}}{L^2}+\norm{h_0\partial_\xi^3\eta}{L^2}  \Phi(t)^{1/2},\\
			\norm{h_0^{1/2}\partial_\xi^3\eta}{L^2}
			\lesssim \norm{h_0^{3/2}\partial_\xi^3\eta}{L^2}
			+\norm{h_0^{3/2}\partial_\xi^4\eta}{L^2}
			\lesssim \Phi(t)^{1/2},\\
            \norm{h_0^{1/2} \eta_{\xi\xi}}{L^\infty} \lesssim \norm{h_0^{1/2} \partial^3_\xi \eta}{L^2} + \norm{\eta_{\xi\xi}}{L^2}
			\lesssim \Phi(t)^{1/2},\\
			\norm{\eta_{\xi\xi}}{L^4} \lesssim \norm{h_0^{3/2} \partial_\xi^4\eta}{L^2} + \norm{\eta_{\xi\xi}}{L^2}\lesssim \Phi(t)^{1/2},\\
            \norm{h_0\partial_\xi^3\eta}{L^4} \lesssim \norm{h_0^{3/2} \partial_\xi^4\eta}{L^2} + \norm{h_0^{1/2}\partial^3_\xi\eta}{L^2}\lesssim \Phi(t)^{1/2}.
		\end{gathered}
	\end{equation}
\end{lem}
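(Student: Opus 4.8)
The proof follows the template of Lemmas~\ref{lm:embedding-1} and~\ref{lm:embedding-2}: each inequality is a combination of the Hardy inequalities of Lemma~\ref{lm:hardy-p} with one-dimensional Sobolev and Gagliardo--Nirenberg interpolation, localized near the two degenerate endpoints $\xi=\pm1$. The plan is as follows. I would first split $\bar I$ into an interior region, where \eqref{h_0} gives $h_0\simeq 1$ and all the claimed bounds reduce to the elementary embeddings $H^1\hookrightarrow L^\infty$ and $H^1\hookrightarrow L^4$ on a compact interval, and two collars near $\xi=\pm1$. On a collar near $\xi=-1$ I would use the sharp comparison $h_0(\xi)\simeq d(\xi)=1+\xi=:s$ from \eqref{h_0}, which turns every $h_0$-weighted quantity into an $s$-weighted quantity on an interval $(0,\delta)$ to which Lemma~\ref{lm:hardy-p} applies verbatim, with real (possibly non-integer) exponent $k$; the endpoint $\xi=1$ is symmetric. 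Two structural facts make this work. First, the boundary conditions \eqref{Local:bc} hold for all $t$, so $\partial_t^j\partial_\xi u|_{\xi=\pm1}=0$ for every $j$; this is what lets me invoke the vanishing version \eqref{ineq:Hardy-4}. Second, by \eqref{Local:fp} one has $u=\partial_t\eta$, hence $\partial_\xi^j u=\partial_t\partial_\xi^j\eta$, so each right-hand side produced below is, term by term, one of the summands of $\Phi(t)$ in \eqref{Phi} (using also $h_0^{3/2}\le C h_0$ because $h_0\lesssim 1$), which gives the final ``$\lesssim\Phi(t)^{1/2}$'' in \eqref{8.2} and \eqref{8.3}.

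With this setup the individual bounds are produced in three groups. \emph{(i) Shifts to second order.} To bound $\norm{u_\xi}{L^\infty}$, $\norm{u_\xi/h_0^{1/2}}{L^2}$ and $\norm{u_\xi/h_0}{L^2}$ by $\norm{u_{\xi\xi}}{L^2}$ I would apply \eqref{ineq:Hardy-4} with $p=2$ to $g=u_\xi$ (which vanishes at $\partial I$), together with the elementary pointwise bound $|u_\xi(s)|^2\le s\int_0^s|u_{\xi\xi}(s')|^2\,\mathrm{d}s'$ for the $h_0^{-1/2}$-weight; to bound $\norm{u_{\xi\xi}}{L^2}$, $\norm{u_{\xi t}}{L^2}$, $\norm{\eta_{\xi\xi}}{L^2}$ by the corresponding $h_0$-weighted third- and second-order norms, and to bound $\norm{h_0^{1/2}\partial_\xi^3\eta}{L^2}$ by $\norm{h_0^{3/2}\partial_\xi^3\eta}{L^2}+\norm{h_0^{3/2}\partial_\xi^4\eta}{L^2}$, I would apply \eqref{ineq:Hardy-3} with $p=2$ and $k=1$ (for $g=u_{\xi\xi},u_{\xi t},\eta_{\xi\xi}$) or $k=\tfrac32$ (for $g=\partial_\xi^3\eta$), the condition $k+\tfrac1p>1$ holding in each case. \emph{(ii) Weighted $L^\infty$ bounds.} For $\norm{h_0 u_{\xi\xi}}{L^\infty}$, $\norm{h_0^{1/2}\eta_{\xi\xi}}{L^\infty}$, and the auxiliary $\norm{h_0^{3/2}\partial_\xi^3\eta}{L^\infty}$, I would use $\norm{w\phi}{L^\infty}\lesssim\norm{w\phi}{L^1}+\norm{(w\phi)'}{L^1}$, expand $(w\phi)'=w'\phi+w\phi'$, and estimate by Cauchy--Schwarz; the one point of care is that when $w=h_0^{1/2}$ or $h_0^{3/2}$ the derivative $w'$ carries a factor $\simeq s^{-1/2}$, which is integrable only when paired with an $h_0^{1/2}$-weighted $L^2$ factor, so that e.g. for $\phi=h_0|\eta_{\xi\xi}|^2$ one obtains $\norm{h_0|\eta_{\xi\xi}|^2}{L^\infty}\lesssim\norm{\eta_{\xi\xi}}{L^2}^2+\norm{\eta_{\xi\xi}}{L^2}\norm{h_0^{1/2}\partial_\xi^3\eta}{L^2}$, that is $\norm{h_0^{1/2}\eta_{\xi\xi}}{L^\infty}\lesssim\norm{\eta_{\xi\xi}}{L^2}+\norm{h_0^{1/2}\partial_\xi^3\eta}{L^2}$. \emph{(iii) $L^4$ bounds.} For $\norm{h_0^{1/2}u_{\xi t}}{L^4}$, $\norm{\eta_{\xi\xi}}{L^4}$ and $\norm{h_0\partial_\xi^3\eta}{L^4}$ I would use the interpolation $\int h_0^{2a}|f|^4\idx\le\norm{h_0^a f}{L^\infty}^2\norm{f}{L^2}^2$ together with the weighted $L^\infty$ bounds of step (ii); for $\norm{\eta_{\xi\xi}}{L^4}$ one first applies \eqref{ineq:Hardy-3} with $p=4,k=1$ to pass from $\eta_{\xi\xi}$ to $h_0\partial_\xi^3\eta$ and $h_0\eta_{\xi\xi}$. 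Translating every right-hand side through $\partial_\xi^j u=\partial_t\partial_\xi^j\eta$ and reading off the summands of \eqref{Phi} then closes all of \eqref{8.2}--\eqref{8.3}.

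The only genuine obstacle is the bookkeeping of fractional powers of $h_0$ in steps (ii) and (iii): every integral carrying a negative power $h_0^{-1/2}$ or $h_0^{-1}$ must come paired with enough positive-power factors to remain convergent near $\partial I$, since $\int_0^\delta s^{-1}\,\mathrm{d}s=\infty$. This is exactly the reason $\Phi$ weights $\partial_\xi^3\eta$ only by $h_0$ (equivalently $h_0^{1/2}$ after one Hardy step) and $\partial_\xi^4\eta$ only by $h_0^{3/2}$, and one must never attempt to bound a quantity such as $\norm{\partial_\xi^3\eta}{L^1}$ by $\norm{h_0^{1/2}\partial_\xi^3\eta}{L^2}$ alone. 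Apart from this the computation is routine and entirely parallel to the proofs of Lemmas~\ref{lm:embedding-1} and~\ref{lm:embedding-2}; the only structural difference is that $h_0$ is no longer the explicit equilibrium $\hs$ but merely satisfies $h_0\simeq d$ and $\partial_{\xi\xi}h_0\le0$ from \eqref{h_0}, the sign condition being used — as in \Cref{sec:linear-elliptic} — only to ensure that the coefficient $|h_0'|^2-2h_0h_0''$ occurring in $\Phi$ is nonnegative, so that its square root is a legitimate weight.
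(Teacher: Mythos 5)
Your proposal is correct and follows essentially the same route as the paper, which proves nothing beyond appealing to the Hardy inequalities of Lemma~\ref{lm:hardy-p} and one-dimensional Sobolev embeddings exactly as in Lemmas~\ref{lm:embedding-1} and~\ref{lm:embedding-2}; your choices of exponents (e.g. \eqref{ineq:Hardy-4} with $k=0$ for $u_\xi/h_0$, \eqref{ineq:Hardy-3} with $k=1$ and $k=\tfrac32$, and the $W^{1,1}\hookrightarrow L^\infty$ bound applied to $h_0|\eta_{\xi\xi}|^2$ before interpolating for the $L^4$ norms) are all consistent with the comparison $h_0\simeq d$ from \eqref{h_0}, the boundary condition \eqref{Local:bc}, and the summands of $\Phi$ in \eqref{Phi}. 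The only difference is that you spell out details the paper leaves implicit, which does not change the approach.
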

Moreover,
\begin{lem}
If $\eta(\xi,t)$ and $\xi$ satisfy \eqref{def:lagrangian-map-general} , then it holds that
	\begin{equation}\label{8.4}
			\norm{\eta_{\xi\xi}}{L^2}+ \norm{h_0\partial_\xi^3\eta}{L^2}\lesssim  t \sup_{0\leq s \leq t} \Phi(t)^{1/2}.
	\end{equation}
\end{lem}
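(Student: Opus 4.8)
The plan is to exploit that the Lagrangian flow map degenerates to the identity at the initial time and then simply integrate in time. Since the Lagrangian coordinates in this section are taken with reference to the \emph{initial} configuration, I would first evaluate \eqref{def:lagrangian-map-general} at $t=0$, where $a(0)=-1$ and $h(\cdot,0)=h_0$ on $I$: this gives $\int_{-1}^{\eta(\xi,0)}h_0(x)\,\mathrm{d}x=\int_{-1}^\xi h_0(x)\,\mathrm{d}x$, and since $h_0>0$ on $I$ the primitive $y\mapsto\int_{-1}^y h_0$ is strictly increasing, so $\eta(\xi,0)=\xi$. In particular $\partial_\xi^2\eta(\cdot,0)=\partial_\xi^3\eta(\cdot,0)=0$.

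Next, using $\dt\eta=u$ from \eqref{Local:fp} and commuting $\dt$ with $\partial_\xi$, one has $\dt\partial_\xi^k\eta=\partial_\xi^k u$ for $k=2,3$, so the fundamental theorem of calculus yields
\begin{equation*}
\eta_{\xi\xi}(\xi,t)=\int_0^t u_{\xi\xi}(\xi,s)\,\mathrm{d}s,\qquad
\partial_\xi^3\eta(\xi,t)=\int_0^t \partial_\xi^3 u(\xi,s)\,\mathrm{d}s.
\end{equation*}
Since $h_0=h_0(\xi)$ does not depend on $t$ it may be pulled inside the second integral, and Minkowski's integral inequality in $L^2(I)$ then gives
\begin{equation*}
\norm{\eta_{\xi\xi}(t)}{L^2}\le\int_0^t\norm{u_{\xi\xi}(s)}{L^2}\,\mathrm{d}s,\qquad
\norm{h_0\partial_\xi^3\eta(t)}{L^2}\le\int_0^t\norm{h_0\partial_\xi^3 u(s)}{L^2}\,\mathrm{d}s.
\end{equation*}

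Finally I would bound the two integrands by the energy functional: both $\norm{u_{\xi\xi}(s)}{L^2}\lesssim\Phi(s)^{1/2}$ and $\norm{h_0\partial_\xi^3 u(s)}{L^2}\lesssim\Phi(s)^{1/2}$ hold by \eqref{8.2} (the latter also being immediate from the $k=1$ term of $\sum_{k=0}^1\|h_0\dt^k\partial_\xi^3\eta\|_{L^2}^2$ in the definition \eqref{Phi} of $\Phi$, since $\dt\partial_\xi^3\eta=\partial_\xi^3 u$). Hence each right-hand side above is at most $\int_0^t\Phi(s)^{1/2}\,\mathrm{d}s\le t\sup_{0\le s\le t}\Phi(s)^{1/2}$, and adding the two estimates proves the claim. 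There is no genuine obstacle here: the only two points that require attention are the identification $\eta(\cdot,0)=\mathrm{id}$, which merely encodes the choice of reference configuration, and the observation that the time-independent weight $h_0$ commutes with the time integral so that the weighted norm is controlled exactly as the unweighted one; everything else is the fundamental theorem of calculus together with the embeddings of Lemma \ref{lm:embedding-3}.
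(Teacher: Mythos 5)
Your proposal is correct and follows essentially the same route as the paper: identify $\eta(\cdot,0)=\mathrm{id}$ from \eqref{def:lagrangian-map-general}, write $\eta_{\xi\xi}$ and $h_0\partial_\xi^3\eta$ as time integrals of $u_{\xi\xi}$ and $h_0\partial_\xi^3 u$, apply Minkowski's inequality, and control the integrands by $\Phi(s)^{1/2}$ via \eqref{8.2} and the definition \eqref{Phi}. No gaps.
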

\begin{proof}
    It follows from \eqref{def:lagrangian-map-general} that $\eta(\xi,0)=\xi$, thus
\begin{equation}
    \eta_{\xi\xi}(\xi,0)=\partial_\xi^3\eta(\xi,0)=0.
\end{equation}
Direct calculation together with Minkowski’s inequality yields
\begin{equation}
    \norm{\eta_{\xi\xi}(\cdot, t)}{L^2}\leq\int_0^t\norm{u_{\xi\xi}(\cdot,s)}{L^2}ds
    \leq t \sup_{0\leq s \leq t} \Phi(s)^{1/2},
\end{equation}
\begin{equation}
    \norm{h_0\partial_\xi^3\eta(\cdot, t)}{L^2}\leq\int_0^t\norm{h_0\partial_\xi^3 u(\cdot,s)}{L^2}ds
    \leq t \sup_{0\leq s \leq t} \Phi(s)^{1/2}.
\end{equation}
\end{proof}

\subsection{ {\it \textbf{A priori}} estimate}
The main aim of this section is to derive the key {\it a priori} bound, i.e., there exists $ T \in (0,\infty) $ such that
\begin{equation}
	\sup_{0\leq t \leq T} \Phi(t) \leq 2 M,
\end{equation}
where $ M := P(\Phi(0)) $
for some polynomial $P$, to be determined later. 

\subsubsection{The {\it \textbf{a priori}} assumption}
Assume that there exists a suitably small $T\in(0,1)$, to be determined, such that 
\begin{equation}
	\sup_{0\leq t \leq T} \Phi(t) \leq M,
\end{equation}
for some $ M \in(0,\infty) $. It follows from \eqref{Local:fp} that
\begin{equation}
	\eta(\xi,t)=\xi+\int_0^t u(\xi,s)\,\mrm{d}s,~~(\xi,t)\in(I\times [0,T]),
\end{equation}
which leads to, for $ t \in (0,T) $, thanks to \eqref{8.2}, that
\begin{equation}\label{eta-ul}
    |\eta_\xi(\xi,t)-1|\leq\int_0^t\|u_\xi(\cdot,s)\|_{L^\infty}ds\leq C_1M^{1/2} T \leq \frac{1}{2},
\end{equation}
provided that $ T $ is small enough. Therefore, without loss of generality, we assume that
\begin{equation}\label{eta_ulb}
	\frac{1}{2}\leq\eta_\xi(\xi,t)\leq \frac{3}{2},~~(\xi,t)\in(I\times [0,T]),
\end{equation}
for the remaining part of this section. 

To simplify the notation, we use $ P = P(\cdot) $ to represent a generic polynomial, which will be determined in the end. 

\subsubsection{Temporal derivative estimates}

{\noindent\bf Basic energy estimate.}
Thanks to the boundary condition \eqref{Local:bc}, taking the $L^2$-inner product of \eqref{Local:eq-re} with $u$ yields
\begin{equation}\label{Local:1}
	\begin{gathered}
		\dfrac{\mathrm{d}}{\mathrm{d}t} \int \biggl\lbrace \frac{1}{2}h_0 u^2 +\frac{h_0^2}{\eta_\xi}+ \frac{(|\partial_\xi h_0|^2-2h_0\partial_\xi^2h_0)}{3\eta_\xi^3} +\frac{h_0^2\eta_{\xi\xi}^2}{\eta_\xi^5} \biggr\rbrace d\xi \\
		+2\int \frac{h_0u_\xi^2}{\eta_\xi^2}d\xi
		= - \int\partial_\xi\biggl(\frac{u|\partial_\xi h_0|^2}{\eta_\xi^4}\biggr)d\xi,
	\end{gathered}
\end{equation}
where 
\begin{equation}\label{Local:11}
\begin{gathered}
    - \int\partial_\xi\biggl(\frac{u|\partial_\xi h_0|^2}{\eta_\xi^4}\biggr)d\xi\lesssim\norm{h_0}{H^3}^2(\norm{u}{H^1}^2+\norm{\eta_{\xi\xi}}{L^2}^2 + 1) \\ 
    \lesssim \Phi(t) + 1.
 \end{gathered}
\end{equation}
Therefore, 
integrating \eqref{Local:1} from 0 to t, together with \eqref{eta_ulb} and \eqref{Local:11}, yields
\begin{equation}\label{Local:111}
	\begin{gathered}
	\norm{h_0^{1/2}u}{L^2}^2+\norm{h_0\eta_\xi}{L^2}^2 + \norm{(|\partial_\xi h_0|^2-2h_0\partial_\xi^2h_0)^{1/2}\eta_\xi}{L^2}^2 +\norm{h_0\eta_{\xi\xi}}{L^2}^2\\
		\lesssim \Phi(0)+ t (\sup_{0\leq s \leq t}\Phi(s) + 1),
	\end{gathered}
\end{equation}
for any $ t \in [0,T] $. 

{\noindent\bf Estimate of $ \partial_t^2 u $.}
After applying $\partial_t^2$ to \eqref{Local:eq-re}, taking the $L^2$-inner product of the resultant with $\partial_t^2u$ yields
\begin{equation}\label{Local:3}
	\begin{gathered}
		\dfrac{\mathrm{d}}{\mathrm{d}t} \int \biggl\lbrace \frac{1}{2}h_0|\partial^2_tu|^2 +\frac{h_0^2 u_{\xi t}^2}{\eta_\xi^3}
		+\frac{2(|\partial_\xi h_0|^2-2h_0\partial_\xi^2h_0)u_{\xi t}^2}{\eta_\xi^5} +\frac{h_0^2|\partial_\xi^2u_t|^2}{\eta_\xi^5} \biggr\rbrace \,\idx\\
        + 2\int \frac{h_0|\partial_t^2u_\xi|^2}{\eta_\xi^2}d\xi\\
		=\int \biggl\lbrace -\frac{3h_0^2u_{\xi t}^2u_\xi}{\eta_\xi^4}
		-\frac{10(|\partial_\xi h_0|^2-2h_0\partial_\xi^2h_0)u_{\xi t}^2u_\xi}{\eta_\xi^6} -\frac{5h_0^2|\partial_\xi^2u_t|^2u_\xi}{\eta_\xi^6} \\
		+ \frac{6h_0^2u_\xi^2\partial^2_tu_\xi}{\eta_\xi^4} + \frac{20(\vert h_0'\vert^2 - 2 h_0 h_0'')u_\xi^2 \partial_t^2 u_\xi}{\eta_\xi^6}
		\\
        +5\partial_{tt}\biggl(\frac{h_0^2\eta_{\xi\xi}^2}{\eta_\xi^6}\biggr)\partial_t^2u_\xi d\xi -\frac{12h_0u_\xi u_{\xi t} \partial^2_tu_\xi}{\eta_\xi^3}
		+\frac{12h_0u_\xi^3\partial^2_tu_\xi}{\eta_\xi^4} \\
     - \bigl\lbrack \dfrac{20 h_0^2 u_{\xi\xi} u_\xi}{\eta_\xi^6} + \dfrac{10 h_0^2 \eta_{\xi\xi} \dt u_{\xi}}{\eta_\xi^6} - \dfrac{60 h_0^2 \eta_{\xi\xi} u_\xi^2 }{\eta_\xi^7} \bigr\rbrack_\xi \dt^2 u_\xi \biggr\rbrace \,\idx
        =: \sum_{i=1}^{9} \int I_{i} \,\idx.
	\end{gathered}
\end{equation}
Thanks to \eqref{eta_ulb}, with the help of \eqref{8.2}, \eqref{8.3} and Young's inequality, one can calculate that, for any $ \varepsilon \in (0,1) $ and $ C_\varepsilon \simeq \frac{1}{\varepsilon} $, 
\begin{align}
& \label{Local:31}
\sum_{i=1}^3\int I_{i}\,\idx \leq C\|u_\xi\|_{L^\infty}\Phi(t)\leq CP(\Phi(t)),
\\
\label{Local:32}
& \begin{aligned} \int I_{4}\,\idx \leq &  \varepsilon\|h_0^{1/2} \partial_t^2u_\xi\|_{L^2}^2+C_\varepsilon\norm{u_\xi}{L^\infty}^2\norm{u_\xi}{L^2}^2 \\ & \leq \varepsilon\|h_0^{1/2} \partial_t^2u_\xi\|_{L^2}^2
    + C_\varepsilon P(\Phi(t)), \end{aligned}
\\
& \begin{aligned} \int I_{5}\,\idx \leq & \varepsilon\| h_0^{1/2}  \partial_t^2u_\xi\|_{L^2}^2+C_\varepsilon \norm{u_\xi}{L^\infty}^2\norm{\frac{u_\xi}{h_0^{1/2}}}{L^2}^2 \\ & \leq \varepsilon\|h_0^{1/2} \partial_t^2u_\xi\|_{L^2}^2
    + C_\varepsilon P(\Phi(t)), \end{aligned}
\\
& \begin{aligned}
    \int I_{6}\,\idx & \leq \varepsilon\|h_0^{1/2} \partial_t^2u_\xi\|_{L^2}^2\\& \quad  +C_\varepsilon (1+\|u_\xi\|_{L^\infty}^8 +\|h_0 u_{\xi\xi} \|_{L^\infty}^8 +\| h_0^{1/2}\eta_{\xi\xi}\|_{L^\infty}^8) \\
    & \qquad \times (\|\eta_{\xi\xi}\|_{L^2}^2
    +\|h_0\partial_\xi^2u_t\|_{L^2}^2
    +\|u_{\xi\xi}\|_{L^2}^2+\|u_{\xi t}\|_{L^2}^2)\\
    & \leq\varepsilon\|h_0^{1/2} u_{\xi t}\|_{L^2}^2+C_\varepsilon P\bigl( \Phi(t)\bigr),
\end{aligned}
\\
& \begin{aligned}
\int I_{7} \,\idx & \leq \varepsilon\| h_0^{1/2} \partial_t^2u_\xi\|_{L^2}^2+C_\varepsilon \norm{u_\xi}{L^\infty}^2\norm{u_{\xi t}}{L^2}^2 \\
& \leq \varepsilon\| h_0^{1/2} \partial_t^2u_\xi\|_{L^2}^2
    + C_\varepsilon P(\Phi(t)),
    \end{aligned}
\\
& \begin{aligned}
\int I_{8}\,\idx & \leq \varepsilon\| h_0^{1/2} \partial_t^2u_\xi\|_{L^2}^2+C_\varepsilon \norm{u_\xi}{L^\infty}^4\norm{u_\xi}{L^2}^2 \\
& \leq \varepsilon\|h_0^{1/2} \partial_t^2u_\xi\|_{L^2}^2
    + C_\varepsilon P(\Phi(t)),
    \end{aligned}
\\
\label{Local:33}
& \begin{aligned}
\int I_{9}\,\idx & \leq \varepsilon\| h_0^{1/2} \partial_t^2u_\xi\|_{L^2}^2 \\
&\quad + C_\varepsilon (1+\|u_\xi\|_{L^\infty}^8 + \|h_0u_{\xi\xi}\|_{L^\infty}^8 + \| h_0^{1/2}\eta_{\xi\xi}\|_{L^\infty}^8 )\\
&\qquad \times (\|h_0\partial_\xi^3 u\|_{L^2}^2
+\|\eta_{\xi\xi}\|_{L^2}^2
+\|h_0\partial_\xi^3\eta\|_{L^2}^2
+\norm{h_0\partial_\xi^3\eta}{L^4}^4\\
& \qquad\quad +\norm{h_0^{1/2} u_{\xi t}}{L^4}^4
+\|u_{\xi\xi}\|_{L^2}^2
+\|h_0\partial_\xi^2u_t\|_{L^2}^2
+\|u_{\xi t}\|_{L^2}^2)\\
&\leq\varepsilon\| h_0^{1/2} u_{\xi t}\|_{L^2}^2+C_\varepsilon P\bigl( \Phi(t)\bigr).
\end{aligned}
\end{align}
Thanks to \eqref{Local:31}--\eqref{Local:33}, 
after choosing $\varepsilon$ small enough, integrating \eqref{Local:3} in $ t $ yields that, for any $ t\in [0,T] $, 
\begin{equation}\label{Local:311}
\begin{gathered}
    \norm{h_0^{1/2}\partial^2_tu}{L^2}^2+\norm{h_0u_{\xi t}}{L^2}^2+\norm{(|h_0'|^2-2h_0 h_0'')^{1/2}u_{\xi t}}{L^2}^2+ \norm{h_0\partial^2_\xi u_t}{L^2}^2\\
    \lesssim \Phi(0) + t P\bigl(\sup_{0\leq s\leq t} \Phi(s)\bigr).
\end{gathered}
\end{equation}
{\noindent\bf Estimate of $ \dt u $.} 
Since 
\begin{equation}
	\dt u(\xi,t)=\dt u(\xi,0)+\int_0^t \partial^2_t u(\xi,s)ds,
\end{equation}
it then follows from Cauchy’s inequality and Fubini’s theorem that, for any $ t \in [0,T) $, 
\begin{equation}\label{Local:211}
	\begin{gathered}
		\norm{h_0^{1/2}u_t}{L^2}^2
		\lesssim \Phi(0)+t\int_0^t\norm{h_0^{1/2}u_{tt}(s)}{L^2}^2ds
		\lesssim \Phi(0)+tP\bigl(\sup_{0\leq s\leq t} \Phi(s)\bigr).
	\end{gathered}
\end{equation}
Similar arguments also imply that, for any $ t \in [0,T] $, 
\begin{equation}\label{Local:2111}
	\begin{gathered}
    \norm{h_0u_{\xi}}{L^2}^2+\norm{(|h_0'|^2-2h_0 h_0'')^{1/2}u_{\xi}}{L^2}^2 +\norm{h_0\partial^2_\xi u}{L^2}^2 \\
		\lesssim \Phi(0)+ t P\bigl(\sup_{0\leq s\leq t} \Phi(s)\bigr).
	\end{gathered}
\end{equation}

{\noindent\bf Summary of temporal derivative estimates.} Combing \eqref{Local:111}, \eqref{Local:311}, \eqref{Local:211}, and \eqref{Local:2111} leads to
\begin{equation}\label{Local:dte}
\begin{gathered}
\sum_{k=0}^2\| h_0^{1/2}		\partial^k_tu(\cdot,t)\|_{L^2}^2
+\sum_{k=0}^2\|h_0\partial_t^k\partial^2_\xi\eta(\cdot,t)\|_{L^2}^2\\
+\sum_{k=0}^2\|h_0\partial_t^k\partial_\xi\eta(\cdot,t)\|_{L^2}^2
+\sum_{k=0}^2\|(|h_0'|^2-2h_0 h_0'')^{1/2}\partial_t^k\partial_\xi\eta(\cdot,t)\|_{L^2}^2\\
\lesssim \Phi(0)+ t P\bigl(\sup_{0\leq s\leq t} \Phi(s)\bigr),
\end{gathered}
\end{equation}
for any $ t \in [0,T] $.
 Moreover, Hardy's inequality \eqref{ineq:Hardy-3} also gives
\begin{equation}\label{Local:1111}
 	\norm{\eta_\xi}{L^2}^2+\norm{u_\xi }{L^2}^2+\norm{u_{\xi t}}{L^2}^2
 	\lesssim \Phi(0)+t P\bigl(\sup_{0\leq s\leq t} \Phi(s)\bigr).
\end{equation}

\subsubsection{Elliptic estimates}

We now turn to the elliptic estimates. First, notice that, 
integrating \eqref{Local:eq-re} from $-1$ to $1$ yields
\begin{equation}
    \int_{-1}^1 h_0 \dt u \,\idx + (\vert h_0'\vert^2)\big\vert_{\xi=-1}^{1} = 0,
\end{equation}
and therefore
\begin{equation}\label{Local:balance of momentum}
    \int_{-1}^1 h_0\partial^2_t u d\xi=0,
\end{equation}
thanks to \eqref{h_0} and \eqref{Local:bc}.

{\par\noindent\bf Estimate of $ \partial_\xi^3 u $.}
After applying $\partial_t$ to \eqref{Local:eq-re}, one has
\begin{equation} \label{dtre}
\begin{gathered}
{ h_0\partial^2_t u -2 \partial_\xi \bigl(h_0^2u_\xi\eta_\xi^{-3}\bigr)}
-4\partial_\xi\bigl((|h_0'|^2 -2h_0 h_0'')u_\xi\eta_\xi^{-5}\bigr) \\
{-2 \partial_\xi \bigl(h_0u_{\xi t}\eta_\xi^{-2}\bigr)+4 \partial_\xi \bigl(h_0u_\xi^2\eta_\xi^{-3}\bigr) } \\ 
+ 2\partial^2_\xi\bigl(h_0^2u_{\xi\xi}\eta_\xi^{-5}\bigr)
{-10\partial_\xi^2\bigl(h_0^2\eta_{\xi\xi}u_\xi\eta_\xi^{-6}\bigr)}
{+10\partial_\xi\bigl(h_0^2u_{\xi\xi}\eta_{\xi\xi}\eta_\xi^{-6}\bigr)}\\
{-30\partial_\xi\bigl(h_0^2\eta_{\xi\xi}^2u_\xi\eta_\xi^{-7}\bigr)} = 0 .
\end{gathered}
\end{equation}
Integrating \eqref{dtre} from $y = -1 $ to $ \xi$ and multiplying the resulting equation by $\eta_\xi^5\partial_\xi^3 u$ yields, after a complex but straightforward calculation, that 
\begin{equation} \label{ixdtre}
	\begin{gathered}
	L:= 	2h_0^2\vert\partial_\xi^3u\vert^2+4h_0\partial_\xi h_0u_{\xi\xi}\partial_\xi^3u
        -4(|\partial_\xi h_0|^2-2h_0\partial_\xi^2h_0)u_\xi\partial_\xi^3u\\
		={ -\eta_\xi^5\partial_\xi^3u\int_{-1}^\xi h_0\partial^2_t u(y)dy +2 h_0^2u_\xi\eta_\xi^{2}\partial_\xi^3u}
        {+2h_0u_{\xi t}\eta_\xi^{3} \partial_\xi^3u
		-4h_0  u_\xi^2\eta_\xi^{2}\partial_\xi^3u}\\ 
		{+ 10h_0^2\partial_\xi^3u u_{\xi\xi}\eta_{\xi\xi}\eta_\xi^{-1}
		-30h_0^2\partial_\xi^3u\eta_{\xi\xi}^2u_\xi\eta_\xi^{-2}}
		{+10\partial_\xi^3u\partial_\xi\bigl(h_0^2\eta_{\xi\xi}\bigr)u_\xi\eta_\xi^{-1}} =:R.
	\end{gathered}
\end{equation}
Since $h_0$ is concave after integration by parts one gets
\begin{equation} \label{ixdtre-L}
	\begin{aligned}
        \int L(\xi,t) \,\idx=&2\norm{h_0\partial_\xi^3u}{L^2}^2
        +\int 2h_0\partial_\xi h_0\partial_\xi(u_{\xi\xi}^2)d\xi\\
        &+\int 4(|\partial_\xi h_0|^2
        -2h_0\partial_\xi^2h_0)u_{\xi\xi}^2d\xi
        -\int 4h_0\partial_\xi^3 h_0 \partial_\xi (u_\xi^2) d\xi\\
        =&2\norm{h_0\partial_\xi^3u}{L^2}^2
        +2\norm{\partial_\xi h_0 u_{\xi\xi}}{L^2}^2\\
        &-\int 10h_0\partial_\xi^2h_0 u_{\xi\xi}^2 d\xi
        +\int \partial_\xi(h_0\partial_\xi^3 h_0)u_\xi^2 d\xi\\
        \geq &2\norm{h_0\partial_\xi^3u}{L^2}^2
        +2\norm{\partial_\xi h_0 u_{\xi\xi}}{L^2}^2\\
        &-\int 10h_0\partial_\xi^2h_0 u_{\xi\xi}^2 d\xi
        -C\norm{h_0}{H^4}^2\norm{u_\xi}{L^2}^2.
	\end{aligned}
\end{equation}
and
\begin{equation}\label{Local:e31}
    \norm{h_0}{H^4}^2\norm{u_\xi}{L^2}^2\lesssim \Phi(0)+tP\bigl(\sup_{0\leq s\leq t} \Phi(t)^{1/2}\bigr),
\end{equation}
due to \eqref{Local:1111}.

Next, to estimate $ \int R(\xi, t)\,\idx $,  thanks to \eqref{eta_ulb}, \eqref{8.2}, \eqref{8.3}, \eqref{8.4}, \eqref{Local:1111} and \eqref{Local:eq}, \eqref{lnest:017-1} we have
\begin{align}
& \label{Local:e32}
\begin{aligned}
    \norm{h_0^{-1}\eta_x^5\int_{-1}^\xi h_0\partial^2_t u(y)dy}{L^2}^2\lesssim &\norm{\eta_x}{L^\infty}^5\norm{h_0^{1/2}\partial^2_t u}{L^2}^2\\
    \lesssim &P\bigl(\Phi(0)\bigr)+tP\bigl(\sup_{0\leq s\leq t} \Phi(t)^{1/2}\bigr),
\end{aligned}\\
&
\begin{aligned}
    \norm{h_0u_\xi\eta_\xi^2+u_{\xi t}\eta_\xi^3}{L^2}^2
    \lesssim &\norm{h_0 u_\xi}{L^2}^2\norm{\eta_\xi}{L^\infty}^4+\norm{u_{\xi t}}{L^2}^2\norm{\eta_\xi}{L^\infty}^6\\
    \lesssim &P\bigl(\Phi(0)\bigr)+tP\bigl(\sup_{0\leq s\leq t} \Phi(t)^{1/2}\bigr),
\end{aligned}\\
&
\label{Local:e34}
\begin{aligned}
    \norm{u_\xi^2\eta_\xi^2}{L^2}^2
    \lesssim &\norm{\eta_\xi}{L^\infty}^4\norm{u_\xi}{L^2}^2\norm{u_\xi}{L^\infty}^2
    \lesssim \norm{u_\xi}{L^2}^3\norm{u_{\xi\xi}}{L^2}\\
    \lesssim &\norm{u_\xi}{L^2}^3(\norm{h_0u_{\xi\xi}}{L^2}+\norm{h_0\partial^3_\xi u}{L^2})\\
    \leq &\varepsilon\norm{h_0\partial_\xi^3 u}{L^2}^2+P\bigl(\Phi(0)\bigr)+tP\bigl(\sup_{0\leq s\leq t} \Phi(t)^{1/2}\bigr),
\end{aligned}
\end{align}
where we have used the simple fact that
\begin{equation}
	u_\xi^2(\xi)=2\int_{-1}^\xi u_\xi(y) u_{\xi\xi}(y)dy
	\lesssim \norm{u_\xi}{L^2}\norm{u_{\xi\xi}}{L^2},
\end{equation}
due to boundary condition \eqref{Local:bc}.
\begin{align}
&
\label{Local:e35}
\begin{aligned}
    \norm{h_0u_{\xi\xi}\eta_{\xi\xi}\eta_\xi^{-1}
        +h_0\eta_{\xi\xi}^2u_\xi\eta_\xi^{-2}}{L^2}^2
    \lesssim &\norm{h_0(u_{\xi\xi}+\eta_{\xi\xi})}{L^\infty}^2\norm{\eta_{\xi\xi}}{L^2}^2\\
    \lesssim &tP\bigl(\sup_{0\leq s\leq t} \Phi(t)^{1/2}\bigr),
\end{aligned}\\
&
\label{Local:e36}
\begin{aligned}
 \norm{h_0^{-1}\partial_\xi\bigl(h_0^2\eta_{\xi\xi}\bigr)u_\xi\eta_\xi^{-1}}{L^2}^2
    & \lesssim (1+\norm{h_0}{H^2}^2)\norm{u_\xi}{L^\infty}^2(\norm{\eta_{\xi\xi}}{L^2}^2+\norm{h_0\partial_\xi^3\eta}{L^2}^2)\\
    & \lesssim tP\bigl(\sup_{0\leq s\leq t} \Phi(t)^{1/2}\bigr).
\end{aligned}
\end{align}
Then choosing $\varepsilon$ small enough, it follows from \eqref{Local:e31}-\eqref{Local:e34}, \eqref{Local:e35}, \eqref{Local:e36} and Young's inequality that 
\begin{equation}\label{ixdtre-R}
    \int R \,\idx \lesssim \norm{h_0\partial_\xi^3u}{L^2}^2
    +P\bigl(\Phi(0)\bigr)+tP\bigl(\sup_{0\leq s\leq t} \Phi(t)^{1/2}\bigr).
\end{equation}
Therefore \eqref{ixdtre-R} together with \eqref{ixdtre-L} and \eqref{Local:e31} yields
\begin{equation} \label{Local:e3s}
	\begin{gathered}
		\norm{h_0\partial_\xi^3 u}{L^2}^2
		\lesssim P\bigl(\Phi(0)\bigr)+tP\bigl(\sup_{0\leq s\leq t} \Phi(t)^{1/2}\bigr),
	\end{gathered}
\end{equation}
which together with \eqref{Local:2111} and Hardy's inequality \eqref{ineq:Hardy-3} gives
\begin{equation} \label{Local:e3s1}
	\begin{gathered}
		\norm{u_{\xi\xi}}{L^2}^2\lesssim \norm{h_0u_{\xi\xi}}{L^2}^2+\norm{h_0\partial^3_\xi u}{L^2}^2
		\lesssim P\bigl(\Phi(0)\bigr)+tP\bigl(\sup_{0\leq s\leq t} \Phi(t)^{1/2}\bigr),
	\end{gathered}
\end{equation}
and due to \eqref{Local:bc} and \eqref{ineq:Hardy-4} one gets
\begin{equation} \label{Local:e3s2}
	\begin{gathered}
		\norm{h_0^{-1}u_{\xi}}{L^2}^2\lesssim \norm{u_{\xi\xi}}{L^2}^2
		\lesssim P\bigl(\Phi(0)\bigr)+tP\bigl(\sup_{0\leq s\leq t} \Phi(t)^{1/2}\bigr).
	\end{gathered}
\end{equation}

\smallskip

{\noindent\bf Estimate of $\partial_\xi^4 \eta $.} Following the estimates in section \ref{sec:ell-est-001}, after multiplying \eqref{Local:eq-re} with $h_0^{-1/2}\eta_\xi^5$ and rearranging the resultant, one has that
{
	\begin{equation}\label{Local:dtrere}
		\begin{gathered}
			2( h_0^{3/2} \eta_{\xi\xi\xi\xi} + 4 h_0^{1/2} h_0' \eta_{\xi\xi\xi}) = 2 \underbrace{\bigl(- h_0^{1/2} h_0' \eta_{\xi}^3 + h_0^{1/2} h_0''' \eta_\xi \bigr)}_{:=J_1} - h_0^{1/2} u_t \eta_\xi^5 \\
			+ 2 h_0^{3/2} \eta_\xi^2 \eta_{\xi\xi} - 12 h_0^{1/2} h_0'' \eta_{\xi\xi} + 2 h_0^{1/2}\eta_{\xi}^3 u_{\xi\xi} + 2 h_0^{-1/2} h_0' \eta_{\xi}^3 u_{\xi}\\
			+ 20 \dfrac{h_0^{3/2} \eta_{\xi\xi}\eta_{\xi\xi\xi}}{\eta_\xi} + 20 \dfrac{h_0^{1/2} h_0' \eta_{\xi\xi}^2}{\eta_\xi} + \dfrac{10 \eta_\xi^5}{h_0^{1/2}} \biggl( \dfrac{h_0^2 \eta_{\xi\xi}}{\eta_\xi^6}\biggr)_\xi \eta_{\xi\xi} \\
			- 5 \eta_{\xi}^5h_0^{3/2} \biggl( \dfrac{\eta_{\xi\xi}^2}{\eta_\xi^6} \biggr)_\xi
			- 4 h_0^{1/2} \eta_{\xi}^2 \eta_{\xi\xi} u_\xi.
		\end{gathered}
	\end{equation}
}
Repeating calculation similar to \eqref{lnest:042}, since $h_0$ is concave, the $ L^2 $-norm of the left hand side of \eqref{Local:dtrere} satisfies 
\begin{equation} \label{Local:421}
	\begin{gathered}
		\norm{h_0^{3/2}\partial_\xi^4 \eta + 4h_0^{1/2}\partial_\xi h_0 \partial_\xi^3\eta}{L^2}^2  \gtrsim \norm{h_0^{3/2}\partial_\xi^4 \eta}{L^2}^2,
	\end{gathered}
\end{equation}
and thanks to \eqref{eta_ulb}, \eqref{8.2}, \eqref{8.3}, \eqref{8.4}, \eqref{Local:2111}, \eqref{Local:e3s1} and \eqref{Local:e3s2} we have that the right hand side of \eqref{Local:dtrere} can be estimated as follows:
\begin{align}
&\norm{J_1}{L^2}^2
\lesssim\norm{h_0}{H^4}^2\norm{\eta_{\xi}}{L^2}^2
\lesssim  P\bigl(\Phi(0)\bigr)+ tP\bigl(\sup_{0\leq s\leq t} \Phi(s)^{1/2}\bigr),
\\
&\norm{h_0^{1/2} u_t\eta_\xi^5}{L^2}^2\lesssim \norm{h_0^{1/2} u_t}{L^2}^2\lesssim P\bigl(\Phi(0)\bigr)+tP\bigl(\sup_{0\leq s\leq t} \Phi(s)^{1/2}\bigr),
\\
&\norm{h_0^{3/2}\eta_{\xi\xi}\eta_\xi^{2}}{L^2}^2
\lesssim \norm{h_0}{H^2}^{3}\norm{\eta_\xi}{H^1}^2
\lesssim P\bigl(\Phi(0)\bigr)+tP\bigl(\sup_{0\leq s\leq t} \Phi(s)^{1/2}\bigr),
\\
&\norm{h_0^{1/2}\partial_\xi^2 h_0\eta_{\xi\xi}}{L^2}^2
\lesssim\norm{h_0}{H^2}^{3}\norm{\eta_{\xi\xi}}{L^2}^2
\lesssim t P\bigl(\sup_{0\leq s\leq t} \Phi(s)^{1/2}\bigr),
\\
&\norm{h_0^{1/2}u_{\xi\xi}\eta_\xi^3}{L^2}^2
\lesssim \norm{h_0}{H^1}\norm{u_{\xi\xi}}{L^2}^2
\lesssim P\bigl(\Phi(0)\bigr)+ tP\bigl(\sup_{0\leq s\leq t} \Phi(s)^{1/2}\bigr),
\\
&\begin{aligned}
\norm{h_0^{-1/2}\partial_\xi h_0u_\xi\eta_\xi^3}{L^2}^2
\lesssim &\norm{h_0}{H^2}^{3}\norm{h_0^{-1}u_\xi}{L^2}^2\\
\lesssim & P\bigl(\Phi(0)\bigr)+tP\bigl(\sup_{0\leq s\leq t} \Phi(s)^{1/2}\bigr),
\end{aligned}\\
&\norm{h_0^{3/2}\eta_{\xi\xi}\eta_{\xi\xi\xi}\eta_\xi^{-1}}{L^2}^2
\lesssim \norm{h_0^{1/2}\eta_{\xi\xi}}{L^\infty}^2
\norm{h_0\partial_\xi^3\eta}{L^2}^2
\lesssim tP\bigl(\sup_{0\leq s\leq t} \Phi(s)^{1/2}\bigr),
\\
&\norm{h_0^{1/2}\partial_\xi h_0\eta_{\xi\xi}^2\eta_\xi^{-1}}{L^2}^2
\lesssim \norm{h_0^{1/2}\eta_{\xi\xi}}{L^\infty}^2
\norm{\eta_{\xi\xi}}{L^2}^2
\lesssim  tP\bigl(\sup_{0\leq s\leq t} \Phi(s)^{1/2}\bigr),
\\
&\begin{aligned}    &\norm{h_0^{-1/2}\eta_\xi^5\eta_{\xi\xi}\partial_\xi\bigl(h_0^2 \eta_{\xi\xi}\eta_\xi^{-6}
\bigr)}{L^2}^2+\norm{h_0^{3/2}\eta_\xi^5\partial_\xi\bigl(\eta_{\xi\xi}^2\eta_\xi^{-6}
\bigr)}{L^2}^2\\ 
\lesssim & (\norm{h_0^{1/2}\eta_{\xi\xi}}{L^\infty}^2+\norm{h_0^{1/2}\eta_{\xi\xi}}{L^\infty}^4)(\norm{\eta_{\xi\xi}}{L^2}^2
+\norm{h_0\partial_\xi^3\eta}{L^2}^2)
\\
\lesssim  & tP\bigl(\sup_{0\leq s\leq t} \Phi(s)^{1/2}\bigr),
\end{aligned}\\
&\begin{aligned}\label{Local:4210}
	    \norm{h_0^{1/2}u_{\xi}\eta_{\xi\xi}\eta_\xi^{2}}{L^2}^2
		\lesssim & \norm{h_0}{H^1}\norm{u_\xi}{L^\infty}\norm{\eta_{\xi\xi}}{L^2}^2\\
		\lesssim & P\bigl(\Phi(0)\bigr)+ tP\bigl(\sup_{0\leq s\leq t} \Phi(s)^{1/2}\bigr),
\end{aligned}
\end{align}
Then it follows from \eqref{Local:dtrere} and \eqref{Local:421}-\eqref{Local:4210} that
\begin{equation} \label{Local:444}
	\begin{gathered}
		\norm{h_0^{3/2}\partial_\xi^4 \eta}{L^2}^2
		\lesssim P\bigl(\Phi(0)\bigr)+tP\bigl(\sup_{0\leq s\leq t} \Phi(s)^{1/2}\bigr).
	\end{gathered}
\end{equation}

\subsubsection{Summary: The local-in-time {\it \textbf{a priori}} estimates}
Now combing \eqref{8.4}, \eqref{Local:dte}, \eqref{Local:e3s} and \eqref{Local:444} together, we finally arrive at
\begin{equation}
	\Phi(t)\leq P\bigl(\Phi(0)\bigr)+CtP\bigl(\sup_{0\leq s\leq t} \Phi(s)^{1/2}\bigr),
\end{equation}
for any $t\in [0,T]$, where $C$ is a positive constant only depends on $\norm{h_0}{H^3}$, and $P$ is a generic polynomial satisfying all the estimates above. Thus for sufficiently small $T\geq 0$, 
\begin{equation}
	\sup_{0\leq t\leq T}\Phi(t)\leq 2P\bigl(\Phi(0)\bigr).
\end{equation}
\subsection{Proof of theorem \ref{thm-local}}
With the above {\it a priori} estimates and initial condition \eqref{u_0}, \eqref{h_0}, suppose the initial functional energy
$\Phi(0)<\infty$, one can apply standard Galerkin's method to construct a strong solution $(\eta,u)$ satisfying \eqref{8.6} and \eqref{8.7}, see \cite{li_well-posedness_2022} for further details.

\subsection{Uniqueness}

Let $(u,\eta)$ and $(v,\zeta)$ be two solutions to the system on $[0, T]$ with initial data $(h_0, u_0)$ satisfying the same estimate. Their corresponding relationships are:
\begin{equation}
	(\eta,\zeta)(\xi,t)=\xi+\int_0^t (u,v)(\xi,s)\,\mrm{d}s.
\end{equation}
Let 
\begin{equation}
	w=u-v,~~\chi=\eta-\zeta.
\end{equation}
Then $(w,\chi)$ satisfies
\begin{equation} \label{L-U}
	\begin{gathered}
		h_0\dt w + \partial_\xi \biggl( \frac{h_0^2(\eta_\xi+\zeta_\xi)\chi_\xi}{\eta_\xi^2\zeta_\xi^2}\biggr)-2 \partial_\xi \biggl(\frac{h_0w_\xi}{\eta_\xi^2}+ \frac{h_0v_\xi(\eta_\xi+\zeta_\xi)\chi_\xi}{\eta_\xi^2\zeta_\xi^2} \biggr) \\ 
		=-\partial_\xi\biggl(\partial_\xi\biggl(\frac{2h_0^2\chi_{\xi\xi}}{\eta_\xi^5}+\frac{2h_0^2\zeta_{\xi\xi}P_1(\eta_\xi,\zeta_\xi)\chi_\xi}{\eta_\xi^5\zeta_\xi^5}\biggr)\biggr)\\
		-\partial_\xi\biggl(
		\frac{5h_0^2(\eta_{\xi\xi}+\zeta_{\xi\xi})\chi_{\xi\xi}}{\eta_\xi^6}
		+\frac{5h_0^2\zeta_{\xi\xi}^2P_2(\eta_\xi,\zeta_\xi)\chi_\xi}{\eta_\xi^6\zeta_\xi^6}\biggr)\\
		-\partial_\xi\biggl((|\partial_\xi h_0|^2-2h_0\partial_\xi^2h_0)\frac{(\eta_\xi+\zeta_\xi)(\eta_\xi^2+\zeta_\xi^2)\chi_\xi }{\eta_\xi^4\zeta_\xi^4}\biggr),
	\end{gathered}
\end{equation}
with initial data
\begin{equation}
	(w,\chi)(\xi,0)=(0,0),
\end{equation}
and boundary condition
\begin{equation}
	(w_\xi,\chi_\xi)(-1,t)= (w_\xi,\chi_\xi)(1,t)=(0,0),
\end{equation}
where polynomial 
\begin{gather*}
	P_1(\eta_\xi,\zeta_\xi)=\eta_\xi^4+\eta_\xi^3\zeta_\xi+\eta_\xi^2\zeta_\xi^2+\eta_\xi\zeta_\xi^3+\zeta_\xi^4, \\
\intertext{and}
	P_2(\eta_\xi,\zeta_\xi)=(\eta_\xi^3+\zeta_\xi^3)(\eta_\xi^2+\eta_\xi\zeta_\xi+\zeta_\xi^2).
\end{gather*}

Define
\begin{equation}
	\begin{gathered}
		\Phi_0(w,\chi,t)=\norm{\sqrt h_0 
			w(\cdot,t)}{L^2}^2+\norm{ h_0 
			\chi_\xi(\cdot,t)}{L^2}^2+\norm{ h_0 
			\chi_{\xi\xi}(\cdot,t)}{L^2}^2\\
		\norm{(|\partial_\xi h_0|^2-2h_0\partial_\xi^2h_0)^{1/2}\chi_\xi(\cdot,t)}{L^2}^2.    
	\end{gathered}
\end{equation}

Multiplying \eqref{L-U} both sides by $w$, integrating the resulting equation on $(-1,1)\times [0,T]$, using $\chi_t=w$ after integration by parts we finally find that
\begin{equation}
	\begin{gathered}
		\sup_{0\leq s\leq t}\Phi_0(w,\chi,s)\leq Ct\sup_{0\leq s\leq t}\Phi_0(w,\chi,s)+Ct\sup_{0\leq s\leq t}\int_\Omega h_0\chi_\xi^2(s)\,\idx\\
		\leq Ct\sup_{0\leq s\leq t}\Phi_0(w,\chi,s),
	\end{gathered}
\end{equation}
for all $t\in [0,T]$, where we have used Hardy’s inequality and C depends on $\Phi(u,\eta,t)$ and $\Phi(v,\zeta,t)$. Finally $w=0$ thus $\chi=0$ follows from \eqref{h_0} and the fact $(u,v)(\xi, t)\in C([-1,1]\times[0,T])$.

\section*{Acknowledgments}
DP was supported by the German Research Foundation (DFG) through the project 422792530 within the Priority Program SPP 2171. JL was supported in part
by Zheng Ge Ru Foundation, Hong Kong RGC Earmarked Research Grants CUHK-14301421, CUHK-14300819,
CUHK-14302819, CUHK-14300917, the key project of NSFC (Grant No. 12131010)  the Shun Hing Education and Charity Fund.

\bibliographystyle{plain}
\bibliography{short}

\end{document}